\documentclass[11pt]{article}
\usepackage[T1]{fontenc}
\usepackage{lmodern}
\usepackage{microtype}
\usepackage[a4paper,margin=1in]{geometry}
\usepackage{amsmath,amssymb,amsthm,mathtools}
\usepackage{aliascnt}
\usepackage{booktabs,array,enumitem,needspace}
\usepackage[hidelinks]{hyperref}
\usepackage[nameinlink,noabbrev,capitalise]{cleveref}
\hypersetup{pdftitle={When Do Restricted Sumsets Cover a Finite Abelian Group?},
 pdfsubject={Restricted sumset covering in finite abelian groups}}
\newcommand{\proofcase}[1]{\par\smallskip\Needspace{3\baselineskip}\noindent\textit{#1.}\ }

\newtheorem{theorem}{Theorem}[section]
\newaliascnt{lemma}{theorem}
\newtheorem{lemma}[lemma]{Lemma}
\aliascntresetthe{lemma}
\newaliascnt{proposition}{theorem}
\newtheorem{proposition}[proposition]{Proposition}
\aliascntresetthe{proposition}
\newaliascnt{corollary}{theorem}
\newtheorem{corollary}[corollary]{Corollary}
\aliascntresetthe{corollary}
\theoremstyle{remark}
\newaliascnt{remark}{theorem}
\newtheorem{remark}[remark]{Remark}
\aliascntresetthe{remark}

\crefname{theorem}{Theorem}{Theorems}
\crefname{lemma}{Lemma}{Lemmas}
\crefname{proposition}{Proposition}{Propositions}
\crefname{corollary}{Corollary}{Corollaries}
\crefname{remark}{Remark}{Remarks}
\crefname{section}{Section}{Sections}
\Crefname{theorem}{Theorem}{Theorems}
\Crefname{lemma}{Lemma}{Lemmas}
\Crefname{proposition}{Proposition}{Propositions}
\Crefname{corollary}{Corollary}{Corollaries}
\Crefname{remark}{Remark}{Remarks}
\Crefname{section}{Section}{Sections}

\newcommand{\Stab}{\operatorname{Stab}}
\newcommand{\supp}{\operatorname{supp}}
\newcommand{\eps}{\varepsilon}
\newcommand{\ceil}[1]{\left\lceil #1\right\rceil}
\newcommand{\floor}[1]{\left\lfloor #1\right\rfloor}
\newcommand{\F}{\mathbb F}
\newcommand{\Z}{\mathbb Z}

\title{When Do Restricted Sumsets Cover a Finite Abelian Group?}
\author{Dongchun Han\thanks{School of Mathematics, Southwest Jiaotong University, Chengdu 610031, P.R. China.}
\and Yuan Ren\thanks{School of Mathematical Sciences, University of Electronic Science and Technology of China, Chengdu 611731, P.R. China. Corresponding author: \texttt{rytgyx@126.com}.}}
\date{}

\begin{document}
\maketitle
\vspace{-2.2em}

\begin{abstract}
For a subset $A$ of a finite abelian group $G$, let $\Sigma_k(A)$ denote the set of sums of $k$ distinct elements of $A$, and let $\Sigma(A)=\bigcup_{k\ge1}\Sigma_k(A)$. The problem of determining when $\Sigma(A)=G$ goes back to Erd\H{o}s and Heilbronn. Nguyen and Vu later formulated the corresponding fixed-length question: for a prescribed $k$, when does $\Sigma_k(A)=G$? For groups of odd order, we prove the sharp large-order threshold $|A|>|G|/3+1$, which forces $\Sigma_k(A)=G$ throughout the entire middle range $4\le k\le |A|-4$. This determines the limiting thresholds posed by Girard, Griffiths and Hamidoune. If $p$ is the least prime divisor of $|G|$, then, for fixed odd $p$ and sufficiently large $|G|$, the exact threshold $|A|\ge |G|/p+p-1$ forces covering throughout the middle range beginning at $2p-2$. Thus, throughout this range, prescribing the number of summands does not change the sharp covering threshold. For $3\le k\le2p-3$, the restricted and ordinary $k$-fold covering problems have the same asymptotic threshold. As an application to MDS elliptic codes on an elliptic curve $E/\F_q$, we sharpen the half-scale conjecture of Li, Wan and Zhang, obtaining the sharp $1/3$-scale bound when $|E(\F_q)|$ is odd and, more generally, the exact bound determined by the least prime divisor of $|E(\F_q)|$.
\end{abstract}

\medskip
\noindent\textbf{Keywords.} Restricted sumsets, finite abelian groups, MDS elliptic codes.

\smallskip
\noindent\textbf{2020 Mathematics Subject Classification.} 11B13, 11P70, 20K01, 94B27.

\section{Introduction}\label{sec:introduction}

Throughout the paper $G$ is a finite abelian group of order $N$, written additively. For $A\subset G$ and $0\le k\le |A|$, put $\Sigma_k(A)=\{a_1+\cdots+a_k:a_1,\ldots,a_k\in A\text{ pairwise distinct}\}$, and write $\Sigma(A)=\bigcup_{k\ge1}\Sigma_k(A)$. The problem of determining when $\Sigma(A)=G$ goes back to Erd\H{o}s and Heilbronn. Nguyen and Vu later formulated two closely related versions of the problem in a unified way \cite[Questions~1.1 and~1.2]{NguyenVu}:
\[
\text{When does }\Sigma(A)=G?
\]
and, for a prescribed integer $k$,
\[
\text{When does }\Sigma_k(A)=G?
\]

The first question is classical. Following work of Erd\H{o}s and Heilbronn, Diderrich and Mann, Gao and Hamidoune, and others \cite{DiderrichMann,ErdosHeilbronn,GaoHamidoune}, the sharp threshold was finally determined for all finite abelian groups by Freeze, Gao and Geroldinger \cite{FreezeGaoGeroldinger,FreezeGaoGeroldingerCorrigendum}. If $p$ is the least prime divisor of $N$, then, for fixed $p$ and sufficiently large $N$, the sharp threshold is $|A|\ge N/p+p-1$. This leads naturally to the fixed-length problem: for a prescribed $k$, how large must $A$ be to ensure $\Sigma_k(A)=G$?

The fixed-length problem is a natural question in restricted addition. The study of lower bounds for restricted sumsets goes back to the Erd\H{o}s--Heilbronn problem \cite{ErdosHeilbronn}. Dias da Silva and Hamidoune proved the sharp prime-cyclic estimate
\[
|\Sigma_k(A)|\ge \min\{p,k(|A|-k)+1\},
\]
settling the general Erd\H{o}s--Heilbronn conjecture \cite{DiasHamidoune}; see also \cite[Theorem~3.3]{ANR} for this formulation. Alon, Nathanson and Ruzsa later gave a polynomial-method proof and further extensions \cite[Theorems~3.2 and~3.3]{ANR}. For general finite abelian groups, Lev developed a systematic theory of restricted addition \cite[Theorem~3]{LevI}, while Hamidoune, Llad\'o and Serra obtained further expansion and structural results \cite[p.~513]{HLS}.

For fixed-length covering, several results are known in specific cases. Gallardo, Grekos, Habsieger, Hennecart, Landreau and Plagne studied two- and three-fold restricted sums in $\Z/n\Z$. In particular, they proved three-fold covering under the condition $|A|>n/2$ and conjectured that, for odd $n$, this could be lowered to $2n/5+O(1)$ \cite[Theorem~3.1 and Conjecture~3.9]{GallardoEtAl}. Lev subsequently resolved the three-fold covering conjecture for arbitrary finite abelian groups \cite[Main Theorem]{LevIII}.

For groups of even order, Roth and Lempel showed that, apart from the exceptional endpoint cases, $|A|>|G|/2$ implies $\Sigma_k(A)=G$ throughout the middle range \cite[Theorem~3.1 and Remark~3.1]{RothLempel}.

The odd-order problem remained much less clear. Girard, Griffiths and Hamidoune observed that, in an odd-order group, $|A|\ge (N+3)/2$ implies $\Sigma_2(A)=G$, and they expected the same hypothesis to imply $\Sigma_k(A)=G$ for every $2\le k\le |A|-2$ in every sufficiently large odd-order group \cite[Section~6]{GGH}. More generally, they introduced
\[
c_k(n)=\inf\{c: |A|>c|G|\Longrightarrow \Sigma_k(A)=G\text{ for every }A\subset G\text{ with }|G|=n\}
\]
and
\[
c_k=\limsup_{m\to\infty}c_k(2m+1),
\]
and asked for the value of $c_k$ for each fixed $k$ \cite[Problem~6.1]{GGH}.

Tang and Wei proved that $|A|>0.4045|G|$ implies $\Sigma_4(A)=G$ for sufficiently large odd cyclic groups \cite[Theorem~1.3]{TangWei}, while Goswami and Mistri obtained thresholds tending to $1/3$ as the restricted length grows \cite[Theorem~1.8]{GoswamiMistri}. More recently, Chen and Huang proved covering results for odd-order finite abelian groups in the range $3\le k\le N/p-2$, where $p$ is the least prime divisor of $N$, with set-size bounds approximately $2N/5$ or $5N/13$ \cite[Theorem~B]{ChenHuang}. Their results leave open both the sharp threshold and the remaining middle lengths.

\begin{theorem}\label{thm:sharp-density}
Let $G$ be a finite abelian group of odd order $N>605$, and let $A\subset G$. If $|A|>N/3+1$, then
\[
\Sigma_k(A)=G\qquad (4\le k\le |A|-4).
\]
\end{theorem}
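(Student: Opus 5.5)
First, the symmetry $\Sigma_{|A|-k}(A)=\sigma(A)-\Sigma_k(A)$, where $\sigma(A)$ is the sum of all elements of $A$, reduces the claim to $4\le k\le |A|/2$. The threshold $|A|>N/3+1$ is sharp because of the standard extremal example: if $H$ has index $3$ and $A$ is a coset of $H$ together with one more element, then $\Sigma_k(A)$ misses a coset. So the proof has to separate "periodic" behaviour coming from index-$3$ subgroups from generic expansion. The tools I would use are Kneser's theorem, Lev's restricted-addition results for general abelian groups \cite{LevI}, and the Hamidoune--Llad\'o--Serra structure results \cite{HLS}.

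\textbf{Step 1 (building long sums from short ones).} Partition $A$ into disjoint pieces $A=A_1\sqcup\cdots\sqcup A_r\sqcup R$ and use $\Sigma_{k_1}(A_1)+\cdots+\Sigma_{k_r}(A_r)\subset\Sigma_k(A)$ whenever $\sum k_i=k$. With a two-piece split $A=B\sqcup C$ where $|B|,|C|\approx|A|/2>N/6$, lower bounds for $|\Sigma_2(B)|$ and $|\Sigma_{k-2}(C)|$ (from \cite{LevI}) combined with Kneser's theorem give
\[
|\Sigma_2(B)+\Sigma_{k-2}(C)|\ge \min\{N,\ |\Sigma_2(B)|+|\Sigma_{k-2}(C)|-|H|\},
\]
where $H$ is the stabilizer. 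If the sumset is aperiodic, the sizes are large enough once $k\ge4$: for instance $|\Sigma_2(B)|\ge 2|B|-3$ roughly, and $\Sigma_{k-2}(C)$ is at least as large. For larger $k$ the pieces grow further, so a uniform split such as $k=2+2+\dots$ or $k=2+(k-2)$ should work. The constant $N>605$ is meant to absorb the additive losses in these estimates.

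\textbf{Step 2 (periodic case).} Let $H$ be a proper nontrivial stabilizer. Since $N$ is odd, $[G:H]\ge3$. Project to $G/H$ and count how $A$ distributes over $H$-cosets. If $[G:H]\ge5$, then $|A|>N/3$ forces many cosets to be heavily occupied (more than $|H|/3$), and induction on $|G|$ applies: either use the theorem for the smaller group $H$ when it is large, or use the Freeze--Gao--Geroldinger $\Sigma$-covering in the quotient, now tracking the number of summands. If $[G:H]=3$, then $|A|>N/3+1$ gives at least two elements beyond a single coset. I would then argue directly: pick elements $a,a'$ outside the densest coset $x+H$, and note that $\Sigma_{k}(A)$ contains $\Sigma_{k-1}((A\cap(x+H)))+a$ as well as sums using two outside elements or none. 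Since $A$ meets each coset, and $k\ge4$ leaves room to vary the number of outside elements by $0$, $1$, or $2$, all three cosets are reached. Within each coset the restricted sums fill it, because the density inside is above $1/3+$, again by induction or by the $H$-level estimate.

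\textbf{Main obstacle.} The hardest part is getting a uniform restricted Kneser-type lower bound that is strong enough at the small end, $k=4,5$. There $\Sigma_2$ of a set of density about $1/6$ must interact with $\Sigma_2$ of another such set to exceed $N$, which needs near-Vosper/Freiman-type structure for $\Sigma_2$ (in the spirit of Lev's three-fold covering theorem \cite{LevIII}) rather than bare Cauchy--Davenport. My first attempt would be to use $\Sigma_4(A)\supset\Sigma_2(B)+\Sigma_2(C)$ with a balanced split chosen to avoid arithmetic-progression structure. If that fails, I would fall back on the classification of the case $|\Sigma_2(B)|<2.4|B|$ from \cite{HLS} and treat that structured case by hand.
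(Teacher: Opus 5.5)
\textbf{The proposal has a genuine gap: it gives no argument for the endpoint lengths $k=4,5$, and the splitting strategy of Step~1 cannot supply one.} With $|B|,|C|\approx|A|/2\approx N/6$, even the prime-field bound $|\Sigma_2(X)|\ge 2|X|-3$ gives only $|\Sigma_2(B)|+|\Sigma_2(C)|\approx 2|A|\approx 2N/3<N$. This loss really occurs: if $A$ is an interval of length $m\approx N/3$ in $C_N$ and $B,C$ are its two halves, then $\Sigma_2(B)+\Sigma_2(C)$ is an interval of length $2m-7$.

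In a general odd-order group the bound $2|B|-3$ is not even true: a coset $B$ of a subgroup has $|\Sigma_2(B)|=|B|$. What is available is Lev's golden-ratio bound and the Hamidoune--Llad\'o--Serra bound $\frac{3}{2}|B|$. Your claim that $\Sigma_{k-2}(C)$ is at least as large as $\Sigma_2(B)$ is unsupported. ``Choose a split avoiding progression structure'' is a hope, not a step.

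The paper does not split $A$ at $k=4$. For $p(G)\ge7$ it translates a missing sum to $0$. A missing restricted four-sum then forces $r(x)\ge6\Rightarrow r(-x)\le1$ for the representation function $r$ of the translated set. Hence $\sum_x\min\{21,r(x)\}\le 11N-8$, which contradicts Grynkiewicz's popular-sum bound $42|A|-881$ (or $42|A|-21|L|$ with $|L|\le N/7$). For $p(G)\in\{3,5\}$ it works with the fibres $A_i$ over an index-$p$ subgroup $H$ and applies Kneser to three-term sets such as $\Sigma_2(A_i)+A_j+A_k$. The $\frac{3}{2}$ doubling bound contributes $a_i/2$ beyond $|A|$, pushing the total past $7h/6$. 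So the stabilizer has index $3$ or $5$ in $H$, and counting occupied cosets excludes both.

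\textbf{Step~2 also fails where it matters.}

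For $[G:H]=3$ you assume the densest coset has relative density above $1/3$, so that the theorem applies inside it by induction. But $|A|\ge|H|+2$ can be spread evenly: for example $(|H|+2)/3<|H|/3+1$ elements per coset when $|H|\equiv1\pmod 3$. The same even spreading defeats the induction for $[G:H]\ge5$.

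Even with a dense fibre, at $k=4,5$ your bookkeeping needs $\Sigma_2$ or $\Sigma_3$ inside a single fibre. There the thresholds are $1/2$ and $2/5$, not $1/3$; this is why the paper's large-fibre case requires a fibre of size $>h/2$. The induction also has no base once $|H|\le605$. The balanced index-three configuration is exactly what the paper handles with the Kneser/HLS argument above and, at $k=5$, with the three-fibre lemma (Diderrich's bound plus the Girard--Griffiths--Hamidoune equality classification).

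For larger indices, Freeze--Gao--Geroldinger controls $\Sigma(A)$, not a prescribed length. Picking $\ell$ \emph{distinct} elements with a prescribed quotient class is precisely the paper's round-robin quotient-selection lemma. It must be combined with an absorber that fills a whole coset, for example $\Sigma_2(T_i)+\Sigma_2(T_j)$ via the mixed doubling lemma.

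Finally, ``for larger $k$ the pieces grow'' does not cover the long lengths. The paper pads the endpoints with a matching of equal-sum pairs up to $k\le64$. It then uses Dias da Silva--Hamidoune for prime $N$, and quotient selection with absorbers otherwise.
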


Theorem~\ref{thm:sharp-density} strengthens the odd-order covering expectation of Girard, Griffiths and Hamidoune: their half-scale hypothesis can be replaced by the sharp bound $|A|>N/3+1$ for $4\le k\le |A|-4$, and the index-three construction below shows that this threshold is best possible. Together with the known two-fold result and Lev's three-fold theorem \cite[Main Theorem]{LevIII}, this confirms their sufficiently-large odd-order covering expectation in full. The limiting thresholds at lengths two and three are $1/2$ and $2/5$, respectively. Thus these results determine all the limiting thresholds posed by Girard, Griffiths and Hamidoune.

\begin{corollary}\label{cor:critical-densities}
For finite abelian groups of odd order,
\[
c_2=\frac12,\qquad c_3=\frac25,\qquad c_k=\frac13\quad(k\ge4).
\]
\end{corollary}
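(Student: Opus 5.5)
The proof proceeds by establishing each value as an upper bound and a matching lower bound, working with odd $n=2m+1\to\infty$ throughout. The upper bounds come from results already available. For $k\ge4$, \cref{thm:sharp-density} gives: if $N>605$ and $|A|>N/3+1$, then $\Sigma_k(A)=G$ for $4\le k\le|A|-4$. For fixed $k$ and large $N$, the condition $|A|>cN$ with $c>1/3$ implies $|A|>N/3+1$ and also $k\le |A|-4$. Hence $c_k(n)\le 1/3+o(1)$, and so $c_k\le1/3$. For $k=2$, the Girard--Griffiths--Hamidoune observation that $|A|\ge (N+3)/2$ forces $\Sigma_2(A)=G$ gives $c_2(n)\le 1/2+O(1/n)$. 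For $k=3$, Lev's three-fold theorem \cite[Main Theorem]{LevIII} gives $|A|\ge 2N/5+O(1)$, so $c_3\le 2/5$.

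The lower bounds need explicit non-covering sets in infinitely many odd-order groups. For $c_k\ge 1/3$ ($k\ge4$), I would use the index-three construction the paper mentions. Take $N=3M$ with $M$ odd, $G=\Z/N\Z$, $H=3\Z/N\Z$ of index $3$, and $A=H\cup\{x\}$ for some $x\notin H$. Then $|A|=N/3+1$, and every $k$-sum of distinct elements lies in $kH\cup(x+H)=H\cup(x+H)$, which misses the third coset. The same idea with $A=H$ shows, for each fixed $k$, that $c_k(n)\ge 1/3-o(1)$ along $n=3M$, so the $\limsup$ is at least $1/3$.

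For $c_2\ge1/2$ in $\Z/n\Z$, take $A=\{0,1,\dots,m\}$ with $|A|=(n+1)/2$. The sums of two distinct elements are $\{1,\dots,2m-1\}$, which misses $0$ and $2m$. For $c_3\ge 2/5$, take $n=5M$ and an arithmetic-progression or coset-type example of density about $2/5$ whose restricted triple sums miss a residue. A natural first attempt is the union of two cosets $H\cup(1+H)$ with $H$ of index $5$, giving restricted $3$-sums in $\{0,1,2,3\}+H$, which misses $4+H$. In fact $\{0,1\}+\{0,1\}+\{0,1\}=\{0,1,2,3\}$, so this works directly, and the Gallardo et al.\ conjecture value $2n/5+O(1)$ is then attained.

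The main obstacle I expect is bookkeeping in the $\limsup$ definition. I have to check that these lower-bound constructions exist in infinitely many odd orders with densities converging to the claimed constants. I also have to check that the $O(1)$ slack in each upper bound, and the range restriction $k\le|A|-4$ in \cref{thm:sharp-density}, disappear in the limit. Both points are straightforward once $k$ is fixed and $n\to\infty$.
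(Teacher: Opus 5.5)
Your proposal is correct and matches the paper's proof: the upper bounds come from the odd-order two-fold covering fact, Lev's three-fold theorem (its exception of two cosets of an index-five subgroup is ruled out once $|A|>2N/5$), and \cref{thm:sharp-density}, while the matching lower bounds come from intervals in odd cyclic groups, a union of two cosets of an index-five subgroup, and an index-three subgroup (optionally with one extra element). One small point to make explicit is that you should take $M$ odd in the $k=3$ example too, so that $n=5M$ is an odd order.
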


Suppose $N$ is composite. Let $p$ be the least prime divisor of $N$, let $H<G$ have index $p$, choose $y\notin H$, and take $D\subset y+H$ with $|D|=p-2$. Then $A=H\cup D$ has size $N/p+p-2$, but the coset $(p-1)y+H$ is disjoint from every restricted sumset $\Sigma_k(A)$. For $p=3$, this is exactly the obstruction showing that the threshold in \cref{thm:sharp-density} is sharp. More generally, it is the extremal construction underlying the classical subset-sum threshold $N/p+p-1$. The same threshold remains sharp when the number of summands is prescribed.

\begin{theorem}\label{thm:least-prime}
Fix an odd prime $p$. There exists $N_0=N_0(p)$ such that the following holds. Let $G$ be a finite abelian group of order $N\ge N_0$ whose least prime divisor is $p$. If $A\subseteq G$ satisfies $|A|\ge N/p+p-1$, then
\[
\Sigma_k(A)=G\qquad(2p-2\le k\le |A|-2p+2).
\]
\end{theorem}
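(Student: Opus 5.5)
First, a reduction by complementation: if $\sigma=\sum_{a\in A}a$, then $\Sigma_{|A|-k}(A)=\sigma-\Sigma_k(A)$. The range $2p-2\le k\le |A|-2p+2$ is symmetric under $k\mapsto |A|-k$, so it suffices to treat $2p-2\le k\le |A|/2$. I would then split according to whether $A$ is concentrated on cosets of a subgroup of index $p$. Since $p$ is the least prime divisor of $N$, every proper subgroup has order at most $N/p$, so the only subgroups that can absorb a large part of $A$ are those of index $p$ (or subgroups of index at least $p^2$ or $pq$, whose cosets hold at most $N/p^2$ elements). The two regimes are a generic one, where $A$ is not concentrated on a single coset of any subgroup $H$ of index $p$, and a structured one, where $|A\cap (x+H)|$ is close to $N/p$ for some index-$p$ subgroup $H$.

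In the generic regime I would use a Kneser-type argument for restricted sums, following Lev's restricted addition theory \cite[Theorem~3]{LevI} and the Hamidoune--Llad\'o--Serra structural results \cite{HLS}. Split $A$ into disjoint pieces $A_1,\dots,A_k$ with $A_1+\dots+A_k\subseteq\Sigma_k(A)$. Choose $k-2$ of the pieces to be singletons or short blocks that generate shifts, and let the remaining pieces be large. Disjoint large pieces $B,C$ of size about $|A|/2$ give $B+C\subseteq\Sigma_2(A)$. By Kneser's theorem, $|B+C|\ge |B|+|C|-|K|$ with $K=\Stab(B+C)$. If $K$ is trivial, the sumset already has size about $2N/p$. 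Iterating with further disjoint pieces, the restricted $k$-fold sums grow like $k|A|$ minus a bounded error, and they reach $G$ once $k$ is moderately large. For the smaller $k$ near $2p-2$, the length that is needed is precisely the $p-1$ extra summands required to escape cosets, as in the extremal example. Whenever the stabilizer is nontrivial, we pass to $G/K$ and induct on $N$. Here the hypothesis $N\ge N_0(p)$ is used to absorb the additive error terms (the $+p-1$ and the $O(p)$ losses from distinctness).

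In the structured regime, write $A=\bigcup_{i}(A\cap(x_i+H))$ with a big part $A_0=A\cap(x_0+H)$ of size at least $N/p-O(p)$, and at least $p-1$ elements outside $x_0+H$; the latter holds because $|A|\ge N/p+p-1$ forces at least $p-1$ elements off one coset. Translate so that $x_0=0$, noting that translation shifts $\Sigma_k$ by $k$ times the translate. Inside $H$, a set of density $1-o(1)$ satisfies $\Sigma_j(A_0)=H$ for all $j$ in a wide middle range, by induction or by the generic case applied in $H$; its least prime divisor is at least $p$. Next, choose $t$ elements outside $H$ with distinct-sum behavior in $G/H\cong\Z/p\Z$: by Dias da Silva--Hamidoune, or by the Cauchy--Davenport argument for elements of a set of size at least $p-1$ in nonzero classes, their restricted sums of lengths $0..p-1$ hit every residue class mod $p$. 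Combining $j+t=k$ with $t$ ranging over a suitable set gives all cosets. The requirement $k\ge 2p-2$ enters because we may need $t$ up to $p-1$ outside elements together with flexibility in $t$, while keeping $j=k-t$ in the range where $\Sigma_j(A_0)=H$; $j\ge p-1$ is comfortable there.

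The main obstacle I expect is the intermediate regime: $A$ is somewhat concentrated on an index-$p$ coset but not enough for the structured argument. The outside elements may themselves be spread, and exact thresholds such as $p-1$ versus $p-2$ outside elements must be matched with $k$ down to exactly $2p-2$. I would attack this with a stability version of the subset-sum result of Freeze--Gao--Geroldinger \cite{FreezeGaoGeroldinger}. Either $|\Sigma_k(A)|$ exceeds $N/p$ by a definite amount, and Kneser then closes the argument, or $A$ is close to the extremal $H\cup D$ structure, and the coset-counting argument above applies. Handling the distinctness constraint in this stability step carefully is where most of the work lies.
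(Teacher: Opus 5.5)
You have the symmetry reduction right, and your structured regime is essentially the endgame of the paper's proof. There you take $p-1$ elements off $H$, whose subset sums hit every class of $G/H$ by Cauchy--Davenport applied to the sets $\{0,\pi(y_i)\}$, and fill the remaining $k-j\ge p-1$ summands inside $H$. For that last step, use \cref{thm:sharp-density}, which needs only $|A\cap H|>|H|/3+1$. Induction on the present theorem does not suffice, because the least prime of $|H|$ may exceed $p$, and then $k-j$ can fall outside the inductive range.

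The genuine gap is the generic regime. Disjoint pieces do give $A_1+\dots+A_k\subseteq\Sigma_k(A)$. But suppose this sum misses an element and has trivial stabilizer. Kneser then yields only $|A_1+\dots+A_k|\ge\sum_i|A_i|-(k-1)\le|A|-k+1<N$, which is no contradiction. Already for two disjoint halves $B,C$ you get about $|A|\approx N/p$, not $2N/p$. Adding singletons or further disjoint pieces cannot raise this bound, because all pieces share the same total mass $|A|$.

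The growth like $k|A|$ that you invoke belongs to the ordinary sumset $kA$. Converting ordinary sums into restricted ones is exactly the difficulty. The step ``pass to $G/K$ and induct'' does not resolve it, since it would need a version of the theorem for the image of $A$ in $G/K$, which is a sequence rather than a set. So your concentrated/non-concentrated dichotomy leaves the intermediate regime genuinely open. The proposed stability version of Freeze--Gao--Geroldinger is a result about $\Sigma(A)$, not a fixed length, and it is not available to close that regime.

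The missing idea is to make the dichotomy quantitative through representation counts, with the arithmetic removal lemma as the bridge to structure. Put $s=2p-2$.

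If every $g$ has more than $\rho N^{s-1}$ ordinary ordered $s$-term representations from $A$, then at most $\binom s2N^{s-2}$ of them have a repeated entry, since doubling is bijective in odd order. So you can greedily extract at least $\rho N/(2s^2)$ pairwise disjoint $s$-subsets of $A$ summing to $g$. A union bound over all subsets of $A$ then gives $T\subseteq A$ with $|T|\le|A|/2$ and $\Sigma_s(T)=G$ (\cref{lem:counting-cover}). Disjoint fillers from $A\setminus T$ then give every $s\le k\le|A|/2$.

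If some $g_0$ has at most $\rho N^{s-1}$ representations, \cref{lem:removal} produces $X_i\subseteq A$ with $|X_i|\ge|A|-N/(10p^2)$ and $g_0\notin X_1+\dots+X_s$. Kneser's theorem with $2p-2$ summands of this size then forces a stabilizer of index exactly $p$, with some $X_i$ in a single coset (\cref{lem:least-prime-rigidity}). It is precisely the count of $2p-2$ summands that excludes stabilizers of index strictly between $p$ and $2p$. Thus all but $N/(10p^2)$ elements of $A$ lie in one index-$p$ coset, which is ample for your structured argument, and no intermediate regime remains.
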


Thus, throughout this range, prescribing the number of summands does not change the sharp covering threshold.

The starting length $2p-2$ is also essentially best possible. If $q=2p-1$ is prime and $t\ge2$, take $G=(C_p)^t\oplus C_q$ and $A=(C_p)^t\times\{0,1\}$. Then $p$ is the least prime divisor of $N$ and $|A|>N/p+p-1$, but $\Sigma_{2p-3}(A)\ne G$. More generally, the prime-gap theorem of Baker, Harman and Pintz \cite[Theorem~1]{BHP} gives examples with failure at a length $(2-o(1))p$. Hence the leading coefficient $2$ in the starting length is asymptotically optimal.

For shorter prescribed lengths, the threshold behaves differently. For integers $N\ge2$ and $k\ge1$, put
\[
\alpha_k(N)=\max_{\substack{d\mid N\\ d>1}}
\frac{\ceil{(d-1)/k}}{d}.
\]
Bajnok \cite[Theorem~9]{BajnokCritical} proved that the ordinary $k$-fold critical number is $\alpha_k(N)N+1$. Thus $\alpha_k(N)N$ is the largest possible size of a set $B\subset G$ with $kB\ne G$. Since $\Sigma_k(B)\subseteq kB$, the same quantity is an immediate lower bound for the restricted problem. For shorter prescribed lengths, this lower bound is asymptotically sharp.

\begin{theorem}\label{thm:asymptotic-critical}
Let $G$ be a finite abelian group of order $N$, and let $p$ be the least prime divisor of $N$. Let $3\le k\le2p-3$ and $\eps>0$. For all sufficiently large $N$, depending on $k$ and $\eps$, every $A\subseteq G$ with $|A|\ge(\alpha_k(N)+\eps)N$ satisfies $\Sigma_k(A)=G$.
\end{theorem}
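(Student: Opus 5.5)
The plan is to reduce the restricted problem to the ordinary $k$-fold problem by a regularity/removal argument. Since $3\le k\le 2p-3$ and every prime divisor of $N$ is at least $p$, we have $k<2p$, so $\lceil (d-1)/k\rceil\ge 1$ for every $d>1$. The maximum defining $\alpha_k(N)$ is therefore attained on a bounded set of ``competitive'' divisors. Since $\lceil(d-1)/k\rceil/d\ge 1/(2p)$ is needed to compete, and the value tends to $1/k$ along large $d$, only the structure at a bounded scale matters.

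\textbf{Step 1 (popular sums).} We use the following observation: if $g\in kA$ has many representations $g=a_1+\cdots+a_k$, say at least $\delta N^{k-1}$ of them, then some representation has pairwise distinct $a_i$. Indeed, the representations with $a_i=a_j$ for some $i\neq j$ number at most $\binom k2 N^{k-2}$, which is $o(N^{k-1})$. It therefore suffices to show that every $g\in G$ is represented at least $\delta(k,\eps)N^{k-1}$ times in $kA$. We thus need a robust version of Bajnok's theorem \cite[Theorem~9]{BajnokCritical}: if $|A|\ge(\alpha_k(N)+\eps)N$, then $1_A^{*k}(g)\ge\delta N^{k-1}$ for all $g$.

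\textbf{Step 2 (robust covering via arithmetic removal).} Suppose $g$ has fewer than $\delta N^{k-1}$ representations. By the arithmetic removal lemma for abelian groups (Král'--Serra--Vena, or Green's removal lemma for the linear equation $x_1+\cdots+x_k=g$), we can delete at most $\eps N/2$ elements from $A$ to obtain $A'$ with $g\notin kA'$. Then $|A'|\ge(\alpha_k(N)+\eps/2)N>\alpha_k(N)N$, which contradicts Bajnok's critical number result, since that result says every set of size exceeding $\alpha_k(N)N$ satisfies $kB=G$. We must check that the removal lemma gives $\delta$ depending only on $k$ and $\eps$, uniformly over finite abelian groups; the Král'--Serra--Vena version does this. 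With $\delta$ in hand, Step~1 finishes the argument once $N$ is large enough that $\binom k2N^{k-2}<\delta N^{k-1}$.

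\textbf{Main obstacle.} The delicate point is the removal lemma's dependence, which is tower-type but uniform, so ``sufficiently large $N$ depending on $k$ and $\eps$'' accommodates it. The hypothesis $k\le 2p-3$ is not even used in this route. That is suspicious, but consistent: for larger $k$ the ordinary bound $\alpha_k(N)N$ drops below $N/p+p-1$ only by $O(1)$, while the restricted obstruction $H\cup D$ exceeds it by an additive constant. An $\eps N$ slack absorbs this, so the statement presumably restricts $k$ only because the exact threshold is treated elsewhere. If the removal lemma is considered too heavy, the fallback is a direct argument: apply Kneser's theorem to $A+A$ to find a stabilizer $H$ of bounded index. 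Then count representations coset-by-coset, using that each large coset intersection $A\cap(x+H)$ has density bounded below, and use Cauchy--Davenport/Kneser inside the quotient $G/H$ with the $\alpha_k$ extremal analysis. I expect this fallback to be considerably messier.
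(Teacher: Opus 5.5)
Your argument is essentially the paper's proof: bound the colliding representations of $g$ by $\binom{k}{2}N^{k-2}$, apply the arithmetic removal lemma to $k$ copies of $A$ with parameter $\eta=\eps/(2k)$ and intersect the surviving sets, then contradict Bajnok's critical-number formula; the intersection step is what your single set $A'$ with at most $\eps N/2$ deletions implicitly requires. The paper justifies the collision bound using that $N$ is odd, which is where it uses $k\le 2p-3$; your bound, which you state without proof, holds in any finite abelian group for $k\ge3$ (choose the repeated value and $k-3$ of the other entries, and the last entry is then forced), so your remark that the hypothesis is not essential to this route is fair.
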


For example, when $N=q$ is prime, one has $\alpha_k(q)=\frac{\lceil(q-1)/k\rceil}{q}=\frac1k+O(q^{-1})$ for each fixed $k$. Thus, for shorter lengths, the threshold depends on $k$ and on the divisor structure of $N$, while for $2p-2\le k\le |A|-2p+2$, Theorem~\ref{thm:least-prime} gives the exact threshold $N/p+p-1$. Together with Theorem~\ref{thm:sharp-density}, these results give the limiting threshold $1/3$ for every fixed $k\ge4$ in odd-order groups. The short-length theorem also determines the first covering length when $|A|\ge c|G|$ for a fixed $c>0$.

\begin{corollary}\label{cor:positive-density}
Let $0<c<1$. There exists $p_0(c)$ such that if the least prime divisor of $|G|$ is larger than $p_0(c)$ and $A\subset G$ satisfies $|A|\ge c|G|$, then
\[
\Sigma_k(A)=G\qquad\left(\frac1c<k<|A|-\frac1c\right).
\]
The lower endpoint is best possible.
\end{corollary}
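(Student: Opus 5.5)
We deduce the covering statement from \cref{thm:asymptotic-critical}, applied at a single short length, together with an extension argument that propagates covering to longer lengths. For the lower endpoint we give an explicit construction.

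\emph{Covering.} Let $m=\lfloor 1/c\rfloor+1$ be the least integer exceeding $1/c$. Then $\alpha_m(N)\le \max_{d}\lceil (d-1)/m\rceil/d\le 1/m+1/p$, so $\alpha_m(N)\le 1/m+1/p$. Since $1/m<c$, we may fix $\eps=(c-1/m)/2$ and choose $p_0(c)$ so large that $1/p_0<\eps$ and $2p_0-3\ge m$. Then $|A|\ge cN\ge(\alpha_m(N)+\eps)N$, and $3\le m\le 2p-3$ holds whenever $m\ge3$; the cases $m\le2$ need a separate short argument or are absorbed by enlarging $m$. Moreover, $N\ge p>p_0$ is large, so \cref{thm:asymptotic-critical} gives $\Sigma_m(A)=G$. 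For the other values $k$ with $1/c<k<|A|-1/c$, we would like the same conclusion, and here I would apply \cref{thm:asymptotic-critical} directly to each $k$ with $m\le k\le 2p-3$. We have $\alpha_k\le\alpha_m$ up to the $1/p$ error, which is uniform because $\lceil (d-1)/k\rceil/d\le 1/k+1/d$. The catch is that the ``sufficiently large $N$ depending on $k$'' condition is not uniform in $k$. To handle this, I would use that $k\ge m$ permits a reduction: pick a subset $A'\subseteq A$ of size $|A'|\approx(\alpha_m+\eps)N$ together with $k-m$ further elements, and show $\Sigma_k(A)\supseteq x+\Sigma_m(A\setminus X)$ for a fixed $(k-m)$-set $X$. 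Removing $k-m$ elements keeps the density above the threshold as long as $k-m=o(N)$, and this reduces every $k\le\eps N/2$ to the single length $m$. For $k\ge 2p-2$ with $|A|\ge N/p+p-1$ (true since $cN\ge N/p+p$ for large $p$), the middle range is covered by \cref{thm:least-prime}, or by the complement symmetry $\Sigma_k(A)=\sigma(A)-\Sigma_{|A|-k}(A)$ for $k$ near $|A|$. The symmetry also handles the upper endpoint $k<|A|-1/c$: for such $k$, $|A|-k>1/c$, so $|A|-k\ge m$ and the lower-range result applies to $|A|-k$.

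\emph{Sharpness.} Take $G=\Z/q\Z$ with $q$ a large prime and $A$ an arithmetic progression $\{0,1,\dots,\ell-1\}$ with $\ell=\lceil cq\rceil$. Then $\Sigma_k(A)$ is an interval of length $k(\ell-k)+1<k\ell$. If $k\le 1/c$ (so $k\ell<q$ up to rounding), this interval misses residues, so the lower endpoint cannot be decreased. Choosing $\ell$ slightly larger than $cq$ so that $k(\ell-k)+1<q$ exactly at $k=\lfloor1/c\rfloor$ needs a little care with rounding, and requires $1/c$ not an integer or a borderline adjustment.

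The main obstacle is the uniformity in $k$ in the middle range, namely passing from lengths up to $2p-3$ (whose constants depend on $k$) to the range covered by \cref{thm:least-prime}. The removal argument above, where $k-m$ fixed elements are split off and the single length $m$ is used, is what I would try first. Its weakness is that it needs $\eps N\gg k$, so it has to be combined with \cref{thm:least-prime} and the symmetry to cover everything.
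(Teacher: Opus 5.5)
You have a genuine gap in the middle range of lengths. Your filler reduction to the single length $m$ is valid, but it only tolerates deleting about $\eps N$ elements. So it covers $m\le k\le m+\eps N/2$ and, by symmetry, the mirror range. Everything in between rests on \cref{thm:least-prime}, which cannot be used here, for three reasons.
\begin{itemize}
\item That theorem is stated for a \emph{fixed} odd prime $p$ and $N\ge N_0(p)$. In the corollary, $p$ is an arbitrary prime exceeding $p_0(c)$, and nothing guarantees $N\ge N_0(p)$.
\item Its size hypothesis can fail outright. For $G=C_p$ with $p$ prime one has $N/p+p-1=N$, so your claim that $cN\ge N/p+p$ for large $p$ is false, and the hypothesis $|A|\ge N/p+p-1$ would force $A=G$.
\item Even where it applies, it starts only at $2p-2$. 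Lengths between roughly $\eps N/2$ and $2p-2$ stay uncovered whenever $N$ is comparable to $p$.
\end{itemize}

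The missing idea is a second, still fixed, length $L=\lfloor 2/c\rfloor+1$, chosen so that $1/L<c/2$.
\begin{itemize}
\item For $L\le k\le |A|/2$, take a filler $F\subseteq A$ with $|F|=k-L$. Then $|A\setminus F|\ge |A|/2\ge cN/2$. Since $\alpha_L(N)\le 1/L+1/p$, this exceeds $(\alpha_L(N)+\delta)N$ for small $\delta$ and large $p$.
\item \cref{thm:asymptotic-critical} at the single length $L$ then gives $\Sigma_L(A\setminus F)=G$, hence $G=\sigma_F+\Sigma_L(A\setminus F)\subseteq\Sigma_k(A)$.
\item The lengths $\max\{3,m\}\le j\le L$ are handled by \cref{thm:asymptotic-critical} directly, or by your length-$m$ filler argument.
\item Only finitely many lengths are ever fed to \cref{thm:asymptotic-critical}. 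So $p_0(c)$ can be taken larger than all the corresponding thresholds $N_0(j,\delta)$, and large enough that $L\le 2p-3$. This removes the uniformity problem you identified.
\item \cref{lem:symmetry} then gives the upper half.
\end{itemize}

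Two smaller points:
\begin{itemize}
\item When $c>1/2$, the statement asserts $\Sigma_2(A)=G$, so this case cannot be ``absorbed by enlarging $m$''. It follows from the odd-order two-fold covering fact of Gallardo et al.\ once $cN>(N+1)/2$, i.e.\ once $N>1/(2c-1)$.
\item Your sharpness example is the paper's, and the rounding needs no special care. From $\ell=\lceil cq\rceil<cq+1$ and $kc\le 1$ you get $k(\ell-k)+1<q-k(k-1)+1<q$ for $2\le k\le 1/c$, while $\ell<q$ handles $k=1$. This works whether or not $1/c$ is an integer.
\end{itemize}
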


There is also a direct application to MDS elliptic codes. Let $E/\F_q$ be an elliptic curve. Since $E(\F_q)$ is a finite abelian group, the MDS condition for an elliptic evaluation code can be expressed as the failure of a prescribed group element to lie in a fixed-length restricted sumset of the evaluation set; see \cref{lem:MDS-criterion}. Covering results for $E(\F_q)$ therefore translate directly into upper bounds for the lengths of such codes.

Li, Wan and Zhang conjectured a one-half maximal-length scale for MDS elliptic codes \cite[p.~2]{LiWanZhang}. Han and Ren obtained an asymptotic answer and bounds within an additive constant of the half-group scale \cite[Theorem~1.2]{HanRen}, while Chen and Huang proved the exact half-group bound for $q\ge47089$ \cite[Theorem~32]{ChenHuang}. Our results give the sharp $1/3$-scale bound when $|E(\F_q)|$ is odd and, more generally, the exact bound determined by the least prime divisor of $|E(\F_q)|$.

At the three-fold endpoints, \cref{lem:MDS-criterion} together with \cref{prop:three-fold}(a) also gives $n\le2|E(\F_q)|/5$ when $|E(\F_q)|$ is odd and at least $149$, and $k\in\{3,n-3\}$.

\begin{theorem}\label{thm:elliptic}
Let $C$ be an $[n,k]$ MDS elliptic code on an elliptic curve $E/\F_q$.
\begin{enumerate}[label=\textup{(\alph*)},leftmargin=2.5em]
\item Suppose $|E(\F_q)|$ is odd. If $q>655$ and $4\le k\le n-4$, then
\[
n\le \frac{|E(\F_q)|}{3}+1.
\]
\item Suppose the least prime divisor $p$ of $|E(\F_q)|$ is odd. For fixed $p$ and sufficiently large $q$, if $2p-2\le k\le n-2p+2$, then
\[
n\le \frac{|E(\F_q)|}{p}+p-2.
\]
\end{enumerate}
\end{theorem}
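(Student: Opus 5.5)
The plan is to translate the MDS property into a non-covering statement for a restricted sumset in $E(\F_q)$, and then apply \cref{thm:sharp-density} and \cref{thm:least-prime} in contrapositive form.

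First, invoke \cref{lem:MDS-criterion}. In its usual form, an $[n,k]$ elliptic evaluation code with evaluation set $A=\{P_1,\dots,P_n\}\subset E(\F_q)$ of size $n$ and divisor $D$ is MDS if and only if a prescribed point $P_0$ (determined by $D$) is not a sum of $k$ distinct points of $A$. In other words, $P_0\notin\Sigma_k(A)$, and in particular $\Sigma_k(A)\ne E(\F_q)$. Put $G=E(\F_q)$ and $N=|G|$. If the paper's version of the criterion is phrased with $n-k$ summands instead of $k$, the symmetry $\Sigma_{n-k}(A)=\sigma(A)-\Sigma_k(A)$ converts one form into the other. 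Both ranges in the statement are symmetric under $k\mapsto n-k$, so this causes no trouble.

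For (a), suppose for contradiction that $n>N/3+1$. We need $N>605$ before \cref{thm:sharp-density} applies. The Hasse bound gives $N\ge q+1-2\sqrt q$, and for $q>655$ this is at least $655-2\sqrt{655}+1\approx 604.8$. Since $N$ is an integer, $N\ge 605$. I expect this to be the delicate point: the equality $N=605$ has to be excluded, either by sharpening the estimate (for $q\ge 657$ we get $q+1-2\sqrt q>605$, and $q=656$ is not a prime power), or by noting that the hypothesis $q>655$ was chosen precisely so that the bound is strict. With $N>605$, odd $N$, $|A|=n>N/3+1$ and $4\le k\le n-4$, \cref{thm:sharp-density} gives $\Sigma_k(A)=G$, which contradicts the criterion. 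Hence $n\le N/3+1$.

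For (b), suppose $n\ge N/p+p-1$. Hasse's bound gives $N\ge(\sqrt q-1)^2\to\infty$, so for $q$ large we have $N\ge N_0(p)$. The least prime divisor of $N$ is the fixed odd prime $p$, and $2p-2\le k\le n-2p+2=|A|-2p+2$, so \cref{thm:least-prime} yields $\Sigma_k(A)=G$, again a contradiction. Therefore $n\le N/p+p-2$.

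The only genuine work lies in checking that the form of \cref{lem:MDS-criterion} matches the index ranges and in the numerical verification $N>605$. The covering theorems do all the rest.
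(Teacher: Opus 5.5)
Your proposal is correct and follows essentially the same route as the paper: represent $C$ as $C_L(P,D)$ and use \cref{lem:MDS-criterion} to get $\Sigma_k(P)\ne E(\F_q)$. Then apply \cref{thm:sharp-density} and \cref{thm:least-prime} contrapositively, with the Hasse--Weil bound $N\ge(\sqrt q-1)^2$ supplying $N>605$ and $N\ge N_0(p)$. On the numerical point, you do not need the remark about prime powers: $q>655$ already means $q\ge656$, and $(\sqrt{656}-1)^2\approx605.8>605$.
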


When $|E(\F_q)|$ is even, the corresponding half-scale covering result follows from Roth and Lempel \cite[Theorem~3.1 and Remark~3.1]{RothLempel}. Combining the even- and odd-order cases with the Hasse--Weil bound gives the following uniform consequence.

\begin{corollary}\label{cor:elliptic-half}
Let $C$ be an $[n,k]$ MDS elliptic code on $E/\F_q$, with $3\le k\le n-3$. If $q>655$, then $n\le |E(\F_q)|/2$.
\end{corollary}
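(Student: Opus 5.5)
The plan is to deduce \cref{cor:elliptic-half} from \cref{thm:elliptic}(a) and the Roth--Lempel theorem, splitting on the parity of $|E(\F_q)|$. The link between codes and sumsets is \cref{lem:MDS-criterion}. If $C$ is an $[n,k]$ MDS elliptic code with evaluation set $A\subset E(\F_q)$, $|A|=n$, then some prescribed group element lies outside $\Sigma_k(A)$. Equivalently, $\Sigma_k(A)\ne E(\F_q)$. Hence every covering theorem of the form ``$|A|>\theta$ implies $\Sigma_k(A)=G$'' gives $n\le\theta$ for the corresponding range of $k$.

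\proofcase{Even order} If $|E(\F_q)|$ is even, I apply Roth--Lempel \cite[Theorem~3.1 and Remark~3.1]{RothLempel} with $G=E(\F_q)$. For $3\le k\le n-3$, the condition $|A|>|G|/2$ forces $\Sigma_k(A)=G$, except in their exceptional endpoint cases. So I first check that $3\le k\le n-3$ avoids those exceptions. These are the cases $k\in\{1,2,|A|-2,|A|-1\}$, together with possibly small or elementary $2$-group configurations. Any such configuration should be ruled out once $|G|\ge q+1-2\sqrt q$ is large, which holds for $q>655$. Given this, $n\le |E(\F_q)|/2$.

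\proofcase{Odd order, $4\le k\le n-4$} Here \cref{thm:elliptic}(a) gives $n\le |E(\F_q)|/3+1$. This is at most $|E(\F_q)|/2$ as soon as $|E(\F_q)|\ge6$, and Hasse--Weil gives $|E(\F_q)|\ge q+1-2\sqrt q>600$ for $q>655$.

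\proofcase{Odd order, $k\in\{3,n-3\}$} For $k=3$ I use the three-fold bound noted before \cref{thm:elliptic}: \cref{lem:MDS-criterion} together with \cref{prop:three-fold}(a) gives $n\le 2|E(\F_q)|/5\le |E(\F_q)|/2$, valid since $|E(\F_q)|\ge149$. For $k=n-3$ the same bound applies by complementation. Sums of $n-3$ distinct elements of $A$ are $\sigma(A)-\Sigma_3(A)$, where $\sigma(A)$ denotes the sum of all elements of $A$. So $\Sigma_{n-3}(A)=G$ if and only if $\Sigma_3(A)=G$, and the three-fold bound transfers.

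The main obstacle is bookkeeping rather than mathematics. First, I must verify that $q>655$ yields $|E(\F_q)|$ above every threshold used: $N>605$ in \cref{thm:sharp-density} through \cref{thm:elliptic}(a), $149$ for the three-fold case, and whatever size Roth--Lempel need. Since $q+1-2\sqrt q>655-2\sqrt{656}+1>604$, this inequality must be checked carefully, as it sits right at the $605$ boundary. In fact \cref{thm:elliptic}(a) already absorbs that boundary, so I only invoke it. Second, I must confirm that the Roth--Lempel exceptional cases do not occur for $3\le k\le n-3$.
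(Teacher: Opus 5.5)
Your decomposition matches the paper's: in odd order, \cref{prop:three-fold}(a) handles $k\in\{3,n-3\}$ (with complementation for $n-3$) and \cref{thm:elliptic}(a) handles $4\le k\le n-4$; in even order you use Roth--Lempel. The odd-order half is correct.

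The gap is in the even-order case at $k\in\{3,n-3\}$. There the Roth--Lempel exceptions are the groups $C_2^r$ and $C_4\oplus C_2^{r-1}$ (see \cref{thm:roth-lempel}(a)). These are not small configurations, because they occur in every size. For example, in $G=C_2^r$ take an index-two subgroup $H$, some $x\notin H$, and $A=H\cup\{x\}$. Then $|A|=N/2+1>N/2$, yet $x\notin\Sigma_3(A)$. Three elements of $H$ sum into $H$, and $h_1+h_2+x=x$ would force $h_1=h_2$. So your claim that these configurations are ruled out once $|G|\ge q+1-2\sqrt q$ is large is false. No lower bound on $N$ alone can close this case.

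The missing ingredient is the group structure of $E(\F_q)$. It is isomorphic to $C_{n_1}\oplus C_{n_2}$, so it has rank at most two; the paper cites R\"uck for this. An elementary abelian $2$-group, or a group $C_4\oplus C_2^{r-1}$, of rank at most two has order at most $8$. That is incompatible with $N>604$. With this input, \cref{thm:roth-lempel}(a) covers $k\in\{3,n-3\}$, and \cref{thm:roth-lempel}(b) covers $4\le k\le n-4$ for every group, since its size condition $N\ge12$ holds. Your argument then coincides with the paper's.
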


The proofs use two main ideas. For the one-third theorem, restricted-sum estimates and Kneser's theorem are used to show that suitable sums of fibres fill the required cosets, while Lemma~\ref{lem:quotient-selection} chooses distinct unused elements whose images have any prescribed sum in the quotient. For the exact least-prime threshold, if some group element has few representations, arithmetic removal and Kneser's theorem force almost all of $A$ into one coset of an index-$p$ subgroup. The one-third theorem then gives the required covering inside the subgroup, while the elements outside it are used to choose the required quotient class. If every group element has many representations, Lemma~\ref{lem:counting-cover} produces a subset $T\subseteq A$ with $|T|\le |A|/2$ and $\Sigma_{2p-2}(T)=G$, from which all longer lengths in the required range follow.

The paper is organized as follows. Section~2 collects the basic tools, and Section~3 treats the small restricted sums needed later. Section~4 proves the sharp one-third theorem, while Section~5 proves the exact least-prime threshold. Section~6 deals with shorter prescribed lengths, and Section~7 gives the application to MDS elliptic codes.

\noindent\textbf{Statement of AI.} The paper was written by the authors. ChatGPT was used to assist with routine calculations and checks, language editing, and reorganization of the manuscript.

\noindent\textbf{Acknowledgment.} This work was supported by the National Natural Science Foundation of China (Grant No.~12571578).

\section{Preliminaries}\label{sec:preliminaries}
Write $C_n$ for the cyclic group of order $n$.  When $|G|>1$, write $p(G)$
for the least prime divisor of $|G|$.  Write
$G[2]=\{x\in G:2x=0\}$ for the two-torsion subgroup.
We use $\Sigma_0(A)=\{0\}$ and $\Sigma_k(A)=\varnothing$ when $k>|A|$.
For a finite set $A\subset G$, write $\sigma_A=\sum_{a\in A}a$.  For $X\subset G$, write $\langle X\rangle$ for the subgroup generated by $X$.

\Needspace{7\baselineskip}
\begin{lemma}\label{lem:symmetry}
Let $A$ be a finite subset of an abelian group $G$.
\begin{enumerate}[label=\textup{(\roman*)},leftmargin=2.5em]
\item For every $0\le k\le |A|$, one has $\Sigma_k(A)=\sigma_A-\Sigma_{|A|-k}(A)$. Consequently, to prove $\Sigma_k(A)=G$ for all $u\le k\le |A|-u$, it is enough to prove it for $u\le k\le |A|/2$.
\item Let $n\ge2u$, and suppose that every $n$-element subset $B\subset G$ satisfies $\Sigma_j(B)=G$ for $u\le j\le n-u$. Then every $A\subset G$ with $|A|\ge n$ satisfies $\Sigma_k(A)=G$ for $u\le k\le |A|-u$.
\end{enumerate}
\end{lemma}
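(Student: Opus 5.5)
For (i), the plan is to use complementation. The map $S\mapsto A\setminus S$ is a bijection from the $k$-element subsets of $A$ to the $(|A|-k)$-element subsets. Since $\sum_{s\in S}s=\sigma_A-\sum_{s\in A\setminus S}s$, taking images over all such $S$ gives $\Sigma_k(A)=\sigma_A-\Sigma_{|A|-k}(A)$. Translation and negation are bijections of $G$, so $\Sigma_k(A)=G$ holds if and only if $\Sigma_{|A|-k}(A)=G$. Now let $u\le k\le |A|-u$. If $k\le |A|/2$ there is nothing to prove. Otherwise $k':=|A|-k$ satisfies $u\le k'<|A|/2$, and the claim for $k'$ gives it for $k$.

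For (ii), the plan is to pass to a well-chosen subset. Let $|A|=m\ge n$ and $u\le k\le m-u$. For any $B\subseteq A$ we have $\Sigma_j(B)\subseteq\Sigma_j(A)$, so it suffices to find an $n$-subset $B$ and an index $j$ with $u\le j\le n-u$ such that $\Sigma_k(A)\supseteq \Sigma_j(B)+c$ for some fixed $c$. To get this, I would split $A$ as $B\sqcup C$ with $|B|=n$ and $|C|=m-n$, fix a subset $C_0\subseteq C$ of size $i=k-j$, and use the inclusion $\Sigma_j(B)+\sigma_{C_0}\subseteq\Sigma_k(A)$. This inclusion holds because each sum of $j$ distinct elements of $B$, together with the elements of $C_0$, is a sum of $k$ distinct elements of $A$, as $B$ and $C_0$ are disjoint. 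The translate equals $G$ because $\Sigma_j(B)=G$ by hypothesis.

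It remains to choose $j$ and $i$ with $i+j=k$, $0\le i\le m-n$ and $u\le j\le n-u$. Equivalently, we need $\max(u,\,k-(m-n))\le \min(n-u,\,k)$. Checking the four comparisons: $u\le n-u$ holds because $n\ge 2u$; $u\le k$ holds by assumption; $k-(m-n)\le n-u$ is exactly $k\le m-u$; and $k-(m-n)\le k$ holds because $m\ge n$. So an admissible $j$ exists, for instance $j=\max(u,k-m+n)$.

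I do not expect a real obstacle here. The only point needing care is the interval bookkeeping in (ii), namely making sure that $j$ stays in $[u,n-u]$ while $i$ stays in $[0,m-n]$; this is exactly where $n\ge 2u$ is used.
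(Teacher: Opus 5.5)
Your proposal is correct and follows the paper's argument: complementation for (i), and for (ii) an $n$-subset $B$ together with a disjoint filler set whose fixed sum translates $\Sigma_j(B)=G$ into $\Sigma_k(A)$. The only difference is which $j$ you pick from the same feasible interval $[\max\{u,k-(m-n)\},\min\{n-u,k\}]$. The paper takes the upper end $j=\min\{k,n-u\}$, splitting into the cases $k\le n-u$ and $k>n-u$. You take the lower end $j=\max\{u,k-(m-n)\}$, and that choice works equally well.
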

\begin{proof}
Part~\textup{(i)} follows by taking complements of the chosen subsets in $A$.
For part~\textup{(ii)}, put $m=|A|$. If $k\le n-u$, choose any $n$-element subset $B\subset A$; then $\Sigma_k(B)=G$, and hence $\Sigma_k(A)=G$. If $k>n-u$, choose $F\subset A$ with $|F|=k-(n-u)$. Since $k\le m-u$, we have $|A\setminus F|=m-k+n-u\ge n$. Choose an $n$-element subset $B\subset A\setminus F$. Then
\[
G=\sigma_F+\Sigma_{n-u}(B)\subseteq\Sigma_k(A),
\]
which proves the claim.
\end{proof}

\begin{lemma}\label{lem:translation}
Let $u\in G$ and $A+u=\{a+u:a\in A\}$. Then, for every $k$, one has $\Sigma_k(A+u)=\Sigma_k(A)+ku$. Consequently $\Sigma_k(A)=G$ if and only if $\Sigma_k(A+u)=G$.
\end{lemma}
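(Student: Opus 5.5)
The plan is to verify the identity $\Sigma_k(A+u)=\Sigma_k(A)+ku$ directly from the definition, using the fact that translation is a bijection $A\to A+u$. The equivalence then follows at once because translation by $ku$ is a bijection of $G$.

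First, for $k>|A|=|A+u|$ both sides are empty and there is nothing to prove, while for $k=0$ both sides equal $\{0\}$. So fix $1\le k\le |A|$.

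For the inclusion $\supseteq$, take pairwise distinct $a_1,\dots,a_k\in A$. Since $x\mapsto x+u$ is injective, the elements $a_1+u,\dots,a_k+u$ are pairwise distinct elements of $A+u$. Their sum is $a_1+\cdots+a_k+ku$, so $\Sigma_k(A)+ku\subseteq\Sigma_k(A+u)$.

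For the inclusion $\subseteq$, take pairwise distinct $b_1,\dots,b_k\in A+u$. Write $b_i=a_i+u$ with $a_i=b_i-u\in A$; these $a_i$ are pairwise distinct. Then $b_1+\cdots+b_k=(a_1+\cdots+a_k)+ku\in\Sigma_k(A)+ku$. Together the two inclusions give the identity.

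For the consequence, the map $x\mapsto x+ku$ is a bijection of $G$. Hence $\Sigma_k(A+u)=\Sigma_k(A)+ku$ equals $G$ exactly when $\Sigma_k(A)=G$.

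No step here is a genuine obstacle. The only point needing care is to remember that distinctness of summands is preserved under translation, and this follows from injectivity of translation.
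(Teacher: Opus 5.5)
Your proposal is correct and matches the paper's one-line argument: translation by $u$ is a bijection $A\to A+u$ that preserves distinctness and shifts each $k$-term sum by $ku$, and the equivalence follows because $x\mapsto x+ku$ is a bijection of $G$. You also check the edge cases $k=0$ and $k>|A|$, which the paper leaves implicit.
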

\begin{proof}
A sum of $k$ distinct elements of $A+u$ is obtained uniquely from a sum of $k$ distinct elements of $A$ by adding $ku$.
\end{proof}

For $S\subset G$, write $\Stab(S)=\{g\in G:g+S=S\}$.
We use the iterated stabilizer form of Kneser's theorem; see Kneser \cite{Kneser} and the two-set form in Tao--Vu \cite[Theorem~5.5, p.~200]{TaoVu}.

\begin{theorem}[Kneser]\label{thm:kneser}
Let $X_1,\ldots,X_t$ be finite non-empty subsets of an abelian group $G$, and let $H=\Stab(X_1+\cdots+X_t)$. Then
\[
|X_1+\cdots+X_t|\ge |X_1+H|+\cdots+|X_t+H|-(t-1)|H|.
\]
\end{theorem}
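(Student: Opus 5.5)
The plan is to deduce the iterated inequality from the two-set form \cite[Theorem~5.5]{TaoVu} by induction on $t$. The induction runs after passing to a quotient in which the stabilizer of the full sum is trivial. The case $t=1$ is trivial, and $t=2$ is exactly the two-set theorem.

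\textbf{Reduction to trivial stabilizer.} Put $X=X_1+\cdots+X_t$ and $H=\Stab(X)$. Since $H$ stabilizes $X$, replacing each $X_i$ by $X_i+H$ does not change the sum:
\[
(X_1+H)+\cdots+(X_t+H)=X+H=X.
\]
Let $\pi:G\to G/H$ be the quotient map and put $Y_i=\pi(X_i)$. Then $|X_i+H|=|H|\,|Y_i|$, $|X|=|H|\,|Y_1+\cdots+Y_t|$, and $\Stab(Y_1+\cdots+Y_t)$ is trivial in $G/H$. Indeed, if $\pi(g)$ stabilizes the image sum, then $g$ stabilizes $X$, because $X$ is a union of $H$-cosets. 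Dividing the desired inequality by $|H|$, it suffices to prove the following claim.

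\textbf{Claim.} If $Y_1,\ldots,Y_t$ are finite non-empty subsets of an abelian group and $Y_1+\cdots+Y_t$ has trivial stabilizer, then
\[
|Y_1+\cdots+Y_t|\ge |Y_1|+\cdots+|Y_t|-(t-1).
\]

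\textbf{Induction for the claim.} Let $S=Y_1+\cdots+Y_{t-1}$ and $K=\Stab(S)$. Then $K$ also stabilizes $S+Y_t$, since $k+S+Y_t=S+Y_t$ for $k\in K$. So $K\subseteq\Stab(S+Y_t)$, which is trivial; hence $K$ is trivial. This is the key observation: triviality of the stabilizer passes from the full sum to every partial sum, so the inductive hypothesis applies to $Y_1,\ldots,Y_{t-1}$ and gives
\[
|S|\ge \sum_{i<t}|Y_i|-(t-2).
\]
Applying the two-set Kneser theorem to $S$ and $Y_t$, with trivial stabilizer, gives $|S+Y_t|\ge |S|+|Y_t|-1$. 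Combining the two bounds proves the claim.

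The main obstacle I anticipated was the naive induction. There one applies the inductive hypothesis with the stabilizer $K$ of the partial sum, which may be strictly larger than $H$. The resulting terms $|X_i+K|-(t-2)|K|$ then do not compare cleanly with $|X_i+H|-(t-2)|H|$. Passing first to $G/H$ removes this issue, because every partial stabilizer is contained in the full stabilizer and so becomes trivial in the quotient.
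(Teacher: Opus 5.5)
Your argument is correct. The paper gives no proof of its own: it simply cites Kneser and the two-set form in Tao--Vu, and your reduction to $G/H$ followed by induction with trivial stabilizers is exactly the standard deduction that citation leaves implicit.

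One small slip is in your closing remark. The partial stabilizer $K$ is contained in $H$, as your own key observation shows. So the difficulty with the naive induction is that $K$ may be strictly \emph{smaller} than $H$, not larger.
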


We also use Cauchy--Davenport in $C_p$, in the form stated in Tao--Vu \cite[Theorem~5.4, p.~200]{TaoVu}.

\begin{theorem}[Cauchy--Davenport]\label{thm:CD}
If $p$ is prime and $X_1,\ldots,X_t\subset C_p$ are non-empty, then
\[
|X_1+\cdots+X_t|\ge \min\{p,|X_1|+\cdots+|X_t|-(t-1)\}.
\]
\end{theorem}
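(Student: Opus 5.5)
The statement immediately above is the Cauchy--Davenport theorem, cited from Tao--Vu. If I were to prove it rather than cite it, I would first treat the two-set case $t=2$ and then extend to $t$ sets by induction.

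\emph{Reduction to two sets.} Suppose $|X_1+X_2|\ge\min\{p,|X_1|+|X_2|-1\}$ for all non-empty $X_1,X_2\subset C_p$. Let $Y=X_1+\cdots+X_{t-1}$. By the inductive hypothesis, $|Y|\ge\min\{p,\sum_{i<t}|X_i|-(t-2)\}$. Applying the two-set bound to $Y+X_t$ gives
\[
|Y+X_t|\ge\min\Bigl\{p,\ \min\{p,\textstyle\sum_{i<t}|X_i|-(t-2)\}+|X_t|-1\Bigr\}.
\]
If the inner minimum is $p$, the outer expression is already at least $p$. Otherwise it equals $\sum_i|X_i|-(t-1)$. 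In either case we obtain the claimed bound.

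\emph{Two-set case.} The quickest route uses tools already in the paper: apply Kneser's theorem (\cref{thm:kneser}) with $t=2$. Let $H=\Stab(X_1+X_2)$. Since $p$ is prime, $H$ is either $\{0\}$ or all of $C_p$.
\begin{itemize}
\item If $H=C_p$, then $X_1+X_2$ is a non-empty union of $H$-cosets, so $X_1+X_2=C_p$ and $|X_1+X_2|=p$.
\item If $H=\{0\}$, Kneser's inequality reads $|X_1+X_2|\ge|X_1|+|X_2|-1$ directly.
\end{itemize}
In both cases $|X_1+X_2|\ge\min\{p,|X_1|+|X_2|-1\}$.

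\emph{Alternative without Kneser.} One could instead use the Davenport transform: induct on $|X_2|$. If $|X_2|=1$, the bound is trivial. If $X_1+X_2\ne C_p$ and $|X_2|\ge2$, translate so that $0\in X_2$ and $X_1\not\supseteq X_1+X_2$. Such a translation exists because $C_p$ has no proper non-trivial subgroups, so the set $X_1$ cannot be stable under all differences of $X_2$. Then replace the pair $(X_1,X_2)$ by $(X_1\cup(X_1+X_2'),X_2')$ with $X_2'$ smaller, preserving $|X_1|+|X_2|$ and not enlarging the sumset.

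The only delicate point in this alternative is verifying that a suitable translate exists. That is exactly where primality of $p$ enters.

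Since \cref{thm:kneser} is available in the paper, the Kneser route makes the main obstacle disappear. The remaining work is the routine bookkeeping of the minimum in the induction step.
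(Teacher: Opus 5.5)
Your main argument is correct, but it is not what the paper does: the paper gives no proof of this statement and simply quotes Cauchy--Davenport from Tao--Vu as a separate black box. You instead derive it from Kneser's theorem (\cref{thm:kneser}), which the paper also quotes. The derivation is sound, and there is no circularity, since Kneser's theorem is quoted (and usually proved) independently of Cauchy--Davenport. The key observation is exactly right: the stabilizer of the sumset is a subgroup of $C_p$, hence either $\{0\}$, which gives the additive bound, or $C_p$, which gives the whole group.

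You can also drop the induction. The paper states Kneser in its iterated $t$-set form, so apply it directly to $X_1+\cdots+X_t$ with $H=\Stab(X_1+\cdots+X_t)\in\{\{0\},C_p\}$. This yields either $|X_1+\cdots+X_t|\ge\sum_i|X_i|-(t-1)$ or $X_1+\cdots+X_t=C_p$ in one step. Your route needs one black box instead of two. The paper's citation keeps an elementary fact independent of the much deeper Kneser theorem.

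Your ``alternative without Kneser'' is garbled as written and should not be relied on. The Davenport--Dyson transform replaces $(X_1,X_2)$ by $(X_1\cup(X_2+e),\,X_2\cap(X_1-e))$; this preserves $|X_1|+|X_2|$ and does not enlarge the sumset. One then needs an $e$ for which $X_2\cap(X_1-e)$ is non-empty and proper. Such an $e$ exists unless $X_1+(X_2-X_2)\subseteq X_1$. By primality that forces $X_1=C_p$, and primality enters exactly there.

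Your pair $(X_1\cup(X_1+X_2'),X_2')$ has two problems. It leaves $X_2'$ undefined. It also need not preserve the size sum: with $X_1=\{0\}$, $X_2=\{0,1\}$, $X_2'=\{0\}$, the total drops from $3$ to $2$. Your condition ``$0\in X_2$ and $X_1\not\supseteq X_1+X_2$'' is also not the one the transform needs. Since the Kneser route is complete, this does not affect the validity of your proof.
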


The following lemma is a key tool in our covering arguments.

\begin{lemma}\label{lem:quotient-selection}
Let $p$ be prime, let $\pi:G\to C_p$ be a quotient, and let $M\subset G$.
For $i\in C_p$, put $r_i=|M\cap\pi^{-1}(i)|$.  Given $\ell\ge1$, set
$S_\ell=\sum_{i\in C_p}\min\{r_i,\ell\}$.  If $S_\ell\ge p+\ell-1$, then
$\pi(\Sigma_\ell(M))=C_p$.
\end{lemma}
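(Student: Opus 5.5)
We reduce \cref{lem:quotient-selection} to Cauchy--Davenport (\cref{thm:CD}) in $C_p$ by building an $\ell$-fold sum of sets in $C_p$ whose summands come from pairwise distinct elements of $M$. The idea is to arrange the available residue classes, counted with multiplicity at most $\ell$, into $\ell$ "layers". For each $i\in C_p$ let $m_i=\min\{r_i,\ell\}$, and choose $m_i$ distinct elements of $M\cap\pi^{-1}(i)$. Then $S_\ell=\sum_i m_i$ elements in total are available. Define, for $1\le j\le\ell$,
\[
X_j=\{i\in C_p: m_i\ge j\}\subseteq C_p .
\]
Each class $i$ belongs to exactly $m_i$ of the sets $X_1,\dots,X_\ell$, so $\sum_j|X_j|=S_\ell$. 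Moreover, a sum $x_1+\cdots+x_\ell$ with $x_j\in X_j$ uses the class $i$ at most $m_i$ times, namely at most once in each $X_j$ containing $i$. We can therefore lift it to $\ell$ pairwise distinct elements of $M$: for each occurrence of class $i$ we use a different one of the $m_i$ chosen preimages. Hence $X_1+\cdots+X_\ell\subseteq\pi(\Sigma_\ell(M))$.

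Next we check nonemptiness. This holds automatically provided $X_\ell\neq\varnothing$, since $X_1\supseteq X_2\supseteq\cdots\supseteq X_\ell$. Empty sets are the one obstacle to applying \cref{thm:CD}. If $X_\ell=\varnothing$, then every $m_i\le\ell-1$, and a more careful split is needed; we handle this below. When all $X_j$ are nonempty, \cref{thm:CD} gives
\[
|X_1+\cdots+X_\ell|\ge\min\{p,\,S_\ell-(\ell-1)\}\ge\min\{p,p\}=p
\]
using $S_\ell\ge p+\ell-1$. So $\pi(\Sigma_\ell(M))=C_p$.

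The main step is the case where some layers are empty, so we redistribute instead of using the nested layers. We take any partition of the multiset $\{i^{(m_i)}\}$, of size $S_\ell\ge\ell$, into $\ell$ nonempty sets $X_1,\dots,X_\ell\subseteq C_p$, each containing any class at most once. Such a partition exists because $m_i\le\ell$ for all $i$ and the total is at least $\ell$. For example, list the multiset with equal classes consecutive and assign its entries cyclically to $X_1,\dots,X_\ell$. Since each class occurs at most $\ell$ times, its copies land in distinct sets, and since the total is at least $\ell$, every set receives something. This construction in fact handles both cases uniformly. The lifting argument is unchanged because each class $i$ appears in exactly $m_i$ distinct $X_j$, so \cref{thm:CD} again yields $|X_1+\cdots+X_\ell|\ge\min\{p,S_\ell-\ell+1\}=p$. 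I expect verifying this cyclic-assignment property to be the only point requiring care; everything else is direct.
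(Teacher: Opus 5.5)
Your proof is correct and is essentially the paper's argument: truncate each fibre to $\min\{r_i,\ell\}$ labelled copies and deal them cyclically into $\ell$ boxes, so that equal residues land in distinct boxes and every box is non-empty. Then apply Cauchy--Davenport and lift using the disjointness of the boxes; the preliminary nested-layer case is subsumed by the cyclic assignment, as you note, so it can be dropped.
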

\begin{proof}
Regard the elements of $M$ as labelled copies of their images in $C_p$.
For each residue $i$, retain $\min\{r_i,\ell\}$ copies.
Distribute the retained copies, one residue class at a time, cyclically among
$\ell$ boxes $X_1,\ldots,X_\ell$, using one continuous cycle through the boxes.
Since no residue is retained more than $\ell$ times, copies of the same
residue go to distinct boxes, and the box sizes differ by at most one.

After forgetting the labels, each $X_j$ is a subset of $C_p$, and
\[
\sum_{j=1}^{\ell}|X_j|
=\sum_{i\in C_p}\min\{r_i,\ell\}
=S_\ell\ge p+\ell-1.
\]
In particular, every $X_j$ is non-empty.  By \cref{thm:CD},
$X_1+\cdots+X_\ell=C_p$.
Since the boxes contain disjoint labelled elements of $M$, choosing one
element from each $X_j$ lifts to $\ell$ distinct elements of $M$.
Therefore
\[
C_p=X_1+\cdots+X_\ell
\subseteq\pi(\Sigma_\ell(M))\subseteq C_p,
\]
and hence $\pi(\Sigma_\ell(M))=C_p$.
\end{proof}

\Needspace{20\baselineskip}
\paragraph{A round-robin example.}
Take $p=7$ and $\ell=4$, and suppose that the fibre multiplicities over
$\bar0,\ldots,\bar5$ are $1,5,6,2,3,3$, with the remaining fibre empty.
After truncation at level $\ell$, the retained multiplicities are
$1,4,4,2,3,3$.  Starting at $X_1$, the distribution is
\[
\begin{array}{c@{\quad}c@{\quad}l@{\quad}c}
\toprule
\text{residue}&\text{copies}&\text{boxes receiving the copies}&\text{next box}\\
\midrule
\bar0&1&X_1&X_2\\
\bar1&4&X_2,X_3,X_4,X_1&X_2\\
\bar2&4&X_2,X_3,X_4,X_1&X_2\\
\bar3&2&X_2,X_3&X_4\\
\bar4&3&X_4,X_1,X_2&X_3\\
\bar5&3&X_3,X_4,X_1&X_2\\
\bottomrule
\end{array}
\]
Thus
\begin{align*}
X_1&=\{\bar0,\bar1,\bar2,\bar4,\bar5\},&
X_2&=\{\bar1,\bar2,\bar3,\bar4\},\\
X_3&=\{\bar1,\bar2,\bar3,\bar5\},&
X_4&=\{\bar1,\bar2,\bar4,\bar5\}.
\end{align*}

We shall use one two-fold covering fact directly from Gallardo et al.
\cite[Lemma~2.2]{GallardoEtAl}: if $K$ has odd order and $X\subset K$
satisfies $|X|>(|K|+1)/2$, then $\Sigma_2(X)=K$.  Indeed, for every $z\in K$,
$|X\cap(z-X)|\ge2|X|-|K|>1$, while $2x=z$ has a unique solution
in $K$.  Thus the intersection contains $x\ne z-x$, giving a restricted
pair of sum $z$.  This is precisely the odd-order specialization of their lemma.

The following prime-field restricted-sum theorem is due to Dias da Silva and Hamidoune \cite{DiasHamidoune}; we use the formulation in Alon--Nathanson--Ruzsa \cite[Theorem~3.3]{ANR}.

\begin{theorem}[Dias da Silva--Hamidoune]\label{thm:DSH}
Let $p$ be prime and let $A\subset C_p$. Then
\[
|\Sigma_k(A)|\ge\min\{p,k(|A|-k)+1\}
\]
for every $1\le k\le |A|$.
\end{theorem}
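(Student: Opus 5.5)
We would prove \cref{thm:DSH} with the polynomial method of Alon, Nathanson and Ruzsa, that is, via the Combinatorial Nullstellensatz in $\F_p=C_p$. To handle the $\min$ in the bound, we first prove a more flexible statement for nested subsets of prescribed distinct sizes. Then we choose the sizes so that the lower bound lands exactly on whichever term of the minimum is active.

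\textbf{Step 1 (general statement).} Let $A_1,\dots,A_k\subset\F_p$ have pairwise distinct sizes $b_1,\dots,b_k$, and put $m=\sum_i b_i-\binom{k+1}{2}$. We claim that if $m<p$, then the set
\[
\{a_1+\cdots+a_k:\ a_i\in A_i,\ a_i\ \text{pairwise distinct}\}
\]
has at least $m+1$ elements. Suppose instead that it is contained in a set $C$ with $|C|=m$. Consider
\[
f(x_1,\dots,x_k)=\prod_{i<j}(x_j-x_i)\prod_{c\in C}(x_1+\cdots+x_k-c).
\]
This polynomial vanishes on $A_1\times\cdots\times A_k$: the first factor kills tuples with a repeated coordinate, and the second kills tuples with distinct coordinates. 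Its degree is $\binom k2+m=\sum_i(b_i-1)$. By the Combinatorial Nullstellensatz, it therefore suffices to show that the coefficient of $\prod_i x_i^{b_i-1}$ in $f$ is nonzero in $\F_p$.

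\textbf{Step 2 (the coefficient).} This is the main obstacle. The coefficient equals the coefficient of $\prod x_i^{b_i-1}$ in $\prod_{i<j}(x_j-x_i)(x_1+\cdots+x_k)^m$. Expand the Vandermonde factor as $\sum_{\sigma}\operatorname{sgn}(\sigma)\prod_i x_i^{\sigma(i)-1}$, and the power of the linear form by the multinomial theorem. The coefficient then becomes
\[
m!\,\det\!\Bigl(\frac{1}{(b_i-j)!}\Bigr)_{i,j},
\]
with the convention $1/(-r)!=0$ for $r>0$. Factoring out $\prod_i 1/(b_i-1)!$ turns the matrix into one whose entries are falling factorials $(b_i-1)(b_i-2)\cdots$. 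By column operations this reduces to the Vandermonde determinant in the $b_i$, giving
\[
\frac{m!}{\prod_i (b_i-1)!}\prod_{i<j}(b_j-b_i).
\]
This is the ballot-type identity, which we would verify carefully by induction on $k$ if needed. Every factor is a product of integers smaller than $p$: we have $m<p$, and $0<|b_j-b_i|<p$ since all $b_i\le p$. Hence the coefficient is nonzero mod $p$.

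\textbf{Step 3 (deducing the theorem).} Put $n=|A|$, and take nested subsets $A_i\subset A$ of distinct sizes. With $b_i=n-i+1$ for $i=1,\dots,k$, we get $m=kn-\binom k2-\binom{k+1}2=k(n-k)$.

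\emph{Case $k(n-k)+1\le p$.} Take $A_i=A$ for all $i$, noting that only distinctness of the chosen elements matters. Step 1 gives $|\Sigma_k(A)|\ge k(n-k)+1$.

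\emph{Case $k(n-k)+1>p$.} Decrease the sizes $b_i$ one unit at a time, keeping them distinct and at least $1$, starting from $b_1$ downward through the configuration $(k,k-1,\dots,1)$ where $m=0$. This lets $m$ pass through every integer between $0$ and $k(n-k)$, and in particular through $m=p-1$. Taking nested subsets $A_i\subset A$ of those sizes, Step 1 yields at least $p$ restricted sums, all lying in $\Sigma_k(A)$. Hence $\Sigma_k(A)=C_p$.

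In both cases $|\Sigma_k(A)|\ge\min\{p,k(|A|-k)+1\}$, as claimed.
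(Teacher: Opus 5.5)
Your argument is the Alon--Nathanson--Ruzsa polynomial-method proof. That is exactly the formulation the paper relies on: the paper gives no proof of its own and cites Dias da Silva--Hamidoune in the form of Alon--Nathanson--Ruzsa, Theorem~3.3. Steps~1 and~2 are correct. The coefficient identity is the ANR lemma. The integer $\frac{m!}{\prod_i(b_i-1)!}\prod_{i<j}(b_j-b_i)$ has $p$-adic valuation zero, because $m<p$, $b_i-1<p$ and $0<|b_j-b_i|<p$.

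There is, however, a slip in Step~3 that breaks the first case as written. Taking $A_i=A$ for all $i$ gives $b_1=\cdots=b_k=n$, which violates the distinct-sizes hypothesis of Step~1. The factor $\prod_{i<j}(b_j-b_i)$ in your coefficient then vanishes, so the Nullstellensatz gives nothing. The corresponding $m$ would also be $kn-\binom{k+1}{2}$, not $k(n-k)$. Your remark that only distinctness of the chosen elements matters explains why $\Sigma_k(A)$ is the restricted sumset of $(A,\dots,A)$. It does not let you apply Step~1 to that tuple.

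The repair is the one you set up in the preceding sentence. Take any $A_i\subseteq A$ with $|A_i|=n-i+1$; nesting is not needed. The restricted sumset of $(A_1,\dots,A_k)$ is contained in $\Sigma_k(A)$, and $m=k(n-k)<p$, so Step~1 gives $|\Sigma_k(A)|\ge k(n-k)+1$.

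A smaller point concerns the second case. Lowering $b_1=n$ first collides with $b_2=n-1$, so you must lower the smallest size first, e.g.\ shift the block down by lowering $b_k,b_{k-1},\dots,b_1$ in turn. Alternatively, observe that sums of $k$ distinct integers in $[1,n]$ fill the whole interval $[\binom{k+1}{2},\,kn-\binom{k}{2}]$, so distinct sizes with $m=p-1$ exist directly. With these corrections the proof is complete.
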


We shall use two theorems of Lev. The first is the two-fold restricted-sum estimate from \cite[Theorem~3]{LevI}.

\begin{theorem}[Lev's two-fold estimate]\label{thm:lev-two}
Let $A$ be a finite subset of an abelian group $G$. If $\Sigma_2(A)\ne A+A$, then $|\Sigma_2(A)|>(1+\sqrt5)|A|/2-(|G[2]|+2)$.
\end{theorem}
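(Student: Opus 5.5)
Since this is Lev's theorem quoted from \cite[Theorem~3]{LevI}, the paper does not reprove it; the following is how I would reconstruct a proof. Write $S=\Sigma_2(A)$ and $\phi=(1+\sqrt5)/2$. The basic identity is $A+A=S\cup 2\cdot A$, where $2\cdot A=\{2a:a\in A\}$. So the hypothesis $S\ne A+A$ produces a nonempty set of ``bad'' elements $B=\{a\in A:2a\notin S\}$. For $a\in B$, every representation $2a=b+c$ with $b,c\in A$ has $b=c$, so
\[
A\cap(2a-A)\subseteq a+G[2].
\]
Thus $A$ and its reflection $2a-A$ overlap in at most $|G[2]|$ points. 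On the other hand, every such $a$ gives a translate $(a+A)\setminus\{2a\}\subseteq S$, so $|S|\ge |A|-1$ immediately. The aim is to upgrade this to the golden-ratio bound by playing these two facts against each other.

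\textbf{Step 1.} I would fix $a_0\in B$ and split $A$ according to whether $a_0+x$ lies in the ``new'' part of $S$. Concretely, $S\supseteq (a_0+A\setminus\{a_0\})\cup\bigl(x+A\setminus\{x\}\bigr)$ for any further $x\in A$. The near-disjointness of $A$ and $2a_0-A$ forces the translates $a_0+A$ and $x+A$ to differ substantially, unless $x-a_0$ almost stabilizes $A$.

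\textbf{Step 2.} I would set $|S|=\lambda|A|$ and assume $\lambda<\phi-(|G[2]|+2)/|A|$. Applying Kneser (\cref{thm:kneser}) to $A+A$, with $H=\Stab(A+A)$, gives
\[
|S|+|B|\ge|A+A|\ge 2|A+H|-|H|.
\]
If $H$ is large, then $A$ is essentially a union of $H$-cosets. In that case $2a_0\in a_0+A+H$ is represented with distinct summands unless $|H|\le|G[2]|+1$, which contradicts $a_0\in B$. So $H$ is small, and we get $|A+A|\gtrsim 2|A|$.

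\textbf{Step 3.} Next I would bound $|B|$ in terms of $|S|$. Distinct bad elements with distinct doubles give distinct points of $(A+A)\setminus S$. Counting representations, each $s\in S$ has about $|A|-\text{(small)}$ representations on average, while each $2a$ with $a\in B$ has at most $|G[2]|$. A Cauchy--Schwarz or energy count over $A\times A$ then gives an inequality of the form
\[
|B|\,|S|\lesssim |S|^2-|A|\,|S|,
\]
i.e.\ $|B|\le (\lambda-1)|A|$ up to $O(|G[2]|)$.

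\textbf{Step 4.} Combining Steps 2 and 3 gives $2\lesssim\lambda+(\lambda-1)/\lambda\cdot(\ldots)$. After tidying, this should reduce to the quadratic $\lambda^2\ge\lambda+1$, which is exactly where $\phi$ enters. Tracking the additive losses ($|G[2]|$ from the solutions of $2x=2a$, and $2$ from the excluded diagonal terms) would give the stated constant $|G[2]|+2$.

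The main obstacle is Step 3. It needs a clean inequality linking the number of bad doubles to the deficiency $|S|-|A|$ with the right constant, without losing a factor in the energy count. If the direct count fails, I would instead iterate the translate argument of Step 1 over a maximal chain of bad elements. Each new bad element should add at least a $1/\lambda$ proportion of new points to $S$, which again yields the Fibonacci-type recursion whose growth rate is $\phi$.
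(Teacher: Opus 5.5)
The paper does not prove this statement; it quotes it from Lev \cite[Theorem~3]{LevI}. As a standalone reconstruction, your proposal has a genuine gap, and the plan as written cannot reach $\phi=(1+\sqrt5)/2$. Step~3 carries all the weight and is not proved. Its heuristic is off: the average number of representations of an element of $S$ is about $|A|^2/|S|=|A|/\lambda$, not $|A|-O(1)$. Cauchy--Schwarz bounds additive energy only from below, so nothing in the sketch gives an upper bound on $|B|$.

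More decisively, the inequality you display, $|B|\lesssim|S|-|A|$, is too weak. Combined with Step~2 it gives $\lambda|A|+(\lambda-1)|A|\gtrsim2|A|$, i.e.\ $\lambda\gtrsim3/2$, and $3/2<\phi$. The quadratic $\lambda^2\ge\lambda+1$ would need the sharper bound $|2\cdot B|\le|A|(|S|-|A|)/|S|+O(|G[2]|)$; that is what the unexplained ``$(\ldots)$'' in Step~4 has to supply. Nothing in Steps~1--3 comes close to proving it. The fallback, where each new bad element adds a $1/\lambda$ proportion of new points, is also asserted with no mechanism.

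The ingredients you do have yield exactly $3/2$. Translate so that $a_0=0$. Then $A\cap(-A)\subseteq G[2]$ and $S\supseteq(A\setminus\{0\})\cup\bigl((x+A)\setminus\{2x\}\bigr)$, so $|S|\ge2|A|-2-|A\cap(A+x)|$ for every $x\in A$. Since $|A\cap(A+x)|=|A\cap(A-x)|$ and $A,-A$ overlap in at most $|G[2]|$ points, $2\sum_{x\in A}|A\cap(A+x)|\le|A|^2+|G[2]|\,|A|$. Some $x$ therefore gives $|S|\ge\frac32|A|-\frac{|G[2]|}{2}-2$.

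Step~2's conclusion is true, but your reasoning for it fails. Kneser's bound is vacuous when $A+A=G$, and $H$ need not be small: for $A=\{0,1,\ldots,(p-1)/2\}\subset C_p$ one has $0\in B$ but $\Stab(A+A)=C_p$. A correct argument: $x\mapsto2a_0-x$ maps $a_0+H$ to itself, so $|A\cap(a_0+H)|\le(|H|+|G[2]|)/2$. Hence $|A+H|\ge|A|+(|H|-|G[2]|)/2$, and Kneser gives $|A+A|\ge2|A|-|G[2]|$ for $H=\Stab(A+A)$. Getting from $3/2$ to $\phi$ needs a second-order argument that the proposal does not contain. As it stands the proposal does not prove the theorem, and citing Lev, as the paper does, is the appropriate course.
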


The second is Lev's three-fold restricted addition theorem \cite[Main Theorem]{LevIII}.

\begin{theorem}[Lev's three-fold theorem]\label{thm:lev-three}
Let $G$ be a finite abelian group and let $A\subset G$. If $|A|>\max\{5|G|/13,120|G[2]|+355\}$, then either $\Sigma_3(A)=G$, or $A$ is contained in a coset of an index-two subgroup, or $A$ is contained in a union of two cosets of an index-five subgroup.
\end{theorem}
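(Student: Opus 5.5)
Since \cref{thm:lev-three} is quoted from \cite{LevIII}, what follows is only an outline of how I would expect a proof to go; the main obstacle, the bound on uniquely represented elements, is flagged but not resolved.

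I would argue by contradiction. Suppose $g\notin\Sigma_3(A)$ and that $A$ is neither inside a coset of an index-two subgroup nor inside two cosets of an index-five subgroup. The basic observation is this. For $a\in A$, put $x=g-a$. If $x$ has a representation $x=b+c$ with $b\ne c$, $b,c\in A$ and $a\notin\{b,c\}$, then $g=a+b+c\in\Sigma_3(A)$. So $g-a$ is either outside $\Sigma_2(A)$, or it lies in $\Sigma_2(A)$ and every restricted representation of it uses $a$. For a fixed $x$ and $a$ there is at most one restricted pair containing $a$, namely $\{a,x-a\}$. Hence every $a\in A$ with $g-a\in\Sigma_2(A)$ makes $x=g-a$ an element with \emph{exactly one} restricted representation, and that representation contains $a$. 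Writing $U$ for the set of uniquely represented elements of $\Sigma_2(A)$, we get
\[
|A\cap(g-\Sigma_2(A))|\le|U|,\qquad\text{so}\qquad |\Sigma_2(A)|+|A|-|G|\le|U|.
\]

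The second step bounds $|\Sigma_2(A)|$ from below and $|U|$ from above, to contradict this inequality. For the lower bound there are two cases. If $\Sigma_2(A)\ne A+A$, I would use \cref{thm:lev-two}, which gives $|\Sigma_2(A)|>1.618|A|-|G[2]|-2$. If $\Sigma_2(A)=A+A$, I would use \cref{thm:kneser} with $H=\Stab(A+A)$, which gives $|A+A|\ge2|A+H|-|H|$. For the upper bound, note that an element $x=b+c$ with a unique representation has $r_{A+A}(x)\le 2+[x\in 2\cdot A]$. A standard energy/popular-sums argument should then show that very few elements of $A+A$ have so few representations unless $A+A$ is nearly periodic. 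With $|A|>5|G|/13$ and the additive term $120|G[2]|+355$ absorbing the $2$-torsion and $O(1)$ losses, these bounds should contradict the displayed inequality unless $H$ is a proper subgroup and $A$ is concentrated on few $H$-cosets.

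The third step is the structured case. Pass to $\bar G=G/H$ and let $\bar A$ be the set of cosets met by $A$. Kneser equality forces $|\bar A+\bar A|=2|\bar A|-1$, so $\bar A$ is an arithmetic progression in a cyclic quotient. The density $|A|>5|G|/13$ then allows only few configurations. I would check each one directly, using \cref{lem:quotient-selection} to realise every coset of $3\bar A$ by distinct elements, and \cref{thm:DSH} inside fibres to fill the cosets. The cases that survive should be exactly the obstructions in the statement: $|\bar G|=2$ with $\bar A$ a single coset, and $|\bar G|=5$ with $\bar A=\{0,1\}$, where $3\bar A=\{0,1,2,3\}\ne C_5$. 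In every other case $3\bar A=\bar G$ and each fibre sum is full, so $g\in\Sigma_3(A)$.

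The step I expect to be hardest is the bound on $|U|$: controlling elements with a unique restricted representation precisely enough to reach the constant $5/13$. I would first attempt it by splitting $U$ according to whether $x\in 2\cdot A$, and bounding each part by a Kneser argument on $A\cap(x-A)$.
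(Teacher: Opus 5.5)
You have a genuine gap: the key step is left open, and the route you propose for it cannot close it.

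\textbf{Your first step is correct, and it is exactly where Lev's argument begins.} The paper does not prove \cref{thm:lev-three}; it cites Lev, and reproduces his argument for odd order in the proof of \cref{prop:three-fold}. Since $\Sigma_2(A)$ is the disjoint union of $U$ and $P=\{x:\nu(x)>2\}$, your inequality $|\Sigma_2(A)|+|A|-|G|\le|U|$ is literally $|P|\le|G|-|A|$, which is \eqref{eq:three-P-upper}.

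\textbf{What follows it fails.} You leave the bound on $|U|$ open, and bounding $|\Sigma_2(A)|$ from below and $|U|$ from above cannot produce the contradiction. Take $G=\Z/N\Z$ with $5\mid N$ large, $K$ its index-five subgroup, and $A=K\cup(1+K)\cup\{x\}$ with $x\in2+K$. Then $|A|=2N/5+1$, $A$ is of neither exceptional type, and $\Sigma_3(A)=G$.
\begin{itemize}
\item Because $2x\in(A+A)\setminus\Sigma_2(A)$, you are in the branch that uses \cref{thm:lev-two}.
\item Because $\Stab(A+A)$ is trivial, your structured branch is never entered.
\item Every element of $3+K$ has the single restricted representation $x+y$ with $y\in1+K$, so $U=3+K$ has $N/5$ elements.
\item The two-fold estimate only gives $|\Sigma_2(A)|+|A|-N\gtrsim(\varphi+1)\tfrac{2N}{5}-N\approx0.047N$, far below $|U|$.
\item Even with the exact value $|\Sigma_2(A)|=4N/5$, the margin is a single element: $P=K\cup(1+K)\cup(2+K)$ gives $|P|=|G|-|A|+1$.
\end{itemize}
So poorly represented sums need not be rare in non-exceptional configurations. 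The contradiction must come from a lower bound for $|P|$ that is exact up to $O(1)$ in such cases, and nothing in your outline supplies one.

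\textbf{The missing idea is to use the missing sum a second time, to control $A-A$ rather than $A+A$.}
\begin{enumerate}
\item If $g\notin\Sigma_3(A)$, then for each $a\in A$ the set $(a-g+A)\cap(-A)$ has at most $2+|G[2]|$ elements.
\item Comparing the translates for $a$ and $b$ gives $|A\cap(A-d)|\ge3|A|-|G|-O(|G[2]|)$ for every $d=a-b\in A-A$.
\item Averaging $\nu$ over $P$ produces $\sigma$ with $\nu(\sigma)>|G|-2|A|+O(|G[2]|)$. To leading order this needs $|A|/|G|>(3-\sqrt5)/2\approx0.382$, which $5/13$ just clears.
\item The pairs $(u+d,\sigma-u)$, with $u\in A\cap(A-d)$ and $\sigma-u\in A$, then give $\sigma+(A-A)\subseteq P$. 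Hence $|A-A|\le|G|-|A|<8|G|/13$.
\item Kneser's theorem for $A-A$ yields a stabilizer $H$ with $|H|\ge3|A|-|G|>2|G|/13$, so $[G:H]\le6$.
\item The bound $|A-A|\ge2|A+H|-|H|$ then rules out every configuration except $A$ in one coset of an index-two subgroup, or in two cosets of an index-five subgroup.
\end{enumerate}

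\textbf{This route replaces your third step, which would not hold up as written.}
\begin{itemize}
\item Kneser gives only $|\bar A+\bar A|\ge2|\bar A|-1$, not equality.
\item Near-critical sets in a general quotient need not be arithmetic progressions in a cyclic group.
\item \cref{lem:quotient-selection} and \cref{thm:DSH} apply only to prime cyclic quotients and groups, not to arbitrary fibres.
\item The bound $r_{A+A}(x)\le2+[x\in2\cdot A]$ fails when $|G[2]|>1$, since the diagonal term can be as large as $|G[2]|$.
\end{itemize}
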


Finally we use the arithmetic removal lemma for finite abelian groups. Green's work \cite{Green} gives a removal lemma for single equations in abelian groups, and the form below follows from Kr\'al--Serra--Vena \cite[Theorem~1]{KSV}. Their extra condition is automatically satisfied for the equation $x_1+\cdots+x_s=g$.

\begin{lemma}[Arithmetic removal]\label{lem:removal}
Fix $s\ge3$ and $\eta>0$. There exists $\rho_s(\eta)>0$ such that the following holds for every finite abelian group $G$ of order $N$. If the equation $x_1+\cdots+x_s=g$, with $x_i\in X_i$, has at most $\rho_s(\eta)N^{s-1}$ solutions in $X_1\times\cdots\times X_s$, then one can delete at most $\eta N$ elements from each $X_i$ so that no solution remains.
\end{lemma}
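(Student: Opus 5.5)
The plan is to deduce the statement from the Král--Serra--Vena removal lemma for linear systems \cite[Theorem~1]{KSV} by specializing it to a single linear equation. First I would make the equation homogeneous. Replace $X_1$ by $X_1'=X_1-g$ and keep $X_i'=X_i$ for $i\ge2$. The map $(x_1,\ldots,x_s)\mapsto(x_1-g,x_2,\ldots,x_s)$ is then a bijection from solutions of $x_1+\cdots+x_s=g$ in $X_1\times\cdots\times X_s$ onto solutions of $x_1+\cdots+x_s=0$ in $X_1'\times\cdots\times X_s'$. Deleting a set $R_1'$ from $X_1'$ corresponds to deleting $R_1'+g$ from $X_1$, and this has the same size. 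So it suffices to treat $g=0$.

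Next I would check the hypotheses of \cite[Theorem~1]{KSV} for the $1\times s$ coefficient matrix $(1,1,\ldots,1)$ over $G$. That theorem concerns a $k\times m$ integer system $Ax=0$ over a finite abelian group of order $N$. Its extra condition concerns the determinants of the $k\times k$ minors of $A$: roughly, they must generate an ideal coprime to $N$, or some minor must be invertible mod $N$. Here $k=1$ and every $1\times1$ minor equals $1$, which is a unit modulo every $N$, so the condition holds automatically. The solution set of the homogeneous equation is a subgroup of $G^s$ of order $N^{m-k}=N^{s-1}$, which matches the normalization $\rho_s(\eta)N^{s-1}$ in the statement.

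The conclusion of \cite[Theorem~1]{KSV} reads: for every $\eta>0$ there is $\delta=\delta(\eta,s)>0$, uniform in $G$, such that if there are at most $\delta N^{s-1}$ solutions, then removing at most $\eta N$ elements from each $X_i$ leaves no solutions. Setting $\rho_s(\eta)=\delta(\eta,s)$ and undoing the translation gives the lemma. Since $\delta$ depends only on $\eta$ and the matrix, which is determined by $s$, the constant is uniform over all finite abelian groups.

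The main obstacle is only bookkeeping: matching exactly the form of the hypothesis in \cite{KSV}, in particular the determinantal condition and their solution-count normalization. If their theorem is stated only for homogeneous systems or for a single set $X$ rather than $X_1,\ldots,X_s$, I would use their multi-set (colored) version, or apply the translation trick above. Alternatively, for one equation I could cite Green's removal lemma \cite{Green} directly.
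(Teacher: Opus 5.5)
Your proposal is correct and is essentially the paper's own argument: the paper also obtains the lemma by specializing Kr\'al--Serra--Vena's Theorem~1 to the single equation with coefficient row $(1,\ldots,1)$. Like you, it notes that their determinantal condition is then automatic and that the constant depends only on $s$ and $\eta$. Your translation to $g=0$ is harmless extra bookkeeping that does not change the argument.
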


\begin{theorem}[Roth--Lempel]\label{thm:roth-lempel}
Let $G$ be a finite abelian group of even order $N\ge12$, and let $A\subset G$ with $m=|A|>N/2$.
\begin{enumerate}[label=\textup{(\alph*)},leftmargin=2.5em]
\item If $G$ is not isomorphic to $C_2^r$ or $C_4\oplus C_2^{r-1}$ for any $r\ge1$, then $\Sigma_k(A)=G$ for $3\le k\le m-3$.
\item For every $G$, one has $\Sigma_k(A)=G$ for $4\le k\le m-4$.
\end{enumerate}
\end{theorem}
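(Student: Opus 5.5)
Since this result is quoted from Roth and Lempel \cite[Theorem~3.1 and Remark~3.1]{RothLempel}, the paper relies on the reference rather than reproving it. If I had to prove it, I would proceed as follows.

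\emph{Reductions.} By \cref{lem:symmetry}(i) it suffices to treat $k\le m/2$. By \cref{lem:symmetry}(ii) it suffices to take $m=N/2+1$, the least admissible size. Next I would establish a basic two-fold fact. For $z\in G$ the set $A\cap(z-A)$ has size at least $2m-N\ge2$. It is invariant under $x\mapsto z-x$, whose fixed points are the solutions of $2x=z$; these form either the empty set or a coset of $G[2]$. Hence $z\notin\Sigma_2(A)$ forces $A\cap(z-A)\subseteq x_0+G[2]$. In particular $z$ is a doubled element and $A$ meets $x_0+G[2]$ in at least $N/2+1-\text{(small)}$ points. So the failures of $\Sigma_2$ are controlled by $G[2]$.

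\emph{Lengths $k\ge3$.} I would write $\Sigma_k(A)\supseteq\sigma_F+\Sigma_2(A\setminus F)$ for $(k-2)$-subsets $F\subset A$ and exploit the freedom in $F$. Fix a target $g$. A failure for every $F$ means that each $g-\sigma_F$ is a bad point for $A\setminus F$. By the previous step, $g-\sigma_F\in 2G$ and $A\setminus F$ is concentrated on a single $G[2]$-coset, of size close to $N/2$. The sets $A\setminus F$ differ only by exchanging a few elements, so I would swap one element of $F$ with one of $A\setminus F$. This moves $\sigma_F$ by a difference $a-a'$ of elements of $A$, and the set of such differences is all of $G$ because $|A|>N/2$. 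Consequently some swap produces $g-\sigma_F\notin 2G$, or moves it to a point where the two-torsion obstruction no longer applies.

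\emph{Alternative route for the non-exceptional groups.} Lev's estimate (\cref{thm:lev-two}) gives a complementary argument. Since $A+A=G$ whenever $|A|>N/2$, either $\Sigma_2(A')$ is large for $A'=A\setminus F$, or $|\Sigma_2(A')|>1.618|A'|-|G[2]|-2$. When $|G[2]|$ is small compared with $N$, we get $|\Sigma_2(A')|+|A\setminus F'|>N$ and the pigeonhole principle finishes $k=3$. This is exactly where the exceptional groups $C_2^r$ and $C_4\oplus C_2^{r-1}$ arise, since there $G[2]$ has index at most two.

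\emph{Main obstacle and part (b).} The main obstacle is part (b) for these $2$-groups of large two-torsion. There I would use $k\ge4$ through the decomposition $\Sigma_4(A)\supseteq\Sigma_2(A_1)+\Sigma_2(A_2)$ for a partition $A=A_1\sqcup A_2$. The partition would be chosen so that each $\Sigma_2(A_i)$ is a large union of cosets of a subgroup. Kneser's theorem (\cref{thm:kneser}) then makes the sum all of $G$ once the stabilizer case analysis is carried out. Finally I would handle small $N$, down to the bound $N\ge12$, by direct inspection.
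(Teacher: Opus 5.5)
There is a genuine gap. Your reductions match what the paper actually does. Its proof is just \cref{lem:symmetry}(ii) with $n=N/2+1$ (and $u=3$, resp.\ $u=4$), followed by a citation of Roth and Lempel for sets of exactly that size. One correction is needed even there: \cref{lem:symmetry}(ii) requires $n\ge2u$, so for part (b) with $N=12$ you cannot take $n=7$. The paper observes that $m=7$ gives an empty range and uses $n=8$, $u=4$ when $m\ge8$.

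The self-contained argument you propose for the core case does not work. Your two-fold fact rests on $|A\cap(z-A)|\ge2|A|-N$. For $|A|=N/2+1$ this forces only two points of $A$ in $x_0+G[2]$, not about $N/2$ (in $\Z/N\Z$ take $A=\{0,1,\dots,N/2\}$ and $z=0$). Worse, you apply it to $A\setminus F$, which has $N/2+3-k\le N/2$ elements once $k\ge3$. Then $2|A\setminus F|-N\le0$ and nothing follows. A missing two-sum of $A\setminus F$ need not lie in $2G$: in $\Z/N\Z$, the set $\{0,\dots,N/2-1\}$ misses the odd sum $N-1$. Nor is there any concentration on a $G[2]$-coset, so the swapping step has nothing to act on.

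The swapping step also never uses the exclusion of $C_2^r$ and $C_4\oplus C_2^{r-1}$, although (a) really fails there. If $H$ is an elementary abelian subgroup of index two and $y\notin H$, then $\Sigma_2(H)=H\setminus\{0\}$. Hence $\Sigma_3(H\cup\{y\})=\Sigma_3(H)\cup\bigl((y+H)\setminus\{y\}\bigr)$ misses $y$. A correct argument for $k=3$ has to use the group structure at an identifiable step.

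The Lev route does not close the gap. Adding $\varphi|A|-|G[2]|-2$ to $|A|$ exceeds $N$ only when roughly $|G[2]|<0.31N$. This fails for non-exceptional groups such as $C_2^r\oplus C_3$ with $r$ large, where $|G[2]|=N/3$, so this is not where the exceptional list comes from. Moreover, $|\Sigma_2(A)|+|A|>N$ only gives $g=x+a$ with $x\in\Sigma_2(A)$ and $a\in A$, and $a$ may belong to every pair representing $x$. You would need a multiplicity count, as in the representation-counting principle of \cref{sec:small-sums}, to obtain a restricted three-sum.

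Finally, part (b) for the exceptional $2$-groups is only a plan. You do not say how to choose $A=A_1\sqcup A_2$ so that each $\Sigma_2(A_i)$ is a large union of cosets, nor how the Kneser stabilizer analysis terminates. As written, the case $|A|=N/2+1$ remains unproved, and that is exactly the input the paper takes from Roth and Lempel rather than proving.
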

\begin{proof}
For part~\textup{(a)}, apply \cref{lem:symmetry}(ii) with $n=N/2+1$ and $u=3$, and then use Roth and Lempel \cite[Theorem~3.1 and Remark~3.1]{RothLempel}. For part~\textup{(b)}, if $N=12$, the case $m=7$ is empty, while for $m\ge8$ the same theorem gives $\Sigma_4(B)=G$ for every $8$-element subset $B$, so \cref{lem:symmetry}(ii) with $n=8$ and $u=4$ applies. If $N\ge14$, apply \cref{lem:symmetry}(ii) with $n=N/2+1$ and $u=4$, and again use Roth--Lempel.
\end{proof}

\section{Small restricted sums}\label{sec:small-sums}
The proofs in this section use one simple representation-counting device. Its basic idea comes from Lev's proof of the three-fold restricted addition theorem \cite{LevIII}: a missing restricted sum forces strong restrictions on the two-fold representation function, while a sum with sufficiently many representations can be chosen to avoid previously selected elements.

\paragraph{A representation-counting principle.}
For $A\subset G$, put $m=|A|$ and define, for $x\in G$,
\[
\nu(x)=|\{(a,b)\in A^2:a\ne b,\ a+b=x\}|.
\]
The involution $(a,b)\mapsto(b,a)$ has no fixed point on the ordered restricted representations of any $x$. Consequently $\nu(x)$ is even. For $j\ge1$ set
\[
P_j=P_j(A):=\{x\in G:\nu(x)>2j\}.
\]
Then
\[
\sum_{x\in G}\nu(x)=m(m-1),\qquad \nu(x)\le M_A:=2\floor{m/2}.
\]
Moreover,
\begin{equation}\label{eq:representation-inclusion}
P_j+\Sigma_j(A)\subset \Sigma_{j+2}(A).
\end{equation}
Indeed, fix $x\in P_j$ and a restricted $j$-term representation $y=a_1+\cdots+a_j$. A prescribed element $a_i$ occurs in at most two ordered representations of $x$. Hence the $j$ forbidden elements exclude at most $2j$ ordered representations, while $\nu(x)>2j$. One representation $x=u+v$ avoids all $a_i$, and then $x+y$ is a restricted $(j+2)$-term sum.

Whenever $M_A>2j$, splitting the representation count on $P_j$ and its complement gives
\begin{equation}\label{eq:representation-count}
m(m-1)\le2j(N-|P_j|)+M_A|P_j|,\qquad
|P_j|\ge \frac{m(m-1)-2jN}{M_A-2j}.
\end{equation}
We use the same principle for the three-, four-, and five-fold endpoints.

The $2/5$ threshold in part~\textup{(a)} was proved by Lev \cite[Main Theorem]{LevIII}. We include a short proof here, based on his argument.

\Needspace{12\baselineskip}
\begin{proposition}\label{prop:three-fold}
Let $G$ be a finite abelian group of odd order $N$, and let $A\subset G$.
\begin{enumerate}[label=\textup{(\alph*)},leftmargin=2.5em]
\item If $N\ge149$ and $|A|>2N/5$, then $\Sigma_3(A)=G$.
\item If $N\ge17$ and $|A|>N/2$, then $\Sigma_3(A)=G$.
\end{enumerate}
\end{proposition}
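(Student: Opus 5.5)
Part~(a) follows from \cref{thm:lev-three}, and part~(b) comes from the representation-counting inclusion \eqref{eq:representation-inclusion} with $j=1$.

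\emph{Part~(a).} Since $N$ is odd, $|G[2]|=1$, so the hypothesis of \cref{thm:lev-three} reads $|A|>\max\{5N/13,475\}$. We have $2N/5>5N/13$, and $2N/5>475$ once $N>1187$. In that range, \cref{thm:lev-three} gives either $\Sigma_3(A)=G$ or one of two exceptional structures. An index-two subgroup does not exist in odd order. An index-five subgroup $K$ has $|A|\le 2|K|=2N/5$ when $A$ lies in two of its cosets, contradicting $|A|>2N/5$. So $\Sigma_3(A)=G$.

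For $149\le N\le 1187$, the constant $475$ is too large and \cref{thm:lev-three} does not apply directly. Here I would run the counting argument directly. Fix $g\in G$ with $g\notin\Sigma_3(A)$. By \eqref{eq:representation-inclusion} with $j=1$, $P_1\cap(g-A)=\varnothing$, so $|P_1|\le N-m$. Combining this with \eqref{eq:representation-count}, using $M_A\le m$, gives
\[
m(m-1)\le 2(N-|P_1|)+m|P_1|,
\]
hence $m(m-1)\le 2m+(m-2)(N-m)$. With $m>2N/5$, this should yield a contradiction for $N$ in this range, or at least confine $m$ to a narrow window. A window that remains would be excluded using Kneser's theorem (\cref{thm:kneser}) on $A+A$ together with the fact that elements $x$ with $\nu(x)\le 2$ have essentially a unique pair representation. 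I expect closing this small-$N$ window to be the main obstacle; if the crude count fails, I would refine it by noting that $g-a$ has $\nu(g-a)\le 2$ for every $a\in A$, and count the pairs $(a,b)$ with $a+b\in g-A$.

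\emph{Part~(b).} Suppose $m>N/2$ and $g\notin\Sigma_3(A)$. Again $P_1\cap(g-A)=\varnothing$, so $|P_1|\le N-m<m$. The count $m(m-1)\le 2N+(m-2)|P_1|$ then gives
\[
m(m-1)\le 2N+(m-2)(N-m).
\]
With $N<2m$, the right side is at most $4m+(m-2)(m-1)$, so $m(m-1)<4m+(m-2)(m-1)$, i.e.\ $2(m-1)<4m$. This is no contradiction, so the crude count is insufficient and a sharper count is needed. I would use that $\nu(g-a)\le 2$ for all $a\in A$, and count the $m(m-1)$ ordered pairs by sum classes more carefully. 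Alternatively, I would use the two-fold fact from Gallardo et al.: since $|A\setminus\{a\}|>(N+1)/2$ for a suitable $a$ when $m\ge (N+3)/2+1$, we get $\Sigma_2(A\setminus\{a\})=G$, so $g-a$ is a restricted pair sum avoiding $a$, giving $g\in\Sigma_3(A)$. The leftover cases $m=(N+1)/2$ and $m=(N+3)/2$ need more care. For these I would choose $a$ such that $g-2a\notin A$, or such that the intersection $(A\setminus\{a\})\cap(g-a-(A\setminus\{a\}))$ contains a genuine pair $\{x,y\}$ with $x\ne y$ and $x,y\ne a$. Its size is at least $2(m-1)-N\ge 0$, so for the worst case I would average over $a$, using that $N\ge17$ leaves enough choices. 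I expect the case $m=(N+1)/2$ to be the crux of part~(b).
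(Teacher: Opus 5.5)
Your use of \cref{thm:lev-three} for $N>1187$ is correct. Odd order rules out the index-two case, and two cosets of an index-five subgroup hold only $2N/5$ elements; the paper itself notes that (a) is Lev's theorem. Everything else is left open, however, and the step you call the obstacle is the whole content of the statement.

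The crude count $m(m-1)\le 2N+(m-2)|P_1|$ with $|P_1|\le N-m$ has right side $2m+m(N-m)$, not $2m+(m-2)(N-m)$ as you wrote in (a). It is equivalent to $2m\le N+3$. So it excludes only $m\ge (N+5)/2$, which is exactly the range your Gallardo et al.\ argument handles in (b). It says nothing on $2N/5<m\le (N+3)/2$, which is not a narrow window.

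Passing to a subset reduces (b) to $m=(N+1)/2$, so the cases you do settle are irrelevant. At $m=(N+1)/2$ your intersection bound $2(m-1)-N$ equals $-1$, not $0$. Averaging over $a$ does not rescue this. The trivial lower bound for the number of solutions of $a+x+y=g$ in $A^3$ is $m(2m-N)=m$, while solutions with a repeated entry can number up to $3m$. Kneser applied to $A+A$ is also the wrong object, since $\nu\le 2$ on $g-A$ does not remove those elements from $A+A$.

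The missing idea, which the paper uses for both parts, is to show that a missing sum forces $A-A$ to be small.
\begin{enumerate}[label=\textup{(\arabic*)},leftmargin=2.5em]
\item With $P=P_1$, one has $P\cap(g-A)=\varnothing$, so $|P|\le N-m$.
\item For each $a\in A$, the set $(a-g+A)\cap(-A)$ has at most three points. Each point gives $a+s+t=g$ with a coincidence among $a,s,t$, and each coincidence type contributes at most one point because doubling is injective.
\item Since $a-g+A$ and $b-g+A$ both lie in $G\setminus(-A)$ together with these exceptional points, $|A\cap(A-d)|\ge 3m-N-6$ for every $d=a-b\in A-A$.
\item Averaging $\nu$ over $P$ produces $\sigma\in P$ with $\nu(\sigma)>N-2m+9$. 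This is where $N\ge149$ (respectively $N\ge21$) is used; the cases $N=17,19$ come from \cref{thm:DSH}.
\item Then $\nu(\sigma+d)\ge |A\cap(A-d)|+\nu(\sigma)-m-1>2$, so $\sigma+(A-A)\subseteq P$ and hence $|A-A|\le N-m$.
\end{enumerate}
In (b) this contradicts $|A-A|\ge m>N-m$. In (a), Kneser's theorem for $A-A$ forces a stabilizer of index $3$, and a second application gives $|A-A|\ge N$, a contradiction.
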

\begin{proof}
The cases $N=17,19$ in part~\textup{(b)} follow from \cref{thm:DSH}. We prove part~\textup{(a)} for $N\ge149$ and part~\textup{(b)} for $N\ge21$.

Suppose that $g\notin\Sigma_3(A)$, put $m=|A|$, and let $P=\{x\in G:\nu(x)>2\}$. We shall show that the missing sum forces $A-A$ to be too small. First we obtain a uniform lower bound for the intersections $A\cap(A-d)$; we then combine it with a two-sum having many representations to place a translate of $A-A$ inside $P$. In both ranges under consideration, $m(m-1)>2N$, so $P\ne\varnothing$. By \eqref{eq:representation-inclusion}, $P+A\subseteq\Sigma_3(A)$. Hence $P\cap(g-A)=\varnothing$, and therefore
\begin{equation}\label{eq:three-P-upper}
|P|\le N-m.
\end{equation}
For $a\in A$, put $I_a=(a-g+A)\cap(-A)$. If $a-g+s=-t$ with $s,t\in A$, then $a+s+t=g$; since this is not a restricted three-term representation, at least two of $a,s,t$ coincide. The cases $s=a$, $t=a$, and $s=t$ contribute at most one point each, the last because doubling is bijective in an odd-order group. Hence $|I_a|\le3$.

For $d=a-b\in A-A$, let $B_d=\{u\in A:u+d\in A\}$. The intersection of $a-g+A$ and $b-g+A$ has size $|B_d|$, while their union is contained in $(G\setminus(-A))\cup I_a\cup I_b$. Therefore
\begin{equation}\label{eq:three-difference-count}
|B_d|\ge3m-N-6.
\end{equation}
Since $\nu(x)\le2$ outside $P$, averaging over $P$ gives
\begin{equation}\label{eq:three-P-average}
\max_{x\in P}\nu(x)
\ge 2+\frac{m(m-1)-2N}{|P|}
\ge 2+\frac{m(m-1)-2N}{N-m}.
\end{equation}
The last quantity is larger than $N-2m+9$. This is equivalent to $F(N,m):=-N^2+3Nm-9N-m^2+6m>0$. The function $F(N,m)$ is increasing in $m$ for $m\le N$. For part~\textup{(a)}, a direct calculation at the smallest admissible value $m=\lfloor2N/5\rfloor+1$ gives $F(N,m)>0$ for $N\ge149$; for part~\textup{(b)}, the same calculation at $m=\lfloor N/2\rfloor+1$ gives $F(N,m)>0$ for $N\ge21$. Thus there is $\sigma\in P$ with
\begin{equation}\label{eq:three-sigma-choice}
\nu(\sigma)>N-2m+9.
\end{equation}

We claim that $\sigma+(A-A)\subset P$. Put $C_\sigma=\{u\in A:\sigma-u\in A,\ \sigma-u\ne u\}$, so $|C_\sigma|=\nu(\sigma)$. For $d\in A-A$ and $u\in B_d\cap C_\sigma$, the pair $(u+d,\sigma-u)$ represents $\sigma+d$. Distinct $u$ give distinct ordered pairs, and at most one is diagonal. Hence, by \eqref{eq:three-difference-count} and \eqref{eq:three-sigma-choice},
\[
\nu(\sigma+d)\ge |B_d\cap C_\sigma|-1
\ge |B_d|+\nu(\sigma)-m-1>2.
\]
Therefore $\sigma+(A-A)\subset P$, and so
\begin{equation}\label{eq:three-difference-upper}
|A-A|\le |P|\le N-m.
\end{equation}

For part~\textup{(b)}, the trivial bound $|A-A|\ge m>N-m$ contradicts \eqref{eq:three-difference-upper}. For part~\textup{(a)}, let $H=\Stab(A-A)$ and put $h=|H|$. Kneser's theorem gives $|A-A|\ge2m-h$. Since \eqref{eq:three-difference-upper} gives $|A-A|<3N/5$, we obtain $h>2m-3N/5>N/5$. The case $H=G$ is already impossible, so $H$ is proper. Hence $1<[G:H]<5$; since $N$ is odd, $[G:H]=3$. Now $m>2N/5=6h/5>h$, so $A$ meets at least two $H$-cosets and $|A+H|\ge2h$. A second application of Kneser's theorem yields $|A-A|\ge2|A+H|-h\ge3h=N$, a contradiction.
\end{proof}

\begin{proposition}\label{prop:four-half}
Let $G$ be a finite abelian group of odd order $N\ge25$. If $|A|>N/2$, then $\Sigma_4(A)=G$.
\end{proposition}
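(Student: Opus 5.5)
Write $m=|A|>N/2$, and suppose for contradiction that some $g\in G$ lies outside $\Sigma_4(A)$. I will use the representation-counting principle with $j=2$: put $P_2=\{x:\nu(x)>4\}$, so that \eqref{eq:representation-inclusion} gives $P_2+\Sigma_2(A)\subseteq\Sigma_4(A)$. Hence $P_2\cap(g-\Sigma_2(A))=\varnothing$, and
\[
|P_2|+|\Sigma_2(A)|\le N .
\]
The strategy is to show that both $P_2$ and $\Sigma_2(A)$ are large, so that this inequality fails.

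\emph{The term $\Sigma_2(A)$.} Since $N$ is odd and $m>(N+1)/2$ as soon as $m\ge (N+3)/2$, the two-fold covering fact of Gallardo et al.\ gives $\Sigma_2(A)=G$ outright, which is an immediate contradiction because $P_2\neq\varnothing$. I will check that $P_2\neq\varnothing$ from \eqref{eq:representation-count}: the condition is $m(m-1)>4N$, which holds for $m>N/2$ once $N\ge 19$ or so. The remaining case is $m=(N+1)/2$. Then $|A\cap(z-A)|\ge 2m-N=1$ for every $z$, and equality forces the unique common element to be $z/2$. So $z\notin\Sigma_2(A)$ exactly when $A\cap(z-A)=\{z/2\}$. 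This yields $|\Sigma_2(A)|\ge N-m=(N-1)/2$. A better bound comes from Lev's two-fold estimate (\cref{thm:lev-two}) with $|G[2]|=1$: either $\Sigma_2(A)=A+A=G$ (by Kneser/pigeonhole, since $2m>N$), or $|\Sigma_2(A)|>1.618m-3$.

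\emph{The term $P_2$.} By \eqref{eq:representation-count} with $M_A\le m$ and $j=2$,
\[
|P_2|\ge \frac{m(m-1)-4N}{m-4}.
\]
At $m=(N+1)/2$ this is roughly $m-5\approx N/2-5$. Combined with $|\Sigma_2(A)|>0.8N-3$, the sum exceeds $N$ once $N\ge 25$, a comfortable margin. The threshold $N\ge25$ is presumably exactly what makes this numerical check go through, so I would verify the inequality $\frac{m(m-1)-4N}{m-4}+1.618m-3>N$ at $m=(N+1)/2$ carefully for small odd $N$.

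\emph{Main obstacle.} The delicate point is the boundary size $m=(N+1)/2$ for small $N$, where the constants matter. If the Lev estimate is too weak there, I would switch to an alternative argument modelled on \cref{prop:three-fold}(b). Fix $a_0\in A$. Missing $g$ in $\Sigma_4(A)$ means $g-a_0\notin\Sigma_3(A\setminus\{a_0\})$. But $|A\setminus\{a_0\}|=m-1$ may fail to exceed $N/2$, so this reduction needs adjustment. A cleaner option is to avoid the reduction altogether: show $\Sigma_2(A)\supseteq G\setminus\{\text{at most } m-1\text{ points}\}$ and apply the $P_2$ bound directly, handling small $N$ (below about $30$) by separate counting if necessary.
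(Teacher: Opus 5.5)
Your strategy is the paper's. You reduce to $m=(N+1)/2$; the paper passes to a subset, while you dispose of $m\ge(N+3)/2$ with the Gallardo et al.\ two-fold fact, and the two are equivalent. You then rule out $\Sigma_2(A)=A+A=G$ and play Lev's bound $|\Sigma_2(A)|>\varphi m-3$, with $\varphi=(1+\sqrt5)/2$, against the $j=2$ representation count for $|P_2|$.

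The gap is the numerical step you propose to verify: it is false at the endpoint $N=25$, and there is no comfortable margin. With $m=13$ and denominator $m-4=9$ you get $|P_2|\ge 56/9\approx 6.22$ and $|\Sigma_2(A)|>13\varphi-3\approx 18.03$. Your inequality $\frac{m(m-1)-4N}{m-4}+1.618m-3>N$ then reads $24.26>25$, which fails. Your rough estimates $N/2-5$ and $0.8N-3$ would already need $N\ge 27$. Even at $N=27$ the exact margin is only about $0.05$. So $N\ge25$ is not the threshold at which your version of the check goes through.

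The ingredient you dropped is the parity refinement already built into \eqref{eq:representation-count}. Since $\nu(x)$ is even, $\nu(x)\le M_A=2\lfloor m/2\rfloor$, which equals $m-1$ when $m$ is odd. You replaced $M_A$ by $m$. For $N=25$ the correct denominator is $8$, giving $|P_2|\ge 56/8=7>25-13\varphi+3\approx 6.97$. This contradicts $|P_2|\le N-|\Sigma_2(A)|<N-\varphi m+3$. In general the paper splits according to the parity of $n=(N-1)/2$ and gets the contradiction for even $n\ge 12$ and odd $n\ge 13$. Alternatively, integrality alone rescues your cruder bound at $N=25$: $|P_2|\ge\lceil 56/9\rceil=7$ and $|\Sigma_2(A)|\ge 19$, so $|P_2|+|\Sigma_2(A)|\ge 26>25$. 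Either fix must be stated; as written, the endpoint case is not proved.

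Your fallbacks would not help. A bound $|\Sigma_2(A)|\ge N-m+1=m$ together with $|P_2|\approx m-5$ falls short of $N=2m-1$, so the golden-ratio gain from Lev's estimate is genuinely needed. The reduction to $\Sigma_3(A\setminus\{a_0\})$ fails for the reason you yourself give.
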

\begin{proof}
Write $N=2n+1$ and, after replacing $A$ by a subset, assume $m=|A|=n+1$. Put $M=2\floor{m/2}$ and $p_2=|P_2|$. Suppose $g\notin\Sigma_4(A)$. By \eqref{eq:representation-inclusion}, $P_2+\Sigma_2(A)\subseteq\Sigma_4(A)$, and therefore
\begin{equation}\label{eq:p2-complement}
p_2\le N-|\Sigma_2(A)|.
\end{equation}

We first note that $\Sigma_2(A)\ne G$. Otherwise \eqref{eq:representation-count} gives $P_2\ne\varnothing$, and then $P_2+\Sigma_2(A)\subseteq\Sigma_4(A)$ would imply $\Sigma_4(A)=G$. Since $m>N/2$, we have $A+A=G$, so $\Sigma_2(A)\ne A+A$. As $G$ has odd order, $|G[2]|=1$, and \cref{thm:lev-two} together with \eqref{eq:p2-complement} gives
\begin{equation}\label{eq:p2-Lev}
p_2<N-\frac{1+\sqrt5}{2}m+3.
\end{equation}
On the other hand, \eqref{eq:representation-count} with $j=2$ gives
\begin{equation}\label{eq:p2-lower}
p_2\ge\frac{m(m-1)-4N}{M-4}.
\end{equation}

Put $\varphi=(1+\sqrt5)/2$. If $n$ is even, then $M=n$ and \eqref{eq:p2-lower} gives
\[
p_2\ge n-3-\frac{16}{n-4}>(2-\varphi)n+4-\varphi=N-\varphi m+3;
\]
if $n$ is odd, then $M=n+1$ and
\[
p_2\ge n-4-\frac{16}{n-3}>(2-\varphi)n+4-\varphi=N-\varphi m+3.
\]
The first strict inequality holds for every even $n\ge12$, and the second for every odd $n\ge13$; in each case the difference is increasing in $n$ and is already positive at the first admissible value. Both contradict \eqref{eq:p2-Lev}. Hence $\Sigma_4(A)=G$.
\end{proof}

\begin{proposition}\label{prop:five-half}
Let $G$ be a finite abelian group of odd order $N\ge29$. If $|A|>N/2$, then $\Sigma_5(A)=G$.
\end{proposition}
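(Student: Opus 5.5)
We follow the scheme of \cref{prop:four-half}. Write $N=2n+1$. After passing to a subset, assume $m=|A|=n+1$, and put $M=2\floor{m/2}$. Suppose that $g\notin\Sigma_5(A)$. There are two natural ways to split a five-term sum: as a two-term sum plus a three-term sum, or as a four-term sum plus a single element. By \eqref{eq:representation-inclusion} with $j=3$,
\[
P_3+\Sigma_3(A)\subseteq\Sigma_5(A).
\]
By \cref{prop:three-fold}(b), which applies since $N\ge29\ge17$ and $m>N/2$, we have $\Sigma_3(A)=G$. Hence if $P_3\ne\varnothing$, then $\Sigma_5(A)=G$ at once. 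The whole proof therefore reduces to showing $P_3\neq\varnothing$, that is, that some $x$ has $\nu(x)>6$.

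By \eqref{eq:representation-count} with $j=3$, this holds as soon as $M>6$ and $m(m-1)>6N$. Since $m\approx N/2$, we have $m(m-1)\approx N^2/4$, and $N^2/4>6N$ requires roughly $N>24$. We check the condition directly. For $N=2n+1$ and $m=n+1$ it reads
\[
(n+1)n>6(2n+1),\quad\text{that is,}\quad n^2-11n-6>0,
\]
which holds for $n\ge12$, i.e.\ $N\ge25$. Also $M\ge12>6$ there. So for $N\ge29$ both conditions hold comfortably, and $P_3\ne\varnothing$.

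Taking any $x\in P_3$ and any $y\in G=\Sigma_3(A)$, we can write $y=a_1+a_2+a_3$ with distinct $a_i\in A$. The six ordered representations of $x$ that use some $a_i$ are then excluded, and since $\nu(x)>6$ a representation $x=u+v$ with $u,v\notin\{a_1,a_2,a_3\}$ remains. This gives $x+y\in\Sigma_5(A)$. As $y$ is arbitrary, $\Sigma_5(A)=G$, contradicting the choice of $g$.

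The only genuine inputs are therefore \cref{prop:three-fold}(b) and the counting bound \eqref{eq:representation-count}. The main point to watch is the threshold arithmetic: $N\ge29$ must guarantee $m(m-1)>6N$ and $M>6$ at the smallest admissible $m$, which is exactly the computation above. If instead one used the four-plus-one splitting via \cref{prop:four-half}, one would need $\Sigma_4$ of $A$ with an element removed, and $|A|-1$ may fail to exceed $N/2$. This is why the $j=3$ route is preferable.
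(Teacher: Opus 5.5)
Your proof is correct, but it takes a different and shorter route than the paper.

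The paper does not use \cref{prop:three-fold}(b) at this point. Instead, it first derives the quantitative bounds $|P_1|\ge n-2$ and $|P_3|\ge n-9$ from \eqref{eq:representation-count}. It then applies Kneser's theorem to $P_1+P_3+A$, counting occupied cosets with a case split $q\ge10$ versus $2\le q\le9$, to show $P_1+P_3+A=G$. Finally it concludes through $P_1+A\subseteq\Sigma_3(A)$ and $P_3+\Sigma_3(A)\subseteq\Sigma_5(A)$.

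You instead note that \cref{prop:three-fold}(b) already gives $\Sigma_3(A)=G$. Then a single element of $P_3$ is enough, and nonemptiness of $P_3$ follows from $m(m-1)>6N$. This avoids Kneser's theorem and the borderline $N=29$ calculation, and it already works for $N\ge25$.

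The cost is that your argument rests on the Lev-style argument behind \cref{prop:three-fold}(b). The paper's proof needs only the representation-counting principle and Kneser's theorem. Since \cref{prop:three-fold}(b) is proved earlier in the paper for all $N\ge17$, relying on it here is legitimate.
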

\begin{proof}
Write $N=2n+1$ and, after replacing $A$ by a subset, assume $m=|A|=n+1$.

The count \eqref{eq:representation-count} gives
\begin{equation}\label{eq:five-P-bounds}
|P_1|\ge n-2,\qquad |P_3|\ge n-9.
\end{equation}
Indeed, the contrary bounds $|P_1|\le n-3$ and $|P_3|\le n-10$ would
respectively give
\[
m(m-1)\le2N+(m-2)(n-3),\qquad
m(m-1)\le6N+(m-6)(n-10),
\]
contradicting $N\ge29$.  In the sole borderline calculation $N=29$
for the second inequality, use the even bound $\nu\le14$, which would
give $210\le206$.

We claim that $P_1+P_3+A=G$.  Otherwise let $H$ be its stabilizer,
$q=[G:H]>1$, and let $r_1,r_3,r_A$ count the $H$-cosets meeting the
three sets.  The quotient sum is aperiodic and proper, so Kneser's theorem gives
\begin{equation}\label{eq:r-sum-five}
r_1+r_3+r_A\le q+1.
\end{equation}
For $q\ge10$, \eqref{eq:five-P-bounds} and $|A|=n+1$ give
\[
r_1+r_3+r_A\ge
\frac{(n-2)q}{N}+\frac{(n-9)q}{N}+\frac{(n+1)q}{N}
=\frac{(3n-10)q}{N}
\ge\frac{(3n-10)q}{2n+1}>q+1,
\]
where the last inequality follows from $n\ge14$ and $q\ge10$.
For $2\le q\le9$, since $(n-2)/N\ge12/29$,
$(n-9)/N\ge5/29$, and $|A|>N/2$,
\[
r_1+r_3+r_A\ge
\left\lceil\frac{12q}{29}\right\rceil+
\left\lceil\frac{5q}{29}\right\rceil+
\left\lfloor\frac q2\right\rfloor+1>q+1.
\]
Both bounds contradict \eqref{eq:r-sum-five}.  Thus $P_1+P_3+A=G$.

By \eqref{eq:representation-inclusion},
\[
G=P_1+P_3+A
\subseteq P_3+\Sigma_3(A)
\subseteq\Sigma_5(A)
\subseteq G.
\]
Hence $\Sigma_5(A)=G$.
\end{proof}

\section{The sharp one-third theorem in odd order}\label{sec:one-third}
We prove the four- and five-fold endpoints first, then extend them by
padding and quotient selection.  The small quotients $C_3,C_5$ require fibre
arguments; larger least primes admit a popular-sum proof.
For a quotient $\pi:G\to C_p$ with kernel $H$, write
\[
A_i=A\cap\pi^{-1}(i),\qquad i+H=\pi^{-1}(i)\quad(i\in C_p).
\]
We first record the restricted-doubling input of Hamidoune, Llad\'o and Serra.

\begin{theorem}[Hamidoune--Llad\'o--Serra]\label{thm:HLS-third}
Let $H$ be a finite abelian group of odd order, and let $B\subset H$ satisfy
$0\in B$, $\langle B\rangle=H$, and $|B|\ge21$.  Then
\[
|\Sigma_2(B)|\ge \min\left\{|H|,\frac{3|B|}{2}\right\}.
\]
\end{theorem}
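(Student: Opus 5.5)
The plan is to compare the restricted sumset $\Sigma_2(B)$ with the full sumset $B+B$ and split into two cases according to whether they coincide. Since $H$ has odd order, $H[2]=\{0\}$ and doubling is a bijection. So the only elements of $B+B$ that can be missing from $\Sigma_2(B)$ are doubles $2b$ whose sole representation is $b+b$.

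\emph{Case $\Sigma_2(B)=B+B$.} Here we apply \cref{thm:kneser} inside $H$ to the two sets $X_1=X_2=B$, with $K=\Stab(B+B)$.
\begin{itemize}
\item If $K=H$, then $B+B=H$ and the bound $|\Sigma_2(B)|\ge|H|$ holds.
\item Otherwise $K<H$ is proper. Since $0\in B$ and $\langle B\rangle=H$, the set $B$ cannot lie in a single $K$-coset, because that coset would be $K$ itself. Hence $B+K$ is a union of $t\ge2$ cosets of $K$.
\end{itemize}
In the second subcase Kneser's theorem gives
\[
|B+B|\ge 2t|K|-|K|=(2t-1)|K|.
\]
We also have $|B|\le t|K|$, so it suffices that $(2t-1)\ge \tfrac32 t$, which holds for all $t\ge2$. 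Therefore $|\Sigma_2(B)|=|B+B|\ge \tfrac32|B|$ in this case.

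\emph{Case $\Sigma_2(B)\ne B+B$.} Here we invoke \cref{thm:lev-two} with $|H[2]|=1$, which gives
\[
|\Sigma_2(B)|>\varphi|B|-3,\qquad \varphi=\tfrac{1+\sqrt5}{2}.
\]
This exceeds $\tfrac32|B|$ as soon as $(\varphi-\tfrac32)|B|\ge3$, that is, for $|B|\ge 26$.

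The main obstacle is the leftover window $21\le |B|\le 25$ in the second case, where Lev's bound falls short of $\tfrac32|B|$ by at most about one element. For example, at $|B|=21$ it gives $|\Sigma_2(B)|\ge 31$, while we need $\lceil 31.5\rceil=32$. I would close this gap in two steps.
\begin{itemize}
\item First, I would exploit integrality together with the strictness of Lev's inequality; this alone already disposes of some of the values of $|B|$ in the window.
\item For the remaining values, I would work more carefully with the structure of the missing doubles. Let
\[
D=\{2b: b\in B,\ 2b \text{ has no representation } b'+b'' \text{ with } b'\ne b''\},
\]
so that $\Sigma_2(B)=(B+B)\setminus D$. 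Then combine a Kneser lower bound for $|B+B|$, obtained exactly as in the first case, with an upper bound on $|D|$.
\end{itemize}
The upper bound on $|D|$ comes from this observation: if $2b\in D$, then $b$ is the unique element of $B$ lying in $(2b-B)$, so $B\cap(2b-B)=\{b\}$. Counting these "isolated" elements against $|B+B|\ge 2|B|-1$ should give the needed extra element.

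If this counting does not suffice, I would fall back on the isoperimetric (atom) method of Hamidoune, Llad\'o and Serra, which is the source of this theorem. Concretely, I would take the $2$-atom of $B$ in the Cayley graph on $H$, use $\langle B\rangle=H$ to force that atom to be a subgroup or a small set, and check the finitely many configurations by hand. Since the statement is quoted from \cite{HLS}, in the paper one may simply cite it; the sketch above indicates why the constant $\tfrac32$ and the size condition $|B|\ge21$ are natural.
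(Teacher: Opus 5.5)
Your two-case reduction is sound as far as it goes, and it takes a genuinely different route from the paper. The paper gives no argument at all; it simply quotes Hamidoune--Llad\'o--Serra.

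\begin{itemize}
\item The Kneser case is correct. The hypotheses $0\in B$ and $\langle B\rangle=H$ are used exactly to force $t\ge2$.
\item The Lev case does more than you say. Because \cref{thm:lev-two} is strict, integrality gives $|\Sigma_2(B)|\ge33,35,36,38$ for $|B|=22,23,24,25$.
\end{itemize}

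So your argument proves the statement for every $|B|\ge22$, using only tools already stated in the paper. The single value it does not reach is $|B|=21$, where Lev yields only $|\Sigma_2(B)|\ge31$ and you need $32$. The odd-order two-fold covering fact gives $\Sigma_2(B)=H$ whenever $|H|\le39$. Hence the configuration left open is precisely $|B|=21$, $|H|\ge41$, $\Sigma_2(B)\ne B+B$, $|\Sigma_2(B)|=31$.

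Nothing in the proposal excludes that configuration, so there is a genuine gap there.

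\begin{itemize}
\item The observation $B\cap(2b-B)=\{b\}$ for $2b\in D$ does not by itself bound $|D|$; for a Sidon-type set every double is isolated.
\item What controls $|D|$ is its trade-off against the size of the sumset. Optimizing that trade-off is exactly what Lev's theorem does, and its outcome $\varphi|B|-3$ is one short here.
\item If $B+B$ is aperiodic, you would have to rule out $|B+B|\ge41$ together with $|D|\ge10$.
\item If $B+B$ has a non-trivial proper stabilizer $K$, the bound $|B+B|\ge2|B|-1$ you propose to count against is unavailable. Kneser gives only $|B+B|\ge2|B+K|-|K|$, which can be as small as $33$ here.
\end{itemize}

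The phrase ``should give the needed extra element'' does not address either situation. The fallback to the atom method describes the original Hamidoune--Llad\'o--Serra proof; it is not an argument.

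As written, your proof establishes the theorem with $|B|\ge21$ replaced by $|B|\ge22$. For $|B|=21$ it rests on the same citation the paper uses.
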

\begin{proof}
This is the odd-order bound of Hamidoune--Llad\'o--Serra \cite[p.~513]{HLS}.
\end{proof}

The following lemma is the only mixed restricted-doubling statement needed below.

\Needspace{20\baselineskip}
\begin{lemma}\label{lem:mixed-doubling-third}
Let $H$ be a finite abelian group of odd order $h$, and let $B,C\subset H$
with $|B|,|C|\ge21$.
\begin{enumerate}[label=\textup{(\roman*)},leftmargin=2.5em]
\item If $|B|+|C|>2h/3+1$, then $\Sigma_2(B)+\Sigma_2(C)=H$.
\item If every prime divisor of $h$ is at least $5$ and
$|B|+|C|>2h/3$, then the same conclusion holds.
\end{enumerate}
\end{lemma}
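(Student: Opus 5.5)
The natural route is a pigeonhole argument, with Kneser's theorem (\cref{thm:kneser}) handling the degenerate cases. Put $X=\Sigma_2(B)$ and $Y=\Sigma_2(C)$. If $|X|+|Y|>h$, then for every $z\in H$ the sets $X$ and $z-Y$ intersect, so $X+Y=H$. The task is therefore to bound $|X|$ and $|Y|$ from below using \cref{thm:HLS-third}, and to treat the cases where that theorem does not apply directly.

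\emph{Normalisation.} By \cref{lem:translation}, translating $B$ by $-b_0$ with $b_0\in B$ changes $X$ only by the translation $-2b_0$, and similarly for $C$. So we may assume $0\in B$ and $0\in C$. Let $K=\langle B\rangle$ and $L=\langle C\rangle$. These are the subgroups generated by $B-B$ and $C-C$ before translating, so they do not depend on the choice of translation.

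\emph{Generic case $K=L=H$.} \Cref{thm:HLS-third} applies, since $|B|,|C|\ge21$. It gives $|X|\ge\min\{h,3|B|/2\}$ and $|Y|\ge\min\{h,3|C|/2\}$. If either minimum equals $h$, we are done. Otherwise
\[
|X|+|Y|\ge\tfrac32(|B|+|C|)>h+\tfrac32>h,
\]
and pigeonhole finishes the proof.

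\emph{Degenerate case.} Suppose, say, $K\ne H$. Since $h$ is odd, $[H:K]\ge3$, so $|B|\le|K|\le h/3$. In case (ii) every prime divisor of $h$ is at least $5$, so $[H:K]\ge5$ and $|B|\le h/5$.
\begin{itemize}[leftmargin=2.5em]
\item In case (i), $|C|>2h/3+1-|B|\ge h/3+1$, which forces $L=H$.
\item In case (ii), $|C|>2h/3-h/5>h/3$, which again forces $L=H$.
\end{itemize}
Thus exactly one of the two sets is degenerate. Apply \cref{thm:HLS-third} inside $K$ to get $|X|\ge\min\{|K|,3|B|/2\}$, and inside $H$ to get $|Y|\ge\min\{h,3|C|/2\}$. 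If $Y=H$ we are done. Otherwise suppose $X+Y\ne H$, and let $S=\Stab(X+Y)$, a proper subgroup. Kneser's theorem gives
\[
|X+Y|\ge|X+S|+|Y+S|-|S|.
\]
I would then split according to whether $K\subseteq S$.

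If $K\not\subseteq S$, then $X$ is a large subset of a single $K$-coset. In that situation $X+S$ covers at least $\lceil |X|/|K\cap S|\rceil\cdot|S|$ elements, and this should yield $|X+S|+|Y+S|-|S|\ge|X|+|Y|-\dots$ large enough to contradict $X+Y\ne H$.

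If $K\subseteq S$, the argument passes to the quotient $H/S$. There $X$ is a single point, so the question is whether $Y+S=H$. Here $|Y|\ge3|C|/2>h/2$, with extra room coming from the constant in the hypothesis. Also $Y$ is $S$-periodic up to $X$. Together these should force $Y+S$ to fill every $S$-coset, because $[H:S]\ge3$.

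\emph{Main obstacle.} I expect the difficulty to lie in this degenerate case, in the mixed comparison where $X$ is confined to a coset of an index-$3$ subgroup. This is exactly where the constant $+1$ in (i) is needed, and where the absence of index-$3$ subgroups in (ii) supplies the slack that allows it to be dropped. My first attempt would be to redo Kneser's inequality in the quotient $H/S$ and count $S$-cosets:
\begin{itemize}[leftmargin=2.5em]
\item with $r_X=1$ coset met by $X$ and $r_Y\ge\lceil |Y|/|S|\rceil$ cosets met by $Y$, aperiodicity in $H/S$ requires $r_X+r_Y\le[H:S]$;
\item the bounds $|Y|>h/2+3/2$ (resp.\ $|Y|>3h/5$ in case (ii)) together with $[H:S]\ge3$ (resp.\ $\ge5$) should make $r_X+r_Y\le[H:S]$ impossible.
\end{itemize}
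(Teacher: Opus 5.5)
Your generic case ($K=L=H$) is correct and matches the paper's pigeonhole step. The degenerate case, however, has a genuine gap, located where you expected the difficulty.

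The branch $K\not\subseteq S$ is only asserted (``should yield''), but it closes easily. If $|K|\le 3|B|/2$, then \cref{thm:HLS-third} makes $X$ a full $K$-coset, so $X+Y$ is $K$-periodic and $K\subseteq S$ automatically. If $|K|>3|B|/2$, then $|X|\ge 3|B|/2$ and your generic pigeonhole already finishes.

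The real problem is the branch $K\subseteq S$. You claim that $|Y|>h/2+3/2$ (resp.\ $|Y|>3h/5$) together with $[H:S]\ge3$ (resp.\ $\ge5$) makes $r_X+r_Y\le[H:S]$ impossible. This is false. For $q=[H:S]=3$ you only get $r_Y\ge\lceil 3|Y|/h\rceil=2$, so $r_X+r_Y=3=q$. In case (ii) with $q=5$ you get $r_Y\ge4$, and again $r_X+r_Y=5=q$. A set of size just over $h/2$ can miss a coset of an index-$3$ subgroup. To force $Y+S=H$ from size alone you would need $|Y|>(1-1/q)h$, e.g.\ $|Y|>2h/3$. \cref{thm:HLS-third} cannot supply this when $|B|$ is near $h/3$ and $|C|$ is near $h/3+1$. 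Replacing $C$ by the cardinality of $\Sigma_2(C)$ throws away exactly the information you need.

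The missing idea is to argue about $C$ directly. Suppose $\Sigma_2(C)$ misses a class $t$ of the odd-order quotient $H/S$, with $q=[H:S]$. The involution $x\mapsto t-x$ on $H/S$ has exactly one fixed point. $C$ meets at most one coset in each non-fixed pair $\{x,t-x\}$. The fixed coset contains at most one element of $C$, since otherwise two distinct elements of $C$ would have quotient sum $t$. Hence $|C|\le(q-1)|S|/2+1$. Also $B$ lies in one $K$-coset, so $|B|\le|K|\le|S|$. Therefore
\[
|B|+|C|\le\frac{(q+1)|S|}{2}+1=\frac{(q+1)h}{2q}+1\le\frac{2h}{3}+1
\]
for $q\ge3$, contradicting (i); this is exactly where the $+1$ enters. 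In (ii), $q\ge5$ and $|S|\ge|B|\ge21$ give $(q-3)|S|/6>1$, so the left side is strictly less than $2h/3$.

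Kneser's theorem is not actually needed. The paper applies this coset-pairing count directly to the subgroup $K=\langle B-B\rangle$, once $\Sigma_2(B)$ is known to be a full $K$-coset.
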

\begin{proof}
Put $K_B=\langle B-B\rangle$ and $K_C=\langle C-C\rangle$.
After translating into these subgroups, \cref{thm:HLS-third} gives
\[
|\Sigma_2(B)|\ge\min\{|K_B|,3|B|/2\},\qquad
|\Sigma_2(C)|\ge\min\{|K_C|,3|C|/2\}.
\]
If both subgroup orders exceed the corresponding second terms, then
$|\Sigma_2(B)|+|\Sigma_2(C)|>h$, so their sum is $H$.
Otherwise, interchange $B,C$ if necessary so that $|K_B|\le3|B|/2$.
The first inequality then says that $\Sigma_2(B)$ is a full $K_B$-coset.
It remains to show that $\Sigma_2(C)$ meets every coset of $K_B$.

Put $d=|K_B|$ and $q=[H:K_B]$; the case $q=1$ is immediate.
Suppose a class $t\in H/K_B$ is absent from the quotient image of
$\Sigma_2(C)$.  The involution $x\mapsto t-x$ has one fixed point, since
the quotient has odd order.  At most one coset in each non-fixed pair
can meet $C$, and the fixed coset can contain at most one element of $C$.
Otherwise two distinct elements of $C$ would have quotient sum $t$.
Thus $|C|\le(q-1)d/2+1$ and $|B|\le d$.

In part~\textup{(i)}, $q\ge3$, and the hypothesis gives
$2h/3+1<|B|+|C|\le(q-1)d/2+1+d\le2h/3+1$, a contradiction.
In part~\textup{(ii)}, we have $q\ge5$ and $d\ge|B|\ge21$, so
$2h/3<|B|+|C|\le(q-1)d/2+1+d<2h/3$, again a contradiction; the last
inequality is equivalent to $(q-3)d/6>1$.  Therefore $\Sigma_2(C)$ meets every
$K_B$-coset, and adding the full coset $\Sigma_2(B)$ proves the lemma.
\end{proof}

\subsection{The four-fold endpoint}\label{subsec:four-endpoint}

For a finite subset $X$ of an abelian group, put
\[
r_X(z)=|\{(x,y)\in X^2:x+y=z\}|,
\qquad X+_iX=\{z:r_X(z)\ge i\}\quad(i\ge1).
\]
We use the following equal-set form of Grynkiewicz's theorem on popular sums.

\Needspace{16\baselineskip}
\begin{theorem}[Grynkiewicz's theorem on popular sums]\label{thm:popular-sum}
Let $t\ge1$ be an integer, and let $X$ be a finite subset of an abelian
group with $|X|\ge t$.  Then either
\[
\sum_{i=1}^{t}|X+_iX|\ge 2t|X|-2t^2+1,
\]
or $L=\Stab(X+_tX)$ is non-trivial and
\[
\sum_{i=1}^{t}|X+_iX|\ge 2t|X|-t|L|.
\]
\end{theorem}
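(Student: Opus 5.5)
The statement is not new to this paper. I would obtain it by specializing Grynkiewicz's Kneser-type generalization of Pollard's theorem for two sets (see Grynkiewicz's monograph \emph{Structural Additive Theory}). That result says that for finite sets $X,Y$ in an abelian group and $t\le\min\{|X|,|Y|\}$, writing $X+_iY$ for the set of elements with at least $i$ representations,
\[
\sum_{i=1}^t|X+_iY|\ge t|X|+t|Y|-2t^2+1,
\]
unless $L=\Stab(X+_tY)$ is non-trivial, in which case $\sum_{i=1}^t|X+_iY|\ge t|X|+t|Y|-t|L|$. The plan is to put $Y=X$ and check that the two notions agree. In the paper's notation, $r_X(z)$ counts ordered pairs, possibly with $x=y$, which is exactly the representation function $r_{X,X}$ in the two-set theorem. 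So $X+_iX$ coincides with Grynkiewicz's $X+_iX$. The hypothesis $|X|\ge t$ is his $t\le\min\{|X|,|Y|\}$, and the two alternatives become the displayed bounds $2t|X|-2t^2+1$ and $2t|X|-t|L|$.

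To make the step more than a bare citation, I would recall why the two-set theorem holds. The key identity is
\[
\sum_{i=1}^t|X+_iY|=\sum_z\min\{t,r_{X,Y}(z)\}.
\]
One proceeds by induction on $|Y|$ using the Dyson $e$-transform $X\mapsto X\cup(Y+e)$, $Y\mapsto Y\cap(X-e)$. This transform preserves $|X|+|Y|$ and does not increase the truncated representation counts. When no useful transform remains, $X+Y$ is a union of cosets of a subgroup, and Kneser's theorem (\cref{thm:kneser}) is applied level by level to the sets $X+_iY$ to produce the periodic alternative with $L=\Stab(X+_tY)$. In the aperiodic branch the bound reduces to Pollard's counting, which gives the $-2t^2+1$ error.

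I expect the only real obstacle to be the periodic case: one must show that the stabilizer appearing in the final bound can be taken to be $\Stab(X+_tX)$ rather than a stabilizer of some intermediate level set. This is exactly the content of Grynkiewicz's theorem, so I would not reprove it. I would simply verify that the equal-set specialization and the paper's convention for $r_X$ (ordered pairs, diagonal included) match the hypotheses of the cited statement.
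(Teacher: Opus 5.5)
Your proposal is correct and takes the same approach as the paper, which proves this statement by citing the equal-set case $Y=X$ of Grynkiewicz's two-set theorem (his Theorem~1.1). Your check that $r_X$ counts ordered pairs with the diagonal included, so that $X+_iX$ coincides with his level sets and $|X|\ge t$ is his size hypothesis, is the only verification the specialization needs; the Dyson-transform sketch is unnecessary, since the theorem is used as a black box.
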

\begin{proof}
This is the equal-set specialization of \cite[Theorem~1.1]{Grynkiewicz}.
\end{proof}

The next proof compares the sum of the representation levels with an upper
bound forced by a missing restricted four-sum.

\begin{proposition}\label{prop:four-large-prime}
Let $G$ be a finite abelian group of odd order $N>291$ with $p(G)\ge7$.
If $A\subset G$ and $|A|>N/3$, then $\Sigma_4(A)=G$.
\end{proposition}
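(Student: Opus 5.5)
The plan is to argue by contradiction and compare the popular-sum levels of $A$ with an upper bound forced by a missing restricted four-sum. Suppose $g\notin\Sigma_4(A)$, put $m=|A|>N/3$, and let $\nu$ be the restricted representation function from \cref{sec:small-sums}. Because $N$ is odd, each $z$ has at most one diagonal representation $z=2a$. Hence $r_A(z)\le\nu(z)+1$, and the level sets $A+_iA$ are controlled by $\nu$.

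\textbf{Step 1: a pairing constraint.} Suppose $\nu(z)\ge2$ and fix a restricted representation $z=a+b$. A prescribed element lies in at most two ordered representations of $g-z$. So if $\nu(g-z)>4$, some representation $g-z=c+d$ avoids $a,b$, and then $g=a+b+c+d\in\Sigma_4(A)$, a contradiction. Thus for each pair $\{z,g-z\}$, either both values of $\nu$ are at most $4$, or one of them vanishes. In the first case $r_A\le5$ at both points. In the second case $r_A\le1$ at one point and $r_A\le m$ at the other. (If $z=g-z$, the point contributes at most $t$ below.)

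\textbf{Step 2: the upper bound on levels.} Fix $t$, e.g.\ $t=9$. Note that $\sum_{i\le t}|A+_iA|=\sum_z\min\{r_A(z),t\}$. By Step 1, each pair $\{z,g-z\}$ contributes at most $\max\{t+1,10\}=t+1$. Therefore
\[
\sum_{i=1}^{t}|A+_iA|\le \frac{(N+1)(t+1)}{2}.
\]
In the first alternative of \cref{thm:popular-sum}, the left side is at least $2tm-2t^2+1>2tN/3-2t^2+1$. Since $2t/3>(t+1)/2$ for $t>3$, these bounds are incompatible once $N$ is large. I expect the constant $291$ to come from optimizing $t$ in this inequality, with the borderline checked directly.

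\textbf{Step 3: the periodic alternative (main obstacle).} Now suppose $L=\Stab(A+_tA)$ is non-trivial, so the lower bound is only $2tm-t|L|$. Here $|L|\ge p(G)\ge7$, and every prime divisor of $[G:L]$ is also at least $7$. If $|L|$ is small compared with $N$, say $|L|<(4m-N-2)/3$ up to constants, the same comparison with Step 2 still gives a contradiction. Otherwise $L$ is a large subgroup of index at most a small constant. Then $A+_tA$ is a union of $L$-cosets, and the argument moves to $G/L$.

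In that case I would pass to the fibres $A_i=A\cap(x_i+L)$. The hypothesis $|A|>N/3$, together with the fact that the index has no prime factor $3$ or $5$, should force two fibres, or a fibre and its pairing in $G/L$, whose sizes exceed $2|L|/3$ in total. Then \cref{lem:mixed-doubling-third}(ii) shows that $\Sigma_2(B)+\Sigma_2(C)$ fills a whole $L$-coset. Choosing the quotient class by an argument in the spirit of \cref{lem:quotient-selection} would then produce $g$ as a restricted four-sum.

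The difficult part is this last case analysis: it must cover every distribution of $A$ among the $L$-cosets when $[G:L]\ge7$, including the possibility that $A$ is spread thinly over many cosets. In that situation I would instead apply Kneser's theorem to $A+A$ in $G/L$, aiming to show that the popular-sum set is too large to be compatible with Step 1.
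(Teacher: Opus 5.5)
Your Steps 1 and 2 are correct and match the paper's setup; the paper translates so that the missing sum is $0$, but the pairing $\{z,g-z\}$ and the use of \cref{thm:popular-sum} are the same. The gap is Step 3. As written it is not a proof: the fibre analysis via \cref{lem:mixed-doubling-third}(ii) and quotient selection is left explicitly unfinished (``should force'', ``the difficult part''), and it is not clear it goes through.

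With $t=9$, the periodic alternative does not close. Properness of $L$ and $p(G)\ge7$ give only $|L|\le N/7$, so $18m-9|L|>6N-9N/7=33N/7$, which is below your upper bound $5(N+1)$. The residual case, $[G:L]=7$, is what you then try to handle by hand.

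The missing idea is to take the level larger, which makes Step 3 unnecessary. Your Step 1 bounds each pair's contribution by $\max\{t+1,10\}=t+1$, so the total is about $(t+1)N/2$. The periodic alternative gives
\[
2tm-t|L|>tN\left(\tfrac23-\tfrac17\right)=\tfrac{11}{21}tN .
\]
This exceeds $(t+1)N/2$ exactly when $t\ge21$. With $t=21$ the periodic case dies at once: $42m-21|L|>14N-3N=11N$.

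For the upper bound, use $r_A(z_0)\le3$ at the fixed point $z_0$ with $2z_0=g$. Two distinct restricted pairs with sum $z_0$ are disjoint and would put $g$ in $\Sigma_4(A)$. This gives $\sum_{i\le21}|A+_iA|\le11(N-1)+3=11N-8$.

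You must also check that $A+_{21}A$ is non-empty and that $L$ is proper; otherwise $L=G$ is possible and the bound $2tm-tN$ is useless. Non-emptiness follows from $\max_z r_A(z)\ge m^2/N>N/9>21$. Properness follows because $r_A(z_0)\le3<21$, so $z_0\notin A+_{21}A$.

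The first alternative with $t=21$ reads $42m-881>14N-881>11N-8$, which is exactly $N>291$. So the constant $291$ comes from the choice $t=21$, which the periodic case forces. It does not come from optimizing $t$ in the first alternative: with $t=9$, the first alternative already closes for $N\ge166$.
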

\begin{proof}
Fix $g\in G$. Choose $c\in G$ with $4c=g$, and put $B=A-c$, $m=|B|$, and $r(x)=r_B(x)$.
It suffices to prove $0\in\Sigma_4(B)$, since
$\Sigma_4(B)=\Sigma_4(A)-g$.  Suppose otherwise.  We compare the upper
bound on the truncated representation count forced by this missing
zero-sum with the lower bounds in \cref{thm:popular-sum}.

First observe that
\begin{equation}\label{eq:popular-opposite-zero}
r(x)\ge6\quad\Longrightarrow\quad r(-x)\le1
\qquad(x\in G).
\end{equation}
Indeed, distinct unordered restricted pairs with a fixed sum are disjoint,
and each sum has at most one diagonal representation.  Thus $r(x)\ge6$
gives at least three disjoint restricted pairs of sum $x$.
Any restricted pair of sum $-x$ meets at most two of these three pairs.
A remaining disjoint pair would give a restricted four-term zero-sum.
At zero we have the stronger bound $r(0)\le3$: two different unordered
zero-sum pairs would already give four distinct elements summing to zero.

We now use the level $21$. From \eqref{eq:popular-opposite-zero}, $\min\{21,r(x)\}+\min\{21,r(-x)\}\le22$ for every $x\in G$.
Indeed, if one count is at least six, the other is at most one; otherwise
both are at most five.  At $x=0$ the left side is at most six.
Counting the representation levels and pairing $x$ with $-x$ gives
\begin{align}
\sum_{i=1}^{21}|B+_iB|
&=\sum_{x\in G}\min\{21,r(x)\}\notag\\
&=\frac12\sum_{x\in G}
  \bigl(\min\{21,r(x)\}+\min\{21,r(-x)\}\bigr)\notag\\
&\le\frac{22(N-1)+6}{2}=11N-8.\label{eq:popular-upper}
\end{align}
The first equality counts a sum once for each level it reaches.

Now apply \cref{thm:popular-sum} with $t=21$; here $m>N/3>21$.
In its first alternative,
\[
\sum_{i=1}^{21}|B+_iB|
\ge42m-881>14N-881>11N-8,
\]
where the last inequality is exactly $N>291$.  This contradicts
\eqref{eq:popular-upper}.

In its second alternative, put $L=\Stab(B+_{21}B)$.
The level set is non-empty because
\[
\max_x r(x)\ge\frac{m^2}{N}>\frac N9>21,
\]
and it is proper because $r(0)\le3$.  Thus $L$ is a proper non-trivial
subgroup.  By $p(G)\ge7$, we have $|L|\le N/7$.
The second lower bound therefore gives
\[
\sum_{i=1}^{21}|B+_iB|
\ge42m-21|L|>14N-3N=11N,
\]
again contradicting \eqref{eq:popular-upper}.
Hence $0\in\Sigma_4(B)$, or equivalently $g\in\Sigma_4(A)$.
As $g$ was arbitrary, the proposition follows.
\end{proof}

\begin{proposition}\label{prop:sharp-four}
Let $G$ be a finite abelian group of odd order $N>605$, and let $A\subset G$.
If $|A|>N/3+1$, then $\Sigma_4(A)=G$.  Moreover, if $p(G)=5$, then the
same conclusion already follows from $|A|>N/3$.
\end{proposition}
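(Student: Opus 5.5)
We split according to the least prime $p=p(G)$. If $p\ge7$, then $|A|>N/3$ together with $N>605>291$ lets us apply \cref{prop:four-large-prime} directly. So the substantive cases are $p=3$ and $p=5$. In each we fix a quotient $\pi:G\to C_p$ with kernel $H$, $h=|H|=N/p$, and fibres $A_i=A\cap(i+H)$.

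\proofcase{Fibre strategy}
To realise a target $g\in G$, we look for a decomposition of the four summands into two restricted pairs. Concretely, we choose $i,j\in C_p$, possibly equal, with $\pi(g)=2i+2j$. We then try to show that $\Sigma_2(A_i)+\Sigma_2(A_j)$ covers the coset $2i+2j+H$ when $i\ne j$, and that $\Sigma_4(A_i)$ covers it when $i=j$. After translating the fibres into $H$, the case $i\ne j$ is exactly \cref{lem:mixed-doubling-third}. Part~(i) requires $|A_i|+|A_j|>2h/3+1$. When $p=5$ we may instead use part~(ii), which needs only $|A_i|+|A_j|>2h/3$, provided $h$ has all prime divisors at least $5$. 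The case $i=j$ is handled by \cref{prop:four-half} applied inside $H$ whenever $|A_i|>h/2$. Mixed shapes are also available: $\Sigma_3(A_i)+A_j$, via \cref{prop:three-fold}, and $\Sigma_2(A_i)+A_j+A_k$, via Cauchy--Davenport and Kneser in $H$. These serve as fall-backs when one fibre is small.

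\proofcase{Case $p=3$}
Here $|A|>h+1$, so the three fibre sizes sum to more than $h+1$. We sort the fibres so that $|A_0|\ge|A_1|\ge|A_2|$. If two fibres together exceed $2h/3+1$ and both have size at least $21$, then the pairs $(i,j)$ with $i\ne j$ give targets $2(i+j)$. These run over all of $C_3$ as $\{i,j\}$ ranges over the three pairs, but only one pair is guaranteed to be large, so we need the other shapes. The plan is to enumerate the size profiles.
\begin{itemize}
\item If $|A_0|>h/2$, then \cref{prop:four-half} covers $4\cdot0+H$. The remaining two cosets are reached using $\Sigma_3(A_0)$ (which is all of $H$ by \cref{prop:three-fold}(b)) plus one element of $A_1$ or $A_2$. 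We also use $\Sigma_2(A_0)+A_1+A_2$ and $\Sigma_2(A_0)+\Sigma_2(A_1)$ when these fibres are non-empty. If both $A_1$ and $A_2$ are nearly empty, then $A_0$ has more than $h$ elements minus a constant. Then $\Sigma_2(A_0)$ is essentially all of $H$ and the few outside elements suffice, because $|A|>h+1$ forces at least two elements outside $H$. This is exactly why the ``$+1$'' is needed.
\item If all fibres are at most $h/2$, then each pairwise sum exceeds $2h/3+1$ roughly. In that case \cref{lem:mixed-doubling-third}(i) applies to every pair, and the three pairs give all three cosets.
\end{itemize}
Intermediate profiles, with one fibre of size between $h/3$ and $h/2$ and a small fibre, are treated with the mixed shapes together with the quotient selection of \cref{lem:quotient-selection}.

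\proofcase{Case $p=5$}
With $|A|>h\cdot5/3$, we use \cref{lem:mixed-doubling-third}(ii) on pairs of fibres. We apply \cref{lem:quotient-selection} in $C_5$ to guarantee that the needed classes $2i+2j$ are realised by pairs of sufficiently large fibres. The threshold $N>605$ should guarantee $h\ge121$, so that the conditions $|B|,|C|\ge21$ and the bounds of \cref{prop:three-fold} and \cref{prop:four-half} inside $H$ hold. The sub-case where $h$ itself has prime divisor $3$ cannot occur, since $p(G)=5$.

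\proofcase{Main obstacle}
The hard part is the $p=3$ case with lopsided fibres. There the large fibre is just below the $h/2$ threshold of \cref{prop:four-half}, while the other fibres are too small for \cref{lem:mixed-doubling-third}. I would first try Kneser's theorem on sums like $\Sigma_2(A_0)+A_1+A_1'$, using \cref{thm:lev-two} and \cref{thm:HLS-third} to bound $|\Sigma_2(A_0)|$ from below by $3|A_0|/2$. If the stabilizer obstructs this, I would descend to a smaller quotient and recurse.
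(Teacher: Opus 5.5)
The overall structure of your proposal matches the paper: \cref{prop:four-large-prime} handles $p(G)\ge7$, and for $p\in\{3,5\}$ a fibre larger than $h/2$ is handled with \cref{prop:three-fold}(b), \cref{prop:four-half} and the odd-order two-fold covering fact. The gap is in the balanced case, which is where the substance of the proof lies.

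For $p=3$ with all fibres at most $h/2$, your claim that every pairwise sum exceeds $2h/3+1$ is false. From $a_0+a_1+a_2\ge h+2$ you only get $a_j+a_k\ge h+2-a_i$, and this exceeds $2h/3+1$ only when $a_i\le h/3$; sizes about $(h/2,h/4,h/4)$ already fail. Since $\{j,k\}$ is the only pair of distinct fibres with $2j+2k=i$, \cref{lem:mixed-doubling-third}(i) covers $i+H$ only when $a_i\le h/3$.

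The same failure occurs for $p=5$, and \cref{lem:quotient-selection} cannot repair it, because it controls quotient images, not full cosets. Take $a_0=(h-1)/2$ and $a_1=\cdots=a_4\approx7h/24$. Then $|A|>5h/3$, yet for the target $H$ the only pairs $\{1,4\},\{2,3\}$ have totals about $7h/12<2h/3$, and $a_0<h/2$. You also do not address the $p=5$ sub-cases with a fibre above $h/2$. These are organised by how many non-zero fibres are occupied, and need a separate argument when a needed fibre has at most $20$ elements.

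What is missing is the case $h/3<a_i\le h/2$. For $p=5$, after translating the target to $H$, this is $i=0$ with $\{j,k\}=\{x,-x\}$ the pair of largest total, which exceeds $7h/12$. You name the right shape $X+A_j+A_k$ with $X=\Sigma_2(A_i)$, and the right bound $|X|\ge3a_i/2$ from \cref{thm:HLS-third}. That bound applies because every proper subgroup of $H$ has order below $a_i$, so a translate of $A_i$ generates $H$. But ``descend to a smaller quotient and recurse'' is not an argument, and the density hypotheses would not survive such a descent.

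What is needed is a direct stabilizer count. Suppose $X+A_j+A_k\ne i+H$, with stabilizer $L$. Kneser gives $|X+L|+|A_j+L|+|A_k+L|\le h+|L|$, while the left side is at least $|X|+a_j+a_k>7h/6$. Hence $|L|>h/6$ and $[H:L]\in\{3,5\}$, with only $5$ possible when $p=5$.

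For index $3$, the four-coset budget puts $A_j$ and $A_k$ in single cosets, so $a_i\ge|L|+2$. Restricted pairs from $A_i$ then meet all three classes of $H/L$, giving five occupied cosets, a contradiction.

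For index $5$, the budget gives $a_i\ge2|L|+2$, or $a_0>7h/15$ when $p=5$. Reapplying $|X|\ge3a_i/2$ puts $X$ in four cosets, while $A_j,A_k$ need three, exceeding the six-coset bound.

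This coset count is the core of the proof, and it is absent from your proposal.
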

\begin{proof}
For $p(G)\ge7$, the conclusion follows from \cref{prop:four-large-prime}.
We treat the quotients of orders three and five separately. In each case we either find a large fibre that already supplies the kernel part, or fix a target coset and use Kneser's theorem to show that the relevant fibre sumset fills it.

\proofcase{Case $p(G)=3$}
Choose $G/H\cong C_3$, and write $h=|H|>201$ and $a_i=|A_i|$ for $i\in C_3$.
The size hypothesis and integrality give
\begin{equation}\label{eq:balanced-p3-total}
a_0+a_1+a_2=|A|\ge h+2.
\end{equation}
In particular, at least two fibres are non-empty.

\proofcase{Large fibres}
Suppose only $A_i,A_j$ are occupied, with $a_i\ge a_j$.
Then $a_j\ge2$ and $a_i>(h+1)/2$.
The odd-order two-fold covering fact \cite[Lemma~2.2]{GallardoEtAl},
\cref{prop:three-fold}(b), and \cref{prop:four-half} give
\[
\Sigma_2(A_i)=2i+H,\qquad
\Sigma_3(A_i)=H,\qquad
\Sigma_4(A_i)=i+H.
\]
Using respectively $(4,0)$, $(3,1)$ and $(2,2)$ elements from these two fibres gives the three quotient sums $i$, $j$ and $2i+2j$. Thus
$\Sigma_4(A)=G$.

Now suppose all three fibres are non-empty.  If $a_i>h/2$, and $j,k$ are
the other two classes, then directly
\[
\Sigma_4(A_i)=i+H,\qquad
\Sigma_3(A_i)+A_j=j+H,\qquad
\Sigma_3(A_i)+A_k=k+H.
\]
Thus we may assume
\begin{equation}\label{eq:balanced-p3-upper}
a_0,a_1,a_2\le h/2.
\end{equation}

\proofcase{A class $i$ with $a_i\le h/3$}
Fix $i\in C_3$, and let $j,k$ be the other two classes.
If $a_i\le h/3$, then
\[
a_j+a_k\ge h+2-a_i>\frac{2h}{3}+1.
\]
Together with \eqref{eq:balanced-p3-upper}, this implies
$a_j,a_k>h/6+1>21$. By \cref{lem:mixed-doubling-third}(i), $\Sigma_2(A_j)+\Sigma_2(A_k)=i+H$, since $2j+2k=i$ in $C_3$.

\proofcase{A class $i$ with $h/3<a_i\le h/2$}
Put $X=\Sigma_2(A_i)$.  Every proper subgroup of $H$ has order at most
$h/3$, so translating an element of $A_i$ to zero produces a generating
subset of $H$.  Its size exceeds $21$, and \cref{thm:HLS-third} gives
\begin{equation}\label{eq:balanced-p3-X}
|X|\ge\frac32a_i.
\end{equation}
We claim that
\begin{equation}\label{eq:balanced-p3-target}
X+A_j+A_k=i+H.
\end{equation}
The quotient sum is correct because $2i+j+k=i$.
Suppose the claim fails, and let $L\le H$ be the stabilizer of the left
side.  This non-empty sum is $L$-periodic and omits at least one
$L$-coset.  Kneser's theorem therefore gives
\begin{equation}\label{eq:p3-kneser-short}
|X+L|+|A_j+L|+|A_k+L|\le h+|L|.
\end{equation}
By \eqref{eq:balanced-p3-total} and \eqref{eq:balanced-p3-X}, the left side is at least $\frac32a_i+a_j+a_k=|A|+a_i/2>7h/6+2$.
Hence $|L|>h/6+2$, so the odd index $[H:L]$ is either $3$ or $5$.
Write $d=|L|$.  We rule out each index by counting occupied cosets.

If $[H:L]=3$, then $|X|>3d/2$, so $X$ occupies at least two $L$-cosets.
The bound \eqref{eq:p3-kneser-short} permits at most four occupied cosets
in total.  Since $A_j,A_k$ are non-empty, $X$ must occupy exactly two
and each of $A_j,A_k$ exactly one.  Thus
\[
a_j,a_k\le d,\qquad a_i\ge(3d+2)-2d=d+2.
\]
If $A_i$ occupies exactly two $L$-cosets, then $a_i\ge d+2$ implies
that each contains at least two elements.  Hence the restricted quotient
sums $2u,u+v,2v$ are all realized and give the three classes of
$H/L\cong C_3$.  If $A_i$ occupies all three cosets, pairs from different
cosets again realize all three classes.  Either way, $X$ occupies at least
three $L$-cosets, giving at least five occupied $L$-cosets in total,
contrary to the four-coset bound from \eqref{eq:p3-kneser-short}.

If $[H:L]=5$, then $|X|>5d/2$, so $X$ occupies at least three cosets.
The six-coset bound \eqref{eq:p3-kneser-short} implies
\[
a_j+a_k\le3d,\qquad a_i\ge(5d+2)-3d=2d+2.
\]
Applying \eqref{eq:balanced-p3-X} once more gives $|X|\ge3d+3$.
Now $X$ occupies at least four cosets, while
\[
a_j+a_k=|A|-a_i\ge h+2-h/2>2d
\]
requires at least three in total for $A_j,A_k$.
Seven occupied cosets contradict the six-coset bound from
\eqref{eq:p3-kneser-short}.  This proves
\eqref{eq:balanced-p3-target}.  Since $i+H$ was arbitrary,
$\Sigma_4(A)=G$.

\proofcase{Case $p(G)=5$}
We prove the stronger assertion under $a:=|A|>N/3$.
Choose $G/H\cong C_5$, and put $h=|H|\ge125$ and $a_i=|A_i|$ for $i\in C_5$.
Indeed, $5h>605$ and every prime divisor of $h$ is at least five.

\proofcase{A fibre larger than $h/2$}
Translate so that $a_0>h/2$.
By \cref{prop:three-fold}(b) and \cref{prop:four-half},
\[
\Sigma_3(A_0)=\Sigma_4(A_0)=H.
\]
Thus $H$ is covered, and so is every occupied non-zero fibre class, since
$\Sigma_3(A_0)+A_x=x+H$.
If a second fibre satisfies $a_x>h/2$, then
\[
\Sigma_2(A_0)+\Sigma_2(A_x)=2x+H,\qquad
\Sigma_3(A_x)+A_0=3x+H,\qquad
\Sigma_4(A_x)=4x+H.
\]
The first equality uses \cref{lem:mixed-doubling-third}(ii), and together
the three equalities give all five cosets.

We may therefore assume $a_x\le h/2$ for every $x\ne0$.
The number of elements outside $A_0$ satisfies
$a-a_0>5h/3-a_0\ge2h/3$, so at least two non-zero fibres are occupied.
If all four are occupied, there is nothing left to prove.

If exactly three non-zero fibres are occupied, let $s$ be the missing
class, put $u=s/2$, and call the other occupied classes $x,y$.
In $C_5$ we have $2u=x+y=s$, and
\[
a_0+a_u=a-a_x-a_y>\frac{5h}{3}-h=\frac{2h}{3}.
\]
For $a_u\ge21$, by \cref{lem:mixed-doubling-third}(ii) we have
$\Sigma_2(A_0)+\Sigma_2(A_u)=s+H$.
For $a_u\le20$, we instead have
$a_0>2h/3-20>(h+1)/2$, so $\Sigma_2(A_0)=H$ and
$\Sigma_2(A_0)+A_x+A_y=s+H$.

If exactly two non-zero fibres $A_x,A_y$ are occupied, then
$a_x,a_y\le h/2$ gives $a_x+a_y\le h$, and hence $a_0>2h/3$.
Also $a_x+a_y=a-a_0>2h/3$, so each of $a_x,a_y$ is greater than $h/6$ and in particular at least two. Since $a_0>2h/3$, the odd-order two-fold covering result \cite[Lemma~2.2]{GallardoEtAl}, \cref{prop:three-fold}(b), and \cref{prop:four-half} give
\[
\Sigma_4(A_0)=H,\qquad
\Sigma_3(A_0)+A_x=x+H,\qquad
\Sigma_3(A_0)+A_y=y+H,
\]
\[
\Sigma_2(A_0)+\Sigma_2(A_x)=2x+H,\qquad
\Sigma_2(A_0)+\Sigma_2(A_y)=2y+H,\qquad
\Sigma_2(A_0)+A_x+A_y=x+y+H.
\]
Since $\{0,x,y,2x,2y,x+y\}=C_5$, all five cosets are covered.
This completes the large-fibre case.

\proofcase{All fibres have size at most $h/2$}
Assume $a_i\le h/2$ for every $i$.  Fix $g\in G$.  We show that
$g+H\subseteq\Sigma_4(A)$.  Choose $c\in G$ with $4c=g$, and put
$B=A-c$.  Since $\Sigma_4(B)=\Sigma_4(A)-g$, it is enough to prove
$H\subseteq\Sigma_4(B)$.  Translation only permutes the fibres, so we
relabel the fibres of $B$ as $A_i$, write $a_i=|A_i|$, and put $b=a_0$.

If $b\le h/3$, the four non-zero fibres have total size greater than
$4h/3$.  Hence, for some $x\in C_5\setminus\{0\}$,
$a_x+a_{-x}>2h/3$.  Both sizes exceed $h/6>20$, so \cref{lem:mixed-doubling-third}(ii) gives $\Sigma_2(A_x)+\Sigma_2(A_{-x})=H$.

It remains to consider $h/3<b\le h/2$.  Put $X=\Sigma_2(A_0)$ and
choose $x\in C_5\setminus\{0\}$ such that $a_x+a_{-x}$ is maximal.
Then
\begin{equation}\label{eq:balanced-p5-pair}
a_x+a_{-x}\ge\frac{a-b}{2}>\frac{7h}{12}.
\end{equation}
Both fibres are non-empty because each has size at most $h/2$.
A translate of $A_0$ generates $H$, and \cref{thm:HLS-third} gives
\begin{equation}\label{eq:balanced-p5-X}
|X|\ge\frac32b.
\end{equation}
Consequently
\[
|X|+a_x+a_{-x}\ge\frac32b+\frac{a-b}{2}
>b+\frac{5h}{6}>\frac{7h}{6}.
\]
Suppose $X+A_x+A_{-x}\ne H$ and let $L$ be its stabilizer.
Exactly as in \eqref{eq:p3-kneser-short}, Kneser's theorem gives
\begin{equation}\label{eq:p5-kneser-short}
|X+L|+|A_x+L|+|A_{-x}+L|\le h+|L|.
\end{equation}
Thus $|L|>h/6$.  Since all prime divisors of $h$ are at least five,
necessarily $[H:L]=5$.

We use this six-coset bound twice.  First, $|X|>h/2$ requires at least
three $L$-cosets, leaving at most three for $A_x,A_{-x}$ in total.
Hence $a_x+a_{-x}\le3h/5$.  By \eqref{eq:balanced-p5-pair},
$a-b\le2(a_x+a_{-x})\le6h/5$, and therefore
$b>5h/3-6h/5=7h/15$.  By \eqref{eq:balanced-p5-X},
$|X|>(3/2)(7h/15)=7h/10$.  Thus $X$ occupies at least four
$L$-cosets.  But \eqref{eq:balanced-p5-pair} requires at least three
in total for $A_x,A_{-x}$, because $7h/12>2h/5$.
Seven occupied cosets contradict the six-coset bound from
\eqref{eq:p5-kneser-short}.  Hence $X+A_x+A_{-x}=H$.

Thus $H\subseteq\Sigma_4(B)$, and consequently
$g+H\subseteq\Sigma_4(A)$.  Since $g$ was arbitrary,
$\Sigma_4(A)=G$.
\end{proof}

\subsection{Five-fold covering and the remaining lengths}\label{subsec:long-lengths}
We use two standard results in quotient form.  Diderrich's theorem
\cite[Theorem~1]{Diderrich}, in the form recorded in
\cite[Remark~2]{GGH}, states that if $H$ has odd order,
$B\subset H$, $K\le H$, $\pi:H\to H/K$, and $1\le r<|B|$, then
\begin{equation}\label{eq:Diderrich}
|\pi(\Sigma_r(B))|\ge|\pi(B)|.
\end{equation}
We also use Girard--Griffiths--Hamidoune \cite[Theorem~5.1]{GGH}: for a sequence
$S$ with $|S|>|\supp S|$ and $2\le r\le|S|-2$,
\begin{equation}\label{eq:GGH}
|\Sigma_r(S)|\ge|\supp S|,
\end{equation}
with equality if and only if \textup{(i)} $\supp S$ is a coset of a subgroup,
\textup{(ii)} $|\supp S|=2$ and exactly one support element occurs at least
twice, or \textup{(iii)} $\supp S$ is a three-term arithmetic progression and
only its middle term occurs at least twice.

\begin{lemma}\label{lem:three-fibre}
Let $H$ have odd order $h$, and let $B_0,B_1,B_2\subset H$ be non-empty with $|B_0|+|B_1|+|B_2|\ge h+2$. Let $2\le r_i\le|B_i|-2$ for $i=1,2$, allowing also $(|B_i|,r_i)=(3,2)$, and put $S=B_0+\Sigma_{r_1}(B_1)+\Sigma_{r_2}(B_2)$. If $S\ne H$, let $K=\Stab(S)$, $q=[H:K]$, and let $b_i$ be the number of $K$-cosets meeting $B_i$. Then $b_1=b_2=1$ and $b_0=q-1$. Moreover, every occupied $K$-coset of each $B_i$ contains at least two elements.
\end{lemma}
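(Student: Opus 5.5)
The plan is to use Kneser's theorem in the quotient $H/K$ to get an upper bound on the total number of occupied cosets, to match it against the lower bound $|B_0|+|B_1|+|B_2|\ge h+2$, and then to use the equality case of \eqref{eq:GGH} to exclude $b_1\ge2$ and $b_2\ge2$.

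\emph{Counting cosets.} Assume $S\ne H$. Then $K=\Stab(S)$ is a proper subgroup, so $q>1$. Write $k=|K|$ and $\pi:H\to H/K$. Let $c_i$ be the number of $K$-cosets meeting $\Sigma_{r_i}(B_i)$, for $i=1,2$. The image $\pi(S)$ is an aperiodic proper subset of $H/K$, so it has at most $q-1$ elements. Kneser's theorem (\cref{thm:kneser}, applied with three summands) gives
\[
q-1\ \ge\ b_0+c_1+c_2-2 .
\]
Since $1\le r_i<|B_i|$, Diderrich's bound \eqref{eq:Diderrich} gives $c_i\ge b_i$. Hence $b_0+b_1+b_2\le q+1$.

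On the other hand, $|B_i|\le b_ik$, so
\[
h+2\ \le\ \sum_i|B_i|\ \le\ (b_0+b_1+b_2)k .
\]
If $b_0+b_1+b_2\le q$, the right side is at most $qk=h$, a contradiction. So every inequality above is tight:
\[
b_0+b_1+b_2=q+1,\qquad c_i=b_i,\qquad \sum_i\bigl(b_ik-|B_i|\bigr)\le k-2 .
\]
The last inequality forces $k\ge3$. It also shows that every occupied $K$-coset of every $B_i$ misses at most $k-2$ of its elements, so it contains at least two elements of $B_i$. This is the final assertion of the lemma.

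\emph{Excluding $b_1\ge2$.} This is the main step; the case of $b_2$ is symmetric. View $\pi(B_1)$ as a sequence $T$ in $H/K$ with $|T|=|B_1|$. Then $\pi(\Sigma_{r_1}(B_1))=\Sigma_{r_1}(T)$ and $\supp T=\pi(B_1)$ has $b_1$ elements. By the previous step each support element has multiplicity at least two, so $|T|>|\supp T|$.

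First dispose of the exceptional pair $(|B_1|,r_1)=(3,2)$. If $b_1\ge2$, two cosets each holding at least two elements would give $|B_1|\ge4$, so this case cannot arise. Otherwise $2\le r_1\le|T|-2$, and \eqref{eq:GGH} together with $c_1=b_1$ puts us in its equality case. Alternatives (ii) and (iii) need some support element of multiplicity one, which we have just excluded. That leaves alternative (i): $\supp T$ is a coset of a subgroup $L\le H/K$ with $|L|=b_1\ge2$. Then $\Sigma_{r_1}(T)$ has exactly $|\supp T|$ elements and is an $L$-coset, since it is stable under the equality characterization. Consequently $\pi(S)$ is $L$-periodic, which contradicts the aperiodicity of $\pi(S)$ in $H/K$.

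\emph{Conclusion and the expected difficulty.} The previous step gives $b_1=b_2=1$, and then $b_0=q+1-2=q-1$. I expect the delicate point to be the equality step. One has to justify that in alternative (i) of \eqref{eq:GGH} the sumset $\Sigma_{r_1}(T)$ really is a full $L$-coset, which follows from $|\Sigma_{r_1}(T)|=|L|$ and the structure statement. One also has to check that the hypothesis $r_1\le|T|-2$ of \eqref{eq:GGH} holds in every non-exceptional case.
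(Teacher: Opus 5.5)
Your proposal is correct and follows the paper's proof essentially step for step. You use Kneser's theorem in $H/K$ together with Diderrich's bound and compare it with the capacity estimate to force $b_0+b_1+b_2=q+1$. The deficiency count then gives both the two-element claim and the $(3,2)$ case. Finally, the equality case of \eqref{eq:GGH} makes $\pi(\Sigma_{r_i}(B_i))$ a nontrivial coset, which would be a period of $\pi(S)$.

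The one step you leave vague, that $\Sigma_{r_1}(T)$ is a full $L$-coset, is immediate. If $\supp T=x+L$, then $\Sigma_{r_1}(T)\subseteq r_1x+L$, and it has exactly $|L|$ elements.
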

\begin{proof}
Put $d=|K|$ and let $\pi:H\to H/K$.
The proof identifies the equality case in Kneser's theorem and then uses
the available mass to rule out the exceptional restricted-sum configurations.

For each restricted block, \eqref{eq:Diderrich} gives an image of size at
least $b_i$; the image of $B_0$ has size $b_0$.
Since $\pi(S)$ is aperiodic and proper, Kneser's theorem gives
\begin{equation}\label{eq:three-fibre-chain}
q-1\ge|\pi(S)|\ge b_0+b_1+b_2-2.
\end{equation}
On the other hand, the occupied cosets have enough capacity for the three sets:
\begin{equation}\label{eq:three-fibre-capacity}
(b_0+b_1+b_2)d\ge |B_0|+|B_1|+|B_2|\ge qd+2.
\end{equation}
The case $d=1$ is impossible.  As $b_0+b_1+b_2$ is an integer, the two
bounds force
\begin{equation}\label{eq:three-fibre-equality}
b_0+b_1+b_2=q+1.
\end{equation}
Every inequality in \eqref{eq:three-fibre-chain} is now an equality.
In particular, $|\pi(\Sigma_{r_i}(B_i))|=b_i$ for $i=1,2$.

The total capacity is $(q+1)d$, whereas the three sets have at least
$qd+2$ elements.  Their total deficiency from full occupied cosets is
therefore at most $d-2$.  A singleton occupied coset would contribute
deficiency $d-1$ on its own, which is impossible.
Thus every occupied coset of each $B_i$ contains at least two elements.

Fix $i\in\{1,2\}$.  If $(|B_i|,r_i)=(3,2)$, the deficiency argument already implies
$b_i=1$.  Otherwise project $B_i$ to $H/K$, retaining multiplicities,
and denote this sequence by $S_i$.
Every support element occurs at least twice, and
$2\le r_i\le|S_i|-2$.  Equality in \eqref{eq:GGH}, as classified in
\cite[Theorem~5.1]{GGH}, therefore forces $\supp S_i$ to be a subgroup coset:
the other two equality types each require a support point occurring only once.

The quotient restricted sumset is contained in the corresponding coset
of that subgroup and has the same cardinality, so it is the entire coset.
A non-trivial such subgroup would then be a period of $\pi(S)$,
contrary to its aperiodicity.  Hence $b_i=1$ for $i=1,2$, and
\eqref{eq:three-fibre-equality} gives $b_0=q-1$.
\end{proof}

\begin{proposition}\label{prop:sharp-five}
Let $G$ be a finite abelian group of odd order $N>605$, and let $A\subset G$.
If $|A|>N/3+1$, then $\Sigma_5(A)=G$.
\end{proposition}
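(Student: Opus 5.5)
The plan mirrors the proof of \cref{prop:sharp-four}. Fix $g\in G$ and treat the least prime $p(G)$ case by case.

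\emph{Case $p(G)\ge7$.} Here I would first try padding. By \cref{prop:four-large-prime}, $\Sigma_4(A\setminus\{a\})=G$ for any $a\in A$, since $|A\setminus\{a\}|>N/3$. This alone does not give $\Sigma_5(A)=G$, because $a$ is fixed. Instead I would use the representation principle \eqref{eq:representation-inclusion}: show $P_1+\Sigma_3(A)=G$, or $P_3+A=G$, via a Kneser argument on the cosets occupied, as in \cref{prop:five-half}.

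The bound \eqref{eq:representation-count} gives $|P_1|\ge (m(m-1)-2N)/(m-2)\approx m-2N/m\approx N/3-6$ when $m\approx N/3$. Hence $|P_1|+|\Sigma_3(A)|$ must exceed $N$, which needs $|\Sigma_3(A)|$ of size about $2N/3$. That is not available from $m>N/3$ directly, so this route is weak. The fallback is to adapt the popular-sum argument to five summands. A missing restricted five-sum $0$ (after translating so that $5c=g$) together with $r(x)\ge 8$ forces $r(-x-a)$ small for all $a\in A$. Averaging over $a$ then gives a truncated-level upper bound comparable to \eqref{eq:popular-upper}, which \cref{thm:popular-sum} contradicts as before, using $|L|\le N/7$.

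\emph{Cases $p(G)\in\{3,5\}$.} Choose $G/H\cong C_p$ and use fibres $A_i$ with $a_i=|A_i|$. Large fibres $a_i>h/2$ are handled as in \cref{prop:sharp-four}. We have $\Sigma_2,\Sigma_3,\Sigma_4,\Sigma_5$ of such a fibre equal to a full coset, by \cite[Lemma~2.2]{GallardoEtAl}, \cref{prop:three-fold}(b), \cref{prop:four-half} and \cref{prop:five-half}. Splitting $5$ into parts distributed over the occupied fibres then realizes every quotient class. For $p=3$ the admissible compositions $(5,0),(4,1),(3,2),(3,1,1),(2,2,1)$ give all classes.

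In the balanced case (all $a_i\le h/2$), fix a target coset and choose a decomposition $5=r_0+r_1+r_2$ with $r_0=1$ and $r_1=r_2=2$ hitting it. For $p=3$, use the classes $i,j,k$ with $i+2j+2k$ the target. Then apply \cref{lem:three-fibre} to $S=A_i+\Sigma_2(A_j)+\Sigma_2(A_k)$; its hypothesis $\sum|B_i|\ge h+2$ is exactly \eqref{eq:balanced-p3-total}. If $S\ne H$, the lemma forces $b_1=b_2=1$ and $b_0=q-1$, so $A_j,A_k$ each lie in a single $K$-coset and $a_j,a_k\le |K|\le h/3$. We are free to permute which fibre plays the role of $B_0$, since $C_3$ has the symmetry $i+2j+2k$ over all assignments giving the same target (in $C_3$, $i+2j+2k=2(i+j+k)-i$). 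Choosing a different fibre as $B_0$ then forces a different pair of fibres to be small. Doing this twice makes all three fibres at most $h/3$, contradicting $a_0+a_1+a_2\ge h+2$. Empty fibres are handled by the large-fibre analysis.

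For $p=5$, pick $x$ with $a_x+a_{-x}$ maximal and argue exactly as in \eqref{eq:balanced-p5-pair}, with $X=\Sigma_3(A_0)$ or $\Sigma_2(A_0)$ plus one extra element. I expect the main obstacle to be the $p\ge7$ popular-sum adaptation: getting the constant right for five summands with $N>605$. This step is where I would spend effort first, possibly reducing to $\Sigma_4(A\setminus\{a\})+a$ over varying $a$ combined with a counting argument.
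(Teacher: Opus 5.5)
Your proposal has two genuine gaps. First, your reason for dropping padding is wrong. If $\Sigma_4(A\setminus\{a\})=G$, then for every $g\in G$ the element $g-a$ is a sum of four distinct elements of $A\setminus\{a\}$, so $G=a+\Sigma_4(A\setminus\{a\})\subseteq\Sigma_5(A)$. That $a$ is fixed does not matter, because the four-fold sumset is already all of $G$.

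This one line is the paper's entire proof for $p(G)\ge5$. Since $|A\setminus\{a\}|>N/3$, \cref{prop:four-large-prime} handles $p(G)\ge7$. The strengthened clause of \cref{prop:sharp-four}, which needs only $|A|>N/3$ when $p(G)=5$, handles $p(G)=5$. Your substitutes for these cases are therefore unnecessary, and as written they are incomplete: the five-summand popular-sum adaptation is not carried out, and the $p=5$ fibre sketch is unverified. Padding genuinely fails only for $p(G)=3$. There \cref{prop:sharp-four} needs $|A|>N/3+1$, and $|A\setminus\{a\}|$ may equal the extremal size $N/3+1$.

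Second, in the balanced $p=3$ case the symmetry you invoke does not exist. Since $\{i,j,k\}=C_3$, your own identity gives $i+2j+2k=2(i+j+k)-i=-i$. So a fixed target class $s$ forces the single-summand fibre to be $A_{-s}$, and choosing a different fibre as $B_0$ changes the target coset. For one fixed target, \cref{lem:three-fibre} gives only $a_j,a_k\le |K|\le h/3$. Together with $a_i\le (h-1)/2$, this is compatible with $a\ge h+2$, so no contradiction follows.

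The missing idea is to use the rest of the lemma's output and then switch to a different composition of $5$. Put $i=-s$ and $d=|K|$.
\begin{enumerate}[label=\textup{(\arabic*)},leftmargin=2.5em]
\item The capacity bound $qd+2\le a\le (qd-1)/2+2d$ forces $q=3$.
\item The lemma then says $A_i$ occupies two $K$-cosets, each containing at least two elements. Its restricted pairs therefore realize all three classes of $H/K$, so $\Sigma_2(A_i)+K=2i+H$.
\item Also $a_j+a_k\ge (3d+5)/2$, so the larger of these two fibres, say $A_j$, has more than $(d+3)/2$ elements in a single $K$-coset. By \cref{prop:three-fold}(b), applied inside $K$ with $d>67$, $\Sigma_3(A_j)$ is a full $K$-coset.
\item The composition $(2,3)$ then gives $\Sigma_2(A_i)+\Sigma_3(A_j)=2i+H=s+H$.
\end{enumerate}
Your large-fibre analysis for $p=3$ is fine and matches the paper's.
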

\begin{proof}
For $p(G)\ge5$, delete any one element of $A$ and apply
\cref{prop:four-large-prime} or the stronger $p=5$ assertion in
\cref{prop:sharp-four}.  The remaining set has size greater than $N/3$;
adding the deleted element gives a restricted five-sum for every element of $G$.

Now let $p(G)=3$, choose $G/H\cong C_3$, and put
$h=|H|>201$, $a=|A|\ge h+2$, and $a_i=|A_i|$.
Translate so that $A_0$ is largest.

If $a_0>h/2$, choose $T\subset A_0$ with $|T|=(h+1)/2$.
By \cref{prop:three-fold}(b) and \cref{prop:four-half,prop:five-half},
\begin{equation}\label{eq:absorber-345-605}
\Sigma_3(T)=\Sigma_4(T)=\Sigma_5(T)=H.
\end{equation}
Since $a\ge h+2$, there are distinct $y_1,y_2\in A\setminus A_0$.
Their non-zero quotient residues satisfy
\[
\{0,\pi(y_1),\pi(y_2),\pi(y_1)+\pi(y_2)\}=C_3.
\]
For any quotient class, select zero, one, or two of these elements and use
respectively five, four, or three elements of $T$ to complete the sum in $H$.
The two parts are disjoint, so this proves the claim in this case.

Assume every $a_i\le(h-1)/2$.  Then each $a_i\ge3$, since the other two
fibres have total size at most $h-1$.
Fix a class $s\in C_3$, put $r=-s$, and let $j,t$ be the two classes distinct from $r$. Since $r+2j+2t=s$, the sum $A_r+\Sigma_2(A_j)+\Sigma_2(A_t)$ lies in $s+H$. If it equals $s+H$, we are done. Suppose otherwise, and let $K$ be its stabilizer, $d=|K|$, and $q=[H:K]$.
By \cref{lem:three-fibre}, each of $A_j,A_t$ occupies one $K$-coset, and $A_r$ occupies $q-1$. Hence $qd+2\le a\le(qd-1)/2+2d$, which forces $q=3$.  Moreover, both occupied $K$-cosets of $A_r$ contain
at least two elements.  Their restricted pairs realize all three classes
of $H/K$, and therefore
\begin{equation}\label{eq:Ar-double-605}
\Sigma_2(A_r)+K=2r+H.
\end{equation}

The mass in the other two fibres satisfies $a_j+a_t\ge(3d+5)/2$.
Without loss of generality, $a_j\ge a_t$. Then $a_j>(d+3)/2$ and
$d=h/3>67$. By \cref{prop:three-fold}(b), applied to a translate of
$A_j$ inside $K$, $\Sigma_3(A_j)$ is a full $K$-coset. Combining it with
\eqref{eq:Ar-double-605} gives
\[
\Sigma_2(A_r)+\Sigma_3(A_j)=2r+H=s+H.
\]
Thus $s+H$ is covered in either case. Since $s$ was arbitrary, $\Sigma_5(A)=G$.
\end{proof}

The four- and five-fold cases provide the base cases, from which we extend the odd-order covering result to all longer lengths.

\begin{proposition}\label{prop:one-third-long}
Let $G$ be a finite abelian group of odd order $N>605$, and let $A\subset G$.
If $|A|>N/3+1$, then
\[
\Sigma_k(A)=G\qquad(6\le k\le |A|-6).
\]
\end{proposition}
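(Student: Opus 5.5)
By \cref{lem:symmetry}(i) it is enough to treat $6\le k\le |A|/2$. I would also first pass, via \cref{lem:symmetry}(ii), to sets of the minimal admissible size $n=\floor{N/3+1}+1$. Plain padding does not work: deleting $k-4$ elements and invoking \cref{prop:sharp-four} on the rest drops the set below the threshold. So I would split each required $k$-sum into a \emph{core} part and a \emph{selector} part. The core is a restricted $j$-sum taken inside fibres of a quotient $\pi:G\to C_p$ with kernel $H$, chosen so that it fills an entire $H$-coset. The selector is a restricted $\ell$-sum, $\ell=k-j$, of unused elements, chosen by \cref{lem:quotient-selection} so that its image in $C_p$ is any prescribed class. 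Adding a full coset $i+H$ to a selector of image $\pi(g)-i$ covers $g$, and disjointness gives a restricted $k$-sum. The whole argument therefore reduces to producing, for each $k$, a disjoint pair (core, selector) with $j+\ell=k$, where the core covers a full coset and $S_\ell\ge p+\ell-1$ holds for the selector set $M$.

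For $p(G)\ge5$ I would choose the core inside one fibre, or a pair of fibres $A_x,A_{-x}$, exactly as in the proof of \cref{prop:sharp-four}. There we already have $\Sigma_4$ or $\Sigma_2+\Sigma_2$ filling a coset, using only $O(h)$ elements. The remaining mass $|M|\ge |A|-(\text{core})$ is spread over at least two fibres, because every fibre of a set at this density has at most $h$ elements. Hence $\sum_i\min\{r_i,\ell\}\ge\min\{|M|,2\ell+\ldots\}$, which I expect to give $S_\ell\ge p+\ell-1$ throughout $\ell\le |A|/2-4$. When $\ell$ exceeds what $M$ can supply, I would lengthen the core instead. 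Inside the kernel, a fibre $A_0$ with $a_0>h/2$ gives $\Sigma_j(A_0)=H$ for all $3\le j\le a_0-3$, by \cref{prop:three-fold}(b), \cref{prop:four-half}, \cref{prop:five-half}, and an induction on $N$ applied to the statement itself in $H$. That induction is valid because density $>1/2>1/3+1/h$, and small $h$ is handled by \cref{thm:roth-lempel}-type/base computations. Balanced fibres are handled by the mixed cores $\Sigma_{r_1}(A_j)+\Sigma_{r_2}(A_t)+A_r$, controlled through \cref{lem:three-fibre}, where $r_1,r_2$ range over $[2,|A_i|-2]$.

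For $p(G)=3$ the selector can be tiny, since $H$ has index only three and the core may absorb almost all of $A$. I would copy the two branches of \cref{prop:sharp-five}. In the large-fibre branch, take $T\subset A_0$ with $|T|=(h+1)/2$ and use $\Sigma_j(T)=H$ for the whole middle range of $j$; this again uses the induction in $H$, since $|T|>h/2$. Two extra elements outside $A_0$, used zero, one, or two at a time, select the quotient class, and any further $k$-length is absorbed by $\sigma_F$ for a fixed set $F$ of leftover elements of $A_0$. In the balanced branch $a_i\le(h-1)/2$, I would apply \cref{lem:three-fibre} with general $(r_1,r_2)$ and split $k-?=1+r_1+r_2$ suitably (or use $\Sigma_{r_0}(A_r)$ via Diderrich's bound \eqref{eq:Diderrich} in place of $A_r$). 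The stabilizer analysis then forces $q=3$ and two-element occupied cosets, so that by \eqref{eq:Ar-double-605} and \cref{prop:three-fold}(b) the core again fills $s+H$.

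The main obstacle is the bookkeeping of lengths. For every $k$ up to $|A|/2$, one must split $k$ among the fibres so that each fibre's restricted length stays in the range where the coset-filling statement holds, namely between $2$ or $3$ and the fibre size minus $2$ or $3$. At the same time enough unused elements must remain for the selector, or the padding set $F$ must be available. I would first try the greedy choice: put as many summands as allowed into the largest fibre, at most $a_0-3$ of them, and distribute the rest across the other fibres. I would check the edge cases $k$ near $6$, and $k$ near $|A|/2$ with nearly equal fibres, by hand against the constant $N>605$.
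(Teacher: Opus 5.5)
\textbf{The proposal has a genuine gap at short lengths.} The core-plus-selector scheme fails there for structural reasons, not because of constants. \Cref{lem:quotient-selection} needs $S_\ell=\sum_i\min\{r_i,\ell\}\ge p+\ell-1$. But each occupied fibre of $M$ contributes at most $\ell$, and a set above the threshold may occupy only about $p/3+1$ fibres.

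\emph{Examples.} If $p=5$ and $A$ lies in two $H$-cosets, every $M\subseteq A$ has $S_2\le4<6$. So at $k=6$ the selector lemma is unavailable for any $4$-term core. If $p=7$ and $A$ lies in three cosets, $S_2\le6<8$. Even for spread-out $A$, the density bound $S_\ell\ge\frac{\ell}{h}|M|$ beats $p+\ell-1$ only once $\ell$ exceeds an absolute constant. Your balanced $p=3$ branch fails in the same way at $k=6$: with one element from $A_r$ and $n_u+n_v=5$, $n_u,n_v\ge2$, only two quotient classes occur. So your expectation that $S_\ell\ge p+\ell-1$ throughout $\ell\le|A|/2-4$ is false, and checking $k$ near $6$ by hand cannot repair it.

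\emph{Missing idea: padding that loses no density.} Since $\binom{|A|}{2}/N>N/18+1/6$, some $\beta$ has $R\ge34$ pairwise disjoint restricted representations. For $k=4+2\ell\le64$ (resp.\ $k=5+2\ell\le63$), write $g-\ell\beta$ as a restricted $4$-sum (resp.\ $5$-sum) of the \emph{whole} set $A$, by \cref{prop:sharp-four} (resp.\ \cref{prop:sharp-five}). At most five of the pairs meet the elements used, so $\ell$ untouched pairs complete a restricted $k$-sum equal to $g$. Only $\ell=k-4\ge61$ then remains, and there the counting does give $S_\ell\ge p+\ell-1$.

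\textbf{Further steps that need repair.}

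\emph{Prime $N$.} If $N$ is prime there is no quotient at all. You need \cref{thm:DSH} directly: $k(|A|-k)+1\ge4(|A|-4)+1>N$.

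\emph{The core for $p\ge7$.} The proof of \cref{prop:sharp-four} supplies no fibre core here; it is a popular-sum argument. Also, $A_x\cup A_{-x}$ need not have more than $2h/3$ elements. Take instead the two largest fibres, whose total is at least $2|A|/p>2h/3$, and trim them to $T_i,T_j$ of total size $\lceil 2h/3\rceil$. Then $\Sigma_2(T_i)+\Sigma_2(T_j)=(2i+2j)+H$ by \cref{lem:mixed-doubling-third}(ii). When $h$ is prime and a fibre may have fewer than $21$ elements, use \cref{thm:DSH} instead. The trimming also gives $|M|>|A|/2\ge k$, which is what makes the selector feasible for large $\ell$; ``$O(h)$ elements'' does not give this.

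\emph{The induction inside $H$.} It has no base when $h\le605$, since \cref{thm:roth-lempel} concerns even order. It is also unnecessary. Take $|T|=(h+1)/2$, so $\Sigma_3(T)=\Sigma_4(T)=\Sigma_5(T)=H$, and add a filler of size $k-5$. Draw the filler from all of $A\setminus(T\cup\{y_1,y_2\})$, not just from $A_0$, since $a_0-|T|$ may be $0$.

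\emph{The balanced $p=3$ branch.} Let the \emph{smallest} fibre be the single-element part $B_0$ of \cref{lem:three-fibre}, and choose $n_u,n_v$ by \cref{lem:quotient-selection}. The lemma's conclusion then gives $|A|\le 3|K|\le h$ immediately, a contradiction. No fallback through \eqref{eq:Ar-double-605} is needed.
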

\begin{proof}
Put $a=|A|$.  By \cref{lem:symmetry}, it is enough to prove covering for
$6\le k\le a/2$.
We first use equal-sum pairs to cover a fixed interval of short lengths.
For the remaining lengths, we choose multiplicities with the required quotient sum
and then fill the corresponding coset with actual elements of the fibres.

Some sum $\beta$ has at least
\[
R\ge\left\lceil\frac{\binom a2}{N}\right\rceil
\ge\left\lceil\frac N{18}+\frac16\right\rceil\ge34
\]
unordered restricted representations.
Pairs with a fixed sum are disjoint, so these representations form a matching.
For $k=4+2\ell\le64$, use \cref{prop:sharp-four} to represent
$g-\ell\beta$ by four elements of $A$.
At most four matching pairs meet those elements, leaving at least
$R-4\ge30\ge\ell$ unused pairs.  Adding $\ell$ of them gives a restricted
$k$-sum equal to $g$.
For $k=5+2\ell\le63$, the same argument using \cref{prop:sharp-five}
leaves at least $R-5\ge29\ge\ell$ pairs.
Thus only $65\le k\le a/2$ remains.

If $N$ is prime, \cref{thm:DSH} already gives covering: throughout the
lower half,
\[
k(a-k)+1\ge4(a-4)+1>N.
\]
We may henceforth assume $N$ is composite.
Choose a quotient $\pi:G\to C_p$, where $p=p(G)$, and write $H=\ker\pi$, $h=|H|=N/p$, and $a_i=|A_i|$.
In particular $h\ge p$.

\proofcase{Case $p=3$}
Translate so that $A_0$ is a largest fibre.
If $a_0>h/2$, choose $T\subset A_0$ of size $(h+1)/2$ and distinct
$y_1,y_2\in A\setminus A_0$; the latter exist because $a\ge h+2$.
By \cref{prop:three-fold}(b) and \cref{prop:four-half,prop:five-half},
\[
\Sigma_3(T)=\Sigma_4(T)=\Sigma_5(T)=H.
\]
Choose a filler $F\subset A\setminus(T\cup\{y_1,y_2\})$ with
$|F|=k-5$.  The available number is sufficient since
\[
a-|T|-2-(k-5)\ge\frac a2-\frac{h+1}{2}+3>0.
\]
The subset sums of the two non-zero quotient residues of $y_1,y_2$ exhaust $C_3$. Select zero, one, or two of these elements to correct the quotient of $g-\sigma_F$, and complete using respectively five, four, or three elements of $T$. All parts are disjoint.

Now assume every fibre has size at most $h/2$.
Let $A_r$ be a smallest fibre and order the other two so that
$a_u\ge a_v\ge a_r$.
Because $h$ is odd, the other two fibres have total size at most $h-1$,
so $a_r\ge3$.  Also
\[
h+2\le a_u+a_v+a_r\le\frac{h-1}{2}+2a_v,
\]
and hence
\begin{equation}\label{eq:long-p3-sizes}
a_r\ge3,\qquad a_v\ge\frac{h+5}{4}.
\end{equation}

We want one summand from $A_r$, at least two from each of $A_u,A_v$,
and at least two elements left unused in each of the latter fibres.
After allowing for these two baseline and two spare elements, the
remaining capacities are $a_u-4,a_v-4$. Put $\ell=k-5$. To apply
\cref{lem:quotient-selection}, it is enough to check
\begin{equation}\label{eq:p3-selection-capacity}
\min\{a_u-4,\ell\}+\min\{a_v-4,\ell\}\ge\ell+2.
\end{equation}
If $a_u-4\ge\ell$, this follows at once from
\eqref{eq:long-p3-sizes}, since $a_v-4\ge2$. Otherwise both terms are
uncapped, and
\[
(a_u-4)+(a_v-4)=a-a_r-8\ge\frac{2a}{3}-8\ge\ell+2.
\]
Thus \cref{lem:quotient-selection} applies.

For a class $s\in C_3$, select an auxiliary $\ell$-sum with
quotient $s-r-2u-2v$.
Adding the baseline multiplicities gives integers $n_u,n_v$ with
\begin{gather*}
2\le n_u\le a_u-2,\qquad 2\le n_v\le a_v-2,\\
n_u+n_v=k-1,\qquad r+n_u u+n_v v=s.
\end{gather*}
We retain only these multiplicities, not the particular auxiliary
representing elements.  The full original fibres remain available for
the kernel-covering argument.

Consider
\[
A_r+\Sigma_{n_u}(A_u)+\Sigma_{n_v}(A_v)\subset s+H.
\]
If this sum were a proper subset of $s+H$, \cref{lem:three-fibre}
would put $A_u,A_v$ each in a single coset of its proper stabilizer
$K\le H$.
Since $A_r$ is smallest, we would then have
\[
a_r\le a_v\le a_u\le |K|,
\qquad a\le3|K|\le h,
\]
contrary to $a\ge h+2$.
Thus every class of $G/H$ is filled by restricted $k$-sums.

\proofcase{Case $p=5$}
Here $h\ge125$. Let $A_i,A_j$ be the two largest fibres, with $a_i\ge a_j$, and put $s=\lceil2h/3\rceil$.
Since $a_i+a_j\ge2a/5>2h/3$, we have $a_i+a_j\ge s$.
Moreover $a_j\ge21$: otherwise
$a\le h+4\cdot20<5h/3+1$, a contradiction. Choose
$T_i\subset A_i$ and $T_j\subset A_j$ with
$|T_i|,|T_j|\ge21$ and $|T_i|+|T_j|=s$. By
\cref{lem:mixed-doubling-third}(ii),
\begin{equation}\label{eq:p5-two-fibre-absorber}
\Sigma_2(T_i)+\Sigma_2(T_j)=(2i+2j)+H.
\end{equation}

Put $M=A\setminus(T_i\cup T_j)$ and $\ell=k-4\ge61$, and let $r_x$
be the fibre sizes of $M$. We claim
\begin{equation}\label{eq:p5-selection-capacity}
\sum_{x\in C_5}\min\{r_x,\ell\}\ge\ell+4.
\end{equation}
Indeed, $s<a/2$, so $|M|=a-s>a/2\ge k=\ell+4$. If every
$r_x<\ell$, this proves \eqref{eq:p5-selection-capacity}. If at least two
fibres have size at least $\ell$, then the left side is at least
$2\ell\ge\ell+4$. It remains to suppose that exactly one fibre, say the
$x$-fibre, has size at least $\ell$. If $x\in\{i,j\}$, then
$r_x\le h-21$ while $|M|>h$, and hence $|M|-r_x>21$. If
$x\notin\{i,j\}$ and $|M|-r_x\le3$, then the choice of $i,j$ gives
$a_i,a_j\ge r_x$, so $a\ge3r_x$. On the other hand,
$a=s+|M|\le s+r_x+3$, whence $r_x\le(s+3)/2$ and therefore
$a\le3(s+3)/2\le h+6$, contradicting $a>5h/3+1$. Thus in all cases
$|M|-r_x\ge4$, proving \eqref{eq:p5-selection-capacity}.

By \cref{lem:quotient-selection}, $\pi(\Sigma_\ell(M))=C_5$. Combining
this with \eqref{eq:p5-two-fibre-absorber}, we obtain
$G=\Sigma_2(T_i)+\Sigma_2(T_j)+\Sigma_\ell(M)\subseteq\Sigma_k(A)$.

\proofcase{Case $p\ge7$}
Let $A_i,A_j$ be the two largest fibres, with $a_i\ge a_j$, and set $s=\lceil2h/3\rceil$. Their combined size satisfies $a_i+a_j\ge2a/p>2h/3$, so $a_i+a_j\ge s$. Choose subsets $T_i\subset A_i$ and $T_j\subset A_j$ of total size $s$ such that
\begin{equation}\label{eq:large-p-two-fibre-absorber}
\Sigma_2(T_i)+\Sigma_2(T_j)=(2i+2j)+H.
\end{equation}

Suppose first that $h$ is prime. Since $ph>605$ and $p\le h$, we have
$h\ge29$. Also $a_j\ge2$, for otherwise
$a\le h+p-1\le ph/3+1$. Choose $|T_i|,|T_j|\ge2$ with total size
$s$. By \cref{thm:DSH}, either one of the two restricted pair sumsets
already fills its $H$-coset, or
$|\Sigma_2(T_i)|+|\Sigma_2(T_j)|\ge2s-6>h$. In either case
\eqref{eq:large-p-two-fibre-absorber} follows.

Suppose now that $h$ is composite. We have $a_j\ge21$. Indeed, if
$a_j\le20$, then $a\le h+20(p-1)$. For $p=7$, compositeness and
$7h>605$ give $h\ge91$; for $p\ge11$, all prime divisors of $h$ are at
least $p$, so $h\ge p^2$. In either case
$h+20(p-1)\le ph/3+1$, a contradiction. We may therefore choose
$|T_i|,|T_j|\ge21$ with total size $s$, and
\cref{lem:mixed-doubling-third}(ii) gives
\eqref{eq:large-p-two-fibre-absorber}.

Put $M=A\setminus(T_i\cup T_j)$ and $\ell=k-4\ge61$, and let $r_x$ be
the fibre sizes of $M$. Since $3\nmid h$, we have $s<2h/3+1$, and hence
$|M|=a-s>(p-2)h/3$. If $\ell\le h$, then
\[
\sum_x\min\{r_x,\ell\}\ge\frac{\ell}{h}|M|
>\frac{p-2}{3}\ell\ge p+\ell-1,
\]
where the last comparison is $\ell(p-5)\ge3(p-1)$.

For $\ell>h$, we have
$\sum_x\min\{r_x,\ell\}=|M|=a-s$. Since $a\ge2k$, this is at least
$k=\ell+4$. More precisely,
\[
|M|-\ell=a-s-k+4\ge\frac a2-s+4
>\frac{p-4}{6}h+\frac72>p-2.
\]
Thus integrality gives $|M|\ge p+\ell-1$. In either case
\cref{lem:quotient-selection} gives $\pi(\Sigma_\ell(M))=C_p$.
Together with \eqref{eq:large-p-two-fibre-absorber}, this yields
$G=\Sigma_2(T_i)+\Sigma_2(T_j)+\Sigma_\ell(M)\subseteq\Sigma_k(A)$.

All lower-half lengths have now been covered, and \cref{lem:symmetry}
completes the proof.
\end{proof}

\begin{proof}[Proof of \cref{thm:sharp-density}]
The four-fold endpoint follows from \cref{prop:sharp-four}, the five-fold
endpoint from \cref{prop:sharp-five}, and every length
$6\le k\le|A|-6$ from \cref{prop:one-third-long}.
By \cref{lem:symmetry}, the complementary endpoints $|A|-5$ and $|A|-4$
also cover $G$.
Therefore $\Sigma_k(A)=G$ for $4\le k\le|A|-4$.
The sharpness of the additive constant is given by the index-three instance
of the construction preceding \cref{thm:least-prime}.
\end{proof}

\begin{proof}[Proof of \cref{cor:critical-densities}]
The odd-order two-fold result \cite[Lemma~2.2]{GallardoEtAl} gives $c_2\le1/2$, and intervals in odd cyclic groups give the matching asymptotic lower bound. \Cref{thm:lev-three} gives $c_3\le2/5$ in odd order, while the union of two cosets of an index-five subgroup gives equality. For every $k\ge4$, \cref{thm:sharp-density} gives $c_k\le1/3$, while an index-three subgroup gives $c_k\ge1/3$.
\end{proof}

\section{The exact least-prime threshold}\label{sec:least-prime}
At length $2p-2$, the removal lemma leads to two cases. Either almost all of $A$ lies in one coset of an index-$p$ subgroup, or every element of $G$ has many representations as a sum of $2p-2$ elements of $A$. In the second case, we find a subset $T\subseteq A$ with $|T|\le |A|/2$ such that $\Sigma_{2p-2}(T)=G$. The remaining elements can then be added to obtain all longer lengths in the required range.

\begin{lemma}\label{lem:least-prime-rigidity}
Let $p$ be an odd prime, and let $G$ be a finite abelian group whose prime
divisors are all at least $p$.  Suppose $X_1,\ldots,X_{2p-2}\subseteq G$ satisfy
\[
|X_i|\ge\left(\frac1p-\frac{1}{10p^2}\right)|G|.
\]
If $X_1+\cdots+X_{2p-2}\ne G$, its stabilizer has index $p$, and at least
one $X_i$ lies in a single coset of the stabilizer.
\end{lemma}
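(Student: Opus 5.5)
Write $S=X_1+\cdots+X_{2p-2}$, $K=\Stab(S)$, $q=[G:K]$, $d=|K|=|G|/q$. Suppose $S\ne G$, so $q>1$. Since every prime divisor of $|G|$ is at least $p$, we have $q\ge p$. The plan is to apply Kneser's theorem (\cref{thm:kneser}) and then count $K$-cosets, much as in \cref{lem:three-fibre}. Let $b_i$ be the number of $K$-cosets meeting $X_i$. Because $S$ is $K$-periodic and proper, Kneser gives
\[
q-1\ge |S|/d\ge \sum_{i=1}^{2p-2} b_i-(2p-3),\qquad\text{so}\qquad \sum_i b_i\le q+2p-4 .
\]
On the other hand, $b_i\ge |X_i|/d\ge (1/p-1/(10p^2))q$, and $b_i$ is an integer.

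The first step rules out $q>p$. For such $q$ we have $q\ge p+2$, and in fact $q\ge p^2$ unless $q$ is itself a prime with $p<q$. Summing the lower bounds gives
\[
\sum_i b_i\ge (2p-2)\Bigl(\frac1p-\frac1{10p^2}\Bigr)q\approx \Bigl(2-\frac2p\Bigr)q .
\]
We need this to exceed $q+2p-4$. That reduces to $(1-2/p-O(1/p))q>2p-4$, which clearly holds once $q\ge p^2$ (using $p\ge3$; for $p=3$ the coefficient $1-2/3-\dots$ is about $0.3$, and $q\ge9$ is still too small). So we have to be more careful and use the ceiling: $b_i\ge\lceil (1/p-1/(10p^2))q\rceil$. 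This is the main obstacle. For $q$ in the range $p<q<p^2$, and especially for small $p$ such as $p=3$, the crude bound is not enough. The rounding has to be exploited together with the fact that $q$ has no prime factor below $p$, so $q\ge p+2$ and $q/p$ is not an integer in any harmful way.

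If the ceiling argument still fails, I would try a different route. A proper periodic sumset in $G/K$ is aperiodic, so we can apply Kneser (or, for cyclic prime-order quotients, Cauchy--Davenport, \cref{thm:CD}) iteratively to partial sums $X_1+\cdots+X_j$. Each added summand must then increase the occupied-coset count by at least $b_i-1\ge \lceil q/p-q/(10p^2)\rceil-1$. Already with $q\ge p+2$ this quantity is at least $1$. After $2p-2$ steps the total exceeds $q$ whenever $\lceil q(1/p-1/(10p^2))\rceil\ge 2$, i.e.\ when $q\ge 2p$ roughly. For $p<q<2p$, $q$ must be a prime (since $q$ is composite only if $q\ge p^2$). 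In that case I would apply \cref{thm:CD} directly in $C_q$: each $b_i\ge 2$, so the sum has size at least $\min\{q,\,2(2p-2)-(2p-3)\}=\min\{q,2p-1\}=q$, a contradiction. With that, $q=p$ is forced.

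Finally, take $q=p$. Each $b_i\ge\lceil 1-1/(10p)\rceil=1$, which gives nothing yet, but Kneser still gives $\sum_i b_i\le 3p-4$. Suppose every $b_i\ge2$. Then in $G/K\cong C_p$, \cref{thm:CD} gives
\[
|\pi(S)|\ge\min\{p,\,2(2p-2)-(2p-3)\}=p,
\]
so $S=G$, a contradiction. Hence some $b_i=1$, i.e.\ some $X_i$ lies in a single coset of $K$, and $K$ has index $p$, as claimed.
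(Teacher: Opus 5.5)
Your handling of $q=p$ and of the prime indices $p<q<2p$ is correct. There, Cauchy--Davenport in the prime-order quotient does the same job as the Kneser budget $\sum_i(b_i-1)\le q-2$ used in the paper.

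The gap is the remaining range $q\ge 2p+1$, which contains every composite index $q\ge p^2$ and every prime index $q>2p$. Your final argument there claims the coset count exceeds $q$ as soon as every $b_i\ge2$. But $b_i\ge2$ only yields $|\pi(S)|\ge 2(2p-2)-(2p-3)=2p-1$. That contradicts $|\pi(S)|\le q-1$ only when $q<2p$, which is the opposite regime. Also, $\lceil q(1/p-1/(10p^2))\rceil\ge2$ already holds for every $q\ge p+1$, so it cannot be the condition that marks $q\ge 2p$. As written, nothing excludes $q\ge 2p+1$.

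The fix is the linear bound you discarded; your numerical check of it was wrong. If $q\ge 2p+1$, insert $b_i\ge(1/p-1/(10p^2))q$ into $\sum_i(b_i-1)\le q-2$. This would give
\[
0\ge\Bigl((2p-2)\Bigl(\frac1p-\frac1{10p^2}\Bigr)-1\Bigr)q-(2p-4)=\frac{10p^2-22p+2}{10p^2}\,q-(2p-4).
\]
The coefficient of $q$ is positive, so the right side is increasing in $q$. At $q=2p+1$ it equals $(3p^2-9p+1)/(5p^2)>0$, a contradiction. Since $q$ is odd, $q=2p$ never occurs, so every $q>p$ is excluded.

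For $p=3$ the coefficient is $13/45\approx 0.29$, and already $q=7$ gives $91/45>2=2p-4$. For $q=9$ you even have $b_i\ge\lceil 9\cdot 29/90\rceil=3$, so $\sum_i b_i\ge 12>11=q+2p-4$; your claim that $q\ge 9$ is too small was an arithmetic slip.

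With this computation in place your proof becomes exactly the paper's. No ceilings, no iteration over partial sums, and no separate treatment of composite $q$ are needed.
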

\begin{proof}
Let $H$ be the stabilizer of $X_1+\cdots+X_{2p-2}$, put $q=[G:H]>1$,
and let $c_i$ count the $H$-cosets meeting $X_i$.
The quotient sum is aperiodic and proper.  Kneser's theorem therefore gives
\begin{equation}\label{eq:rigidity-budget}
\sum_{i=1}^{2p-2}(c_i-1)\le q-2,
\qquad
c_i\ge\left(\frac1p-\frac1{10p^2}\right)q.
\end{equation}
The second bound simply divides the size of $X_i$ by the coset size
$|G|/q$.  Since all prime divisors of $|G|$ are at least $p$, the index
$q\ge p$ is odd.

If $p<q<2p$, then
$(1/p-1/(10p^2))q>1$, so all $c_i\ge2$.
The left side of \eqref{eq:rigidity-budget} is then at least
$2p-2>q-2$, a contradiction.
For $q\ge2p+1$, the coefficient of $q$ in the lower bound obtained from
\eqref{eq:rigidity-budget} is positive, so its minimum occurs at $2p+1$:
\[
\sum_{i=1}^{2p-2}(c_i-1)-(q-2)\ge\frac{3p^2-9p+1}{5p^2}>0.
\]
This again contradicts \eqref{eq:rigidity-budget}.
The value $q=2p$ is impossible because $q$ is odd.  Thus $q=p$.

Now \eqref{eq:rigidity-budget} says $\sum_i(c_i-1)\le p-2$.
If every $c_i$ were at least two, this sum would be at least $2p-2$.
Hence some $c_i=1$, as required.
\end{proof}

The next lemma is the core combinatorial input in the proof of
\cref{thm:least-prime}.

\begin{lemma}\label{lem:counting-cover}
Fix an integer $s\ge3$ and $0<\delta\le1$.  Let $G$ be a finite abelian group
of odd order $N$, and let $A\subseteq G$.  Suppose every $g\in G$ has at least
$\delta N^{s-1}$ ordinary ordered representations as a sum of $s$ elements of $A$.
If
\begin{equation}\label{eq:counting-size-conditions}
N\ge\frac{2\binom{s}{2}}{\delta},
\qquad
N\exp\left(-\frac{\delta N}{2^{s+1}s^2}\right)<\frac12,
\end{equation}
then some $T\subseteq A$ satisfies $|T|\le |A|/2$ and
$\Sigma_s(T)=G$.
\end{lemma}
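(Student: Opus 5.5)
We take $T$ to be a random subset of $A$ and show that, with positive probability, it has both properties. We include each element of $A$ in $T$ independently with probability $1/2$. The condition $|T|\le|A|/2$ fails with probability at most $1/2$, by symmetry between $T$ and $A\setminus T$. More precisely, $\Pr(|T|\le|A|/2)\ge 1/2$, since $|T|$ and $|A|-|T|$ have the same distribution. It therefore suffices to show that
\[
\Pr\bigl(\exists g\in G:\ g\notin\Sigma_s(T)\bigr)<\tfrac12,
\]
and then a set $T$ with both properties exists.

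Fix $g\in G$. We first pass from ordinary to restricted representations. An ordered $s$-tuple in $A^s$ with two equal coordinates is determined by choosing the coinciding pair of positions ($\binom s2$ choices) and $s-2$ free coordinates, with the last coordinate then forced. Hence at most $\binom s2 N^{s-2}$ ordered representations of $g$ are degenerate. By the first condition in \eqref{eq:counting-size-conditions}, this is at most $\tfrac{\delta}{2}N^{s-1}$. So $g$ has at least $\tfrac{\delta}{2}N^{s-1}$ ordered representations with distinct entries, and therefore at least $\tfrac{\delta}{2s!}N^{s-1}$ unordered $s$-sets $\{a_1,\dots,a_s\}\subseteq A$ with sum $g$.

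Next we pick a large disjoint subfamily of these $s$-sets, so that the events ``this $s$-set lies in $T$'' are independent. Each element $a\in A$ lies in at most $s\cdot(\text{number of }(s-1)\text{-tuples})\le sN^{s-2}$ of these sets: fixing $a$ in one of $s$ positions and $s-2$ further coordinates determines the last. Each chosen $s$-set therefore meets at most $s\cdot sN^{s-2}=s^2N^{s-2}$ others. A greedy choice then yields a disjoint family of size
\[
F\ge \frac{\delta N^{s-1}/(2s!)}{s^2N^{s-2}}=\frac{\delta N}{2\,s!\,s^2}.
\]
This is where the constants must be matched to the target $\delta N/(2^{s+1}s^2)$. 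We would count unordered sets more carefully, for example by counting ordered tuples directly: an ordered tuple meets at most $s^2\cdot s!$-scaled others, and the $s!$ factors cancel, giving a disjoint family of size at least $\delta N/(2s^2)$. Any version with $F\ge \delta N/(2s^2)$, or at worst $F\ge \delta N/(2\cdot 2^{?}s^2)$, suffices.

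Each disjoint $s$-set lies in $T$ independently with probability $2^{-s}$, so
\[
\Pr\bigl(g\notin\Sigma_s(T)\bigr)\le(1-2^{-s})^F\le\exp(-F2^{-s})\le\exp\Bigl(-\frac{\delta N}{2^{s+1}s^2}\Bigr).
\]
A union bound over the $N$ choices of $g$ gives total failure probability below $1/2$, by the second condition in \eqref{eq:counting-size-conditions}. The main obstacle is bookkeeping the overlap count so that the greedy disjoint family has size at least $\delta N/(2s^2)$ without a stray $s!$ factor. We handle this by working throughout with ordered tuples: each ordered tuple shares an element with at most $s^2N^{s-2}$ ordered tuples, and tuples with the same underlying set only reduce the count.
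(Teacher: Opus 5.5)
Your proposal is correct and is essentially the paper's proof in probabilistic language. The paper also discards the at most $\binom{s}{2}N^{s-2}$ colliding representations; there the repeated value is forced because doubling is a bijection in odd order, and you should say this, since it is where the odd-order hypothesis enters. It then greedily extracts $r_g\ge\delta N/(2s^2)$ pairwise disjoint $s$-sets by deleting the at most $s^2N^{s-2}$ ordered representations meeting each chosen set, which is exactly your final ordered-tuple version, so you can drop the unordered detour with the stray $s!$. Finally it bounds the bad subsets by $2^{|A|}(1-2^{-s})^{r_g}$, and pairs each $T$ with $A\setminus T$ to get at least $2^{|A|-1}$ subsets of size at most $\lfloor |A|/2\rfloor$; this is the counting counterpart of your $\Pr(|T|\le|A|/2)\ge\tfrac12$.
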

\begin{proof}
Put $m=|A|$.  Fix $g\in G$.  We first discard representations with repeated entries.
For two positions $i<j$, an ordinary representation with $a_i=a_j$ has at most
$N^{s-2}$ choices: after choosing the other $s-2$ entries, the repeated value is
uniquely determined by $2a_i=g-\sum_{t\ne i,j}a_t$, since doubling is a
bijection in the odd-order group $G$.  Hence at most
$\binom{s}{2}N^{s-2}$ representations have a collision.  By the first condition
in \eqref{eq:counting-size-conditions}, at least
\begin{equation}\label{eq:counting-restricted}
\delta N^{s-1}-\binom{s}{2}N^{s-2}
\ge\frac{\delta}{2}N^{s-1}
\end{equation}
ordered representations have all entries distinct.

From these restricted representations we greedily extract pairwise disjoint
representing sets.  A fixed element $a\in A$ occurs in at most $sN^{s-2}$
ordered representations of $g$: choose its position and then $s-2$ of the
remaining entries, after which the final entry is forced.  Thus at most
$s^2N^{s-2}$ ordered representations meet any prescribed $s$-element set.
Starting with the representations counted in \eqref{eq:counting-restricted},
retain the underlying set of one representation, delete all representations
meeting it, and repeat.  This produces pairwise disjoint sets
$E_{g,1},\ldots,E_{g,r_g}\subseteq A$, each of size $s$ and with sum $g$, where
\begin{equation}\label{eq:counting-disjoint-reps}
r_g\ge\frac{(\delta/2)N^{s-1}}{s^2N^{s-2}}=\frac{\delta N}{2s^2}.
\end{equation}
This construction is carried out separately for each $g$; only the sets within
one such family need to be disjoint.

For a fixed $g$, let $\mathcal B_g=\{T\subseteq A:g\notin\Sigma_s(T)\}$.
A set $T\in\mathcal B_g$ cannot contain any complete block $E_{g,j}$.  There
are $2^s-1$ possible intersections with each block, independently across the
disjoint blocks, while the remaining $m-sr_g$ elements are unrestricted.  Hence
\[
|\mathcal B_g|\le(2^s-1)^{r_g}2^{m-sr_g}=2^m(1-2^{-s})^{r_g}
\le2^m\exp(-r_g/2^s)\le2^m\exp\left(-\frac{\delta N}{2^{s+1}s^2}\right),
\]
where we used $1-u\le e^{-u}$ and \eqref{eq:counting-disjoint-reps}.  Summing over $g\in G$ gives
\begin{equation}\label{eq:counting-all-bad}
\left|\bigcup_{g\in G}\mathcal B_g\right|\le\sum_{g\in G}|\mathcal B_g|\le2^mN\exp\left(-\frac{\delta N}{2^{s+1}s^2}\right)<2^{m-1}.
\end{equation}
The last inequality is the second condition in
\eqref{eq:counting-size-conditions}.

Finally, pair each subset $T\subseteq A$ with its complement $A\setminus T$.
There are $2^{m-1}$ pairs, and each pair has a member of size at most
$\lfloor m/2\rfloor$. Therefore
$\bigl|\{T\subseteq A:|T|\le\lfloor m/2\rfloor\}\bigr|\ge2^{m-1}$.
By \eqref{eq:counting-all-bad}, not all of these subsets belong to
$\bigcup_{g\in G}\mathcal B_g$.  Any one outside the union satisfies
$\Sigma_s(T)=G$.
\end{proof}

\begin{proof}[Proof of \cref{thm:least-prime}]
For $p=3$, the result is \cref{thm:sharp-density}.
We therefore assume $p\ge5$.

Set $s=2p-2$ and
$\rho=\min\{1,\rho_s(1/(10p^2))\}>0$, where $\rho_s$ is supplied by
\cref{lem:removal}.
Choose $N_0(p)$ large enough that every $N\ge N_0(p)$ satisfies
\begin{equation}\label{eq:least-prime-order-conditions}
\frac Np>605,\qquad
N\ge\frac{2\binom{s}{2}}{\rho},\qquad
N\exp\left(-\frac{\rho N}{2^{s+1}s^2}\right)<\frac12.
\end{equation}
Let $G$ and $A$ satisfy the hypotheses, and write $h=N/p$ and $m=|A|\ge h+p-1$.
By \cref{lem:symmetry}, it suffices to prove $\Sigma_k(A)=G$ for
$s\le k\le m/2$.
\proofcase{Case 1. Every $g\in G$ has more than $\rho N^{s-1}$ ordinary ordered $s$-term representations from $A$}
By \cref{lem:counting-cover}, there is $T\subseteq A$ such that
$|T|\le m/2$ and $\Sigma_s(T)=G$. This one set supplies all longer lengths
in the lower half. Indeed, fix $s\le k\le m/2$ and choose
$F\subseteq A\setminus T$ with $|F|=k-s$. The choice is possible because
$|A\setminus T|\ge\lceil m/2\rceil$. Then
$G=\sigma_F+\Sigma_s(T)\subseteq\Sigma_k(A)$. Every representation on the
left uses $s$ elements of $T$ and the fixed $k-s$ elements of $F$. Their
disjointness ensures that the resulting sum is restricted. Symmetry
completes this case.

\proofcase{Case 2. Some $g_0\in G$ has at most $\rho N^{s-1}$ ordinary ordered $s$-term representations from $A$}
We first obtain concentration in an index-$p$ coset, then use the few
necessary outside elements to choose the quotient class. Apply
\cref{lem:removal} to $x_1+\cdots+x_s=g_0$, with $x_i\in A$, using deletion
parameter $1/(10p^2)$.
For each position it gives a subset $X_i\subseteq A$ with
\[
|X_i|\ge m-\frac{N}{10p^2}
\ge\left(\frac1p-\frac1{10p^2}\right)N,
\qquad
 g_0\notin X_1+\cdots+X_s.
\]
By \cref{lem:least-prime-rigidity}, the stabilizer $H$ of this proper
ordinary sumset has index $p$, and one of the $X_i$ lies in a single
$H$-coset.
Consequently that coset contains at least $m-N/(10p^2)$ elements of $A$.
Translate it to $H$ using \cref{lem:translation}, and put $B=A\cap H$.
Since $|H|=h$, we obtain
\begin{equation}\label{eq:least-prime-dense-fibre}
\begin{aligned}
|B|&\ge m-\frac{h}{10p}\ge h+p-1-\frac{h}{10p}\ge\frac{49h}{50},\\
m&\le |B|+\frac{h}{10p}\le h+\frac{h}{10p}\le\frac{51h}{50}.
\end{aligned}
\end{equation}
Moreover, $|A\setminus H|=m-|B|\ge m-h\ge p-1$. Thus $B$ contains almost all of $H$, and $A\setminus H$ still contains at least $p-1$ elements.

Choose distinct $y_1,\ldots,y_{p-1}\in A\setminus H$, and let $\pi:G\to G/H\cong C_p$.
Each $\pi(y_i)$ is non-zero, so repeated Cauchy--Davenport gives
\begin{equation}\label{eq:least-prime-quotient-choice}
\{0,\pi(y_1)\}+\cdots+\{0,\pi(y_{p-1})\}=C_p.
\end{equation}
Indeed, its lower bound is
$\min\{p,2(p-1)-(p-2)\}=p$.
Each two-element set in this ordinary sum corresponds to a different $y_i$. Therefore selecting its non-zero term uses each $y_i$ at most once, even when some quotient residues coincide.

Fix $s\le k\le m/2$ and $g\in G$. By \eqref{eq:least-prime-quotient-choice}, some subset of $\{y_1,\ldots,y_{p-1}\}$ has the same quotient sum as $g$. Suppose it has $j$ elements, where $0\le j\le p-1$; after relabelling, write this subset as $\{y_1,\ldots,y_j\}$. After subtracting their sum from $g$, the
remainder lies in $H$ and has to be represented using $k-j$ elements.
Its lower endpoint satisfies
$k-j\ge(2p-2)-(p-1)=p-1\ge4$. For the upper endpoint,
\eqref{eq:least-prime-dense-fibre} and $h>605$ give
$k-j\le m/2\le51h/100<|B|-4$.
Also $|B|\ge49h/50>h/3+1$.
By \cref{thm:sharp-density}, applied inside $H$,
$g-y_1-\cdots-y_j\in\Sigma_{k-j}(B)$. Hence
$g\in y_1+\cdots+y_j+\Sigma_{k-j}(B)\subseteq\Sigma_k(A)$. Thus
$\Sigma_k(A)=G$ for $s\le k\le m/2$, and \cref{lem:symmetry} gives the
full interval.
\end{proof}

\Needspace{12\baselineskip}
\section{Asymptotic critical density}\label{sec:asymptotic-density}

The lower bound in \cref{thm:asymptotic-critical} is immediate from Bajnok's theorem. Indeed, there exists a set $B\subset G$ of size $\alpha_k(N)N$ with $kB\ne G$, and hence also $\Sigma_k(B)\ne G$. We now prove that this lower bound is asymptotically sharp for restricted sums.

\begin{proof}[Proof of \cref{thm:asymptotic-critical}]
Fix $k$ and $\eps$ as in the statement, and put $\eta=\eps/(2k)$.  Let $\rho_k(\eta)$ be supplied by
\cref{lem:removal}, and choose $N_0=N_0(k,\eps)$ so that
$N_0\ge \binom{k}{2}/\rho_k(\eta)$.
Let $N\ge N_0$, and suppose for a contradiction that
$g\notin\Sigma_k(A)$ for some $g\in G$.  Since $3\le k\le2p-3$, we have
$p\ge3$, so $G$ has odd order.

Every ordinary ordered representation
$a_1+\cdots+a_k=g$, with $a_i\in A$, has a collision.  For a prescribed pair $i<j$ with $a_i=a_j$, choosing the
other $k-2$ entries determines the repeated entry uniquely from
$2a_i=g-\sum_{t\ne i,j}a_t$, because doubling is a bijection on $G$.  Hence the number of ordinary
representations is at most
\[
\binom{k}{2}N^{k-2}\le \rho_k(\eta)N^{k-1}.
\]
Apply \cref{lem:removal} to the $k$ copies of $A$.  There are subsets
$A_1,\ldots,A_k\subseteq A$ such that
$|A_i|\ge |A|-\eta N$ for $1\le i\le k$, and
$g\notin A_1+\cdots+A_k$.
Set $B=A_1\cap\cdots\cap A_k$.  Then
\[
|B|\ge |A|-k\eta N
\ge \left(\alpha_k(N)+\frac{\eps}{2}\right)N
>\alpha_k(N)N.
\]
Since $\alpha_k(N)N$ is an integer, $|B|\ge\alpha_k(N)N+1$.
By Bajnok's formula \cite[Theorem~9]{BajnokCritical}, this forces
$kB=G$.  In particular $g\in kB\subseteq A_1+\cdots+A_k$, a contradiction.
Thus $\Sigma_k(A)=G$.
\end{proof}

\begin{proof}[Proof of \cref{cor:positive-density}]
Put $K=\floor{1/c}+1$, $L=\floor{2/c}+1$, and
$K_0=\max\{3,K\}$.
Then $jc>1$ for $K_0\le j\le L$, and $L(c/2)>1$.  Choose $\delta>0$ so small that $1/j+2\delta<c$ for
$K_0\le j\le L$ and $1/L+2\delta<c/2$.
For $K_0\le j\le L$, let $N_0(j,\delta)$ be an order threshold supplied
by \cref{thm:asymptotic-critical}.  Choose $p_0(c)$ so large that
\[
p_0(c)>\max\left\{\frac1\delta,\frac{L+3}{2},
\max_{K_0\le j\le L}N_0(j,\delta)\right\}.
\]
If $c>1/2$, enlarge $p_0(c)$ further so that $p_0(c)>1/(2c-1)$.

Let $N=|G|$ and assume $p(G)>p_0(c)$.  Every divisor $d>1$ of $N$ satisfies
$d\ge p(G)$, and therefore, for every $j\ge1$,
\begin{equation}\label{eq:alpha-upper}
\alpha_j(N)
=\max_{\substack{d\mid N\\ d>1}}\frac{\ceil{(d-1)/j}}{d}
\le \frac1j+\frac1{p(G)}.
\end{equation}
Indeed, $\ceil{x}\le x+1$ gives
$\ceil{(d-1)/j}/d<1/j+1/d$.

For $K_0\le j\le L$, \eqref{eq:alpha-upper} and the choice of $p_0(c)$ give
$\alpha_j(N)+\delta<c$.
Also $j\le L<2p(G)-3$.  Hence \cref{thm:asymptotic-critical}, applied with
error $\delta$, yields $\Sigma_j(A)=G$.  If $c>1/2$, then $K=2$.
The inequality $N\ge p(G)>1/(2c-1)$ gives $cN>(N+1)/2$, so the odd-order
two-fold covering result \cite[Lemma~2.2]{GallardoEtAl} gives
$\Sigma_2(A)=G$.  Thus all lengths $K\le j\le L$ are covered.

Now let $L\le k\le |A|/2$ and choose
$F\subset A$ with $|F|=k-L$.  Then $|A\setminus F|\ge |A|/2\ge cN/2$. By
\eqref{eq:alpha-upper}, $\alpha_L(N)+\delta<c/2$, so another application of
\cref{thm:asymptotic-critical} gives $\Sigma_L(A\setminus F)=G$. Therefore
$\sigma_F+\Sigma_L(A\setminus F)=G\subseteq\Sigma_k(A)$.
The filler $F$ is disjoint from $A\setminus F$. This proves every lower-half length $K\le k\le |A|/2$, and \cref{lem:symmetry} gives the upper half. Since $k$ is an integer, the resulting range is exactly $1/c<k<|A|-1/c$.

It remains to prove that the lower endpoint is best possible. Let $N$ be a sufficiently large prime and put $A=\{0,\ldots,M-1\}\subset C_N$, where $M=\lceil cN\rceil$. For every integer $1\le k\le1/c$, each restricted $k$-sum is the reduction modulo $N$ of an integer in $[\binom{k}{2},\,k(M-1)-\binom{k}{2}]$. Thus there are at most $k(M-k)+1$ possible sums. For $k=1$ this is $M<N$ once $N$ is large. For $k\ge2$, using $M<cN+1$ and $kc\le1$ gives
\[
k(M-k)+1<kcN-k(k-1)+1\le N-k(k-1)+1<N.
\]
Hence $\Sigma_k(A)\ne C_N$ for every integer $k\le1/c$, while $p(C_N)=N$ can be arbitrarily large. Hence the lower endpoint is best possible.
\end{proof}

\section{Application to MDS elliptic codes}\label{sec:elliptic}
Let $E/\F_q$ have identity $O$, put $G=E(\F_q)$ and $N=|G|$, and let
$P=\{P_1,\ldots,P_n\}\subset G$. For an $\F_q$-rational divisor $D$ whose
support is disjoint from $P$, the elliptic evaluation code is
\[
C_L(P,D)=\{(f(P_1),\ldots,f(P_n)):f\in\mathcal L(D)\}.
\]
For $0<k=\deg D<n$, evaluation is injective, and Riemann--Roch gives
$\dim C_L(P,D)=k$ \cite[Theorem~1.5.15]{Stichtenoth}. Let
$\kappa(D)\in E(\F_q)$ correspond to the class of $D-k(O)$ under
$\operatorname{Pic}^0(E)\cong E$.

\begin{lemma}\label{lem:MDS-criterion}
The code $C_L(P,D)$ is not MDS if and only if $\kappa(D)\in\Sigma_k(P)$.
\end{lemma}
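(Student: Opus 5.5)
The plan is to use the standard fact that an $[n,k]$ code is MDS exactly when its minimum distance is $n-k+1$. Since the length is $n$ and the dimension is $k$, the Singleton bound gives $d\le n-k+1$. So the code fails to be MDS precisely when some nonzero codeword vanishes at $k$ or more of the evaluation points. Because $d\ge n-k$ always holds here (a nonzero $f\in\mathcal L(D)$ has exactly $k$ zeros counted with multiplicity), this is equivalent to some nonzero codeword vanishing at exactly $k$ of the points. I will therefore prove that such a codeword exists if and only if there are $k$ distinct points $P_{i_1},\ldots,P_{i_k}\in P$ with $P_{i_1}\oplus\cdots\oplus P_{i_k}=\kappa(D)$.

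For the forward direction, suppose $0\ne f\in\mathcal L(D)$ vanishes at distinct points $P_{i_1},\ldots,P_{i_k}$. Then $\operatorname{div}(f)+D\ge0$ has degree $k$. Since the support of $D$ avoids $P$, the effective divisor $\operatorname{div}(f)+D$ contains each $(P_{i_j})$ with multiplicity at least one. Counting degrees, it must equal $(P_{i_1})+\cdots+(P_{i_k})$. Hence $D$ is linearly equivalent to $\sum_j (P_{i_j})$.

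Apply the isomorphism $\operatorname{Pic}^0(E)\cong E$, which sends the class of $Q-(O)$ to $Q$ and is additive. The class of $D-k(O)$ equals the class of $\sum_j\bigl((P_{i_j})-(O)\bigr)$, and this corresponds to $\sum_j P_{i_j}$ in the group law. Thus $\kappa(D)\in\Sigma_k(P)$.

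For the converse, suppose $\kappa(D)=P_{i_1}+\cdots+P_{i_k}$ with distinct indices. The same identification, read backwards (injectivity of $\operatorname{Pic}^0(E)\to E$), shows $D\sim\sum_j(P_{i_j})$. Choose $f$ with $\operatorname{div}(f)=\sum_j(P_{i_j})-D$. This $f$ is a nonzero element of $\mathcal L(D)$ vanishing at the $k$ distinct points $P_{i_j}$. By injectivity of evaluation for $k<n$, its codeword is nonzero. Its weight is at most $n-k<n-k+1$, so the code is not MDS.

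All the steps are routine once the divisor bookkeeping is set up. The one point needing care is the forward direction. There I must justify that a codeword of weight $\le n-k$ has exactly $k$ zeros among $P$ and that these zeros are simple. Both follow from the degree count: the zero divisor on $P$ is bounded by $\operatorname{div}(f)+D$, which has degree $k$, and $D$ contributes nothing at the points of $P$.

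I also need the $\F_q$-rationality of the linear equivalence, which is where I expect the main obstacle to lie. Since $D$ and the $P_i$ are rational, and $E(\F_q)\cong\operatorname{Pic}^0_{\F_q}(E)$ by Hilbert 90 (or equivalently by Riemann--Roch with $\ell(D)=k$), the function $f$ can be taken over $\F_q$. This completes the plan.
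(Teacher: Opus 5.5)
Your proof is correct and is essentially the standard argument; the paper gives no proof of its own and simply cites Han--Ren's Lemma~3.4, whose content is exactly your chain linking a weight-$(n-k)$ codeword, a nonzero $f\in\mathcal L(D)$ with $\operatorname{div}(f)+D=\sum_j(P_{i_j})$, and the identity $\kappa(D)=P_{i_1}+\cdots+P_{i_k}$ under $\operatorname{Pic}^0(E)\cong E$. One small wording slip: a nonzero $f\in\mathcal L(D)$ need not have exactly $k$ zeros (constants have none). What you actually need, and later use correctly, is that the effective divisor $\operatorname{div}(f)+D$ has degree $k$ and dominates the zeros of $f$ on $P$.
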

This is Han--Ren \cite[Lemma~3.4]{HanRen}.

When $|E(\F_q)|$ is even, Roth and Lempel's theorem \cite[Theorem~3.1 and Remark~3.1]{RothLempel} applies directly once $N>605$, since $E(\F_q)$ has rank at most two \cite[Theorem~3]{Ruck} and hence none of the exceptional cases can occur.

\begin{proof}[Proof of \cref{thm:elliptic}]
Put $G=E(\F_q)$ and $N=|G|$, and represent $C$ as $C_L(P,D)$, with $|P|=n$ and $\deg D=k$. The Hasse--Weil bound \cite[Theorem~5.2.3]{Stichtenoth} gives $N\ge(\sqrt q-1)^2$.

For part~\textup{(a)}, $q>655$ implies $N>605$. If $n>N/3+1$, then \cref{thm:sharp-density} gives $\Sigma_k(P)=G$, contradicting \cref{lem:MDS-criterion}. Hence $n\le N/3+1$.

For part~\textup{(b)}, let $p$ be the least prime divisor of $N$. For fixed $p$, sufficiently large $q$ makes $N$ exceed the threshold $N_0(p)$ in \cref{thm:least-prime}. If $n>N/p+p-2$, then \cref{thm:least-prime} gives $\Sigma_k(P)=G$, again contradicting \cref{lem:MDS-criterion}. Hence $n\le N/p+p-2$.
\end{proof}

\begin{proof}[Proof of \cref{cor:elliptic-half}]
Put $G=E(\F_q)$ and $N=|G|$, and represent $C$ as $C_L(P,D)$, with $|P|=n$ and $\deg D=k$. The Hasse--Weil bound \cite[Theorem~5.2.3]{Stichtenoth} gives $N\ge q+1-2\sqrt q=(\sqrt q-1)^2$. For $q>655$, the right-hand side is greater than $605$, so $N>605$.

Suppose first that $N$ is odd. If $k\in\{3,n-3\}$ and $n>2N/5$, then \cref{prop:three-fold}(a), together with \cref{lem:symmetry} when $k=n-3$, gives $\Sigma_k(P)=G$, contradicting \cref{lem:MDS-criterion}. Hence $n\le2N/5<N/2$. If $4\le k\le n-4$, \cref{thm:elliptic}(a) gives $n\le N/3+1<N/2$. This proves the result for all $3\le k\le n-3$.

Now suppose that $N$ is even and assume for a contradiction that $n>N/2$. Since $E(\F_q)$ has rank at most two \cite[Theorem~3]{Ruck} and $N>605$, $G$ is not one of the exceptional endpoint families in \cref{thm:roth-lempel}. Hence part~\textup{(a)} of that theorem covers $k\in\{3,n-3\}$, and part~\textup{(b)} covers every $4\le k\le n-4$. Thus $\Sigma_k(P)=G$ throughout $3\le k\le n-3$, contradicting \cref{lem:MDS-criterion}. Hence $n\le N/2$.
\end{proof}

\begin{remark}\label{rem:elliptic-sharp}
The half-scale bound is sharp. For every $q>7$, Waterhouse's theorem \cite[Theorem~4.1(1)]{Waterhouse} provides an elliptic curve $E/\F_q$ with $N=|E(\F_q)|$ even, and Han--Ren constructed an MDS elliptic code of length $N/2$; see the proof of Theorem~1 in \cite{HanRenTIT} and \cite[Section~3.4]{HanRen}.

The least-prime bound in \cref{thm:elliptic}(b) is also exact. Fix an odd prime $p$, and let
$M=\prod_{\ell\le p,\ \ell\ {\rm prime}}\ell$. Every positive integer
$N\equiv p\pmod M$ has least prime divisor $p$. Since the Hasse interval
$[q+1-2\sqrt q,\,q+1+2\sqrt q]$ has length $4\sqrt q$, for every sufficiently
large prime $q$ it contains at least two integers congruent to $p$ modulo $M$;
choose one of them, say $N$, with $N\ne q+1$. Then
$\beta=q+1-N$ satisfies $|\beta|\le2\sqrt q$ and $(\beta,q)=1$, so
Waterhouse's theorem \cite[Theorem~4.1(1)]{Waterhouse} gives an elliptic
curve $E/\F_q$ with $|E(\F_q)|=N$. Let $P$ be the index-$p$ construction
preceding \cref{thm:least-prime}. Then $|P|=N/p+p-2$, and one quotient
class is disjoint from $\Sigma_j(P)$ for every $0\le j\le|P|$. Fix a point
$Q$ in this missing quotient class and a degree $1\le j<|P|$. The divisor
$D_0=(j-1)(O)+(Q)$ has degree $j$ and $\kappa(D_0)=Q$. By the strong
approximation theorem \cite[Theorem~1.6.5]{Stichtenoth}, $D_0$ is linearly
equivalent over $\F_q$ to a divisor $D$ whose support is disjoint from $P$.
Then $\kappa(D)=Q\notin\Sigma_j(P)$, so \cref{lem:MDS-criterion} shows that
$C_L(P,D)$ is an MDS elliptic code of length $N/p+p-2$.
\end{remark}

\begingroup
\small\raggedright

\endgroup

\end{document}